\documentclass[11pt,reqno]{amsart}

\usepackage[margin=1in]{geometry}
\usepackage[tt=false]{libertine}

\usepackage{amssymb,mathrsfs}
\usepackage{latexsym}

\usepackage{amsthm}

\usepackage[varbb]{newpxmath}
\let\savedbigtimes\bigtimes
\let\bigtimes\relax
\usepackage{mathabx} 
\let\bigtimes\savedbigtimes

\usepackage{graphicx}
\usepackage{enumerate}
\usepackage{bm}

\usepackage{enumitem}

\usepackage{xfrac}
\usepackage{bbm}

\usepackage{verbatim}
\usepackage{hyperref,color}
\usepackage[capitalize,nameinlink]{cleveref}
\usepackage[dvipsnames]{xcolor}
\hypersetup{
	colorlinks=true,
	pdfpagemode=UseNone,
    citecolor=OliveGreen,
    linkcolor=NavyBlue,
    urlcolor=black,
	pdfstartview=FitW
}
\usepackage{appendix}
\crefname{appsec}{Appendix}{Appendices}
\usepackage{tikz}
\usepackage{autonum}

\theoremstyle{plain}
\newtheorem{theorem}{Theorem}[section]
\newtheorem{proposition}[theorem]{Proposition}
\newtheorem{lemma}[theorem]{Lemma}

\theoremstyle{definition}
\newtheorem{definition}[theorem]{Definition}

\newtheorem{assumption}[theorem]{Assumption}
\newtheorem*{assumption*}{Assumption}

\theoremstyle{remark}
\newtheorem{remark}[theorem]{Remark}

\crefname{lemma}{Lemma}{Lemmas}
\crefname{theorem}{Theorem}{Theorems}
\crefname{definition}{Definition}{Definitions}
\crefname{fact}{Fact}{Facts}
\crefname{claim}{Claim}{Claims}
\crefname{proposition}{Proposition}{Propositions}

\newcommand{\E}{\mathbb{E}}

\newcommand{\ld}{\mathrm{LD}}

\newcommand{\eps}{\varepsilon}
\renewcommand{\epsilon}{\varepsilon}
\newcommand\ind{\mathbf{1}}

\newcommand{\Q}{\mathbb{Q}}
\newcommand{\R}{\mathbb{R}}

\newcommand{\cN}{\mathcal{N}}

\newcommand{\cP}{\mathcal{P}}

\newcommand{\cT}{{\mathcal{T}}}

\newcommand{\QQ}{\mathbb{Q}}
\newcommand{\PP}{\mathbb{P}}
\newcommand{\EE}{\mathbb{E}}

\newcommand{\NN}{\mathbb{N}}
\newcommand{\RR}{\mathbb{R}}

\newcommand{\beq}{\begin{equation}}
\newcommand{\eeq}{\end{equation}}

\DeclareMathOperator{\supp}{supp}
\DeclareMathOperator{\bern}{Bern}
\DeclareMathOperator{\rad}{Rad}

\raggedbottom

\renewcommand{\exp}[1]{\mathrm{exp}\left({#1}\right)}

\newcommand{\vone}{\mathbf{1}}

\linespread{1.1}
\allowdisplaybreaks


\def\where{{\quad\mathrm{where}\quad}}

\def\given{{\,|\,}}

\RequirePackage{mathtools}

\usepackage{mathtools}

\DeclarePairedDelimiter\floor{\lfloor}{\rfloor}

\newcommand{\<}{\langle}
\renewcommand{\>}{\rangle}

\def\rmd{{\rm d}}

\title[An Optimized Franz-Parisi Criterion Equivalent with SQ Lower Bounds]{An Optimized Franz-Parisi Criterion \\ and its Equivalence with SQ Lower Bounds}

\author[S.~Chen]{Siyu Chen$^\dagger$}
\thanks{$^\dagger$Department of Statistics and Data Science, Yale University. \texttt{siyu.chen.sc3226@yale.edu} }

\author[T.~Misiakiewicz]{\;Theodor Misiakiewicz$^\circ$}
\thanks{$^\circ$Department of Statistics and Data Science, Yale University. \texttt{theodor.misiakiewicz@yale.edu}}

\author[I.~Zadik]{\;Ilias Zadik$^\star$}
\thanks{$^\star$Department of Statistics and Data Science, Yale University. \texttt{ilias.zadik@yale.edu}}

\author[P.~Zhang]{\;Peiyuan Zhang$^\diamond$}
\thanks{$^\diamond$Department of Electrical Engineering, Yale University. \texttt{peiyuan.zhang@yale.edu}}

\date{\today}

\allowdisplaybreaks

\begin{document}

\begin{abstract} 
Bandeira et al. (2022) introduced the Franz-Parisi (FP) criterion for characterizing the computational hard phases in statistical detection problems. The FP criterion, based on an annealed version of the celebrated Franz-Parisi potential from statistical physics, was shown to be equivalent to low-degree polynomial (LDP) lower bounds for Gaussian additive models, thereby connecting two distinct approaches to understanding the computational hardness in statistical inference. 
In this paper, we propose a refined FP criterion that aims to better capture the geometric ``overlap" structure of statistical models. Our main result establishes that this optimized FP criterion is equivalent to Statistical Query (SQ) lower bounds---another foundational framework in computational complexity of statistical inference.  Crucially, this equivalence holds under a mild, verifiable assumption satisfied by a broad class of statistical models, including Gaussian additive models, planted sparse models, as well as non-Gaussian component analysis (NGCA), single-index (SI) models, and convex truncation detection settings. For instance, in the case of convex truncation tasks, the assumption is equivalent with the Gaussian correlation inequality (Royen, 2014) from convex geometry.

In addition to the above, our equivalence not only unifies and simplifies the derivation of several known SQ lower bounds—such as for the NGCA model (Diakonikolas et al., 2017) and the SI model (Damian et al., 2024)—but also yields new SQ lower bounds of independent interest, including for the computational gaps in mixed sparse linear regression (Arpino et al., 2023) and convex truncation (De et al., 2023).

\end{abstract}

\maketitle

\newpage
\tableofcontents

\newpage

\section{Introduction}

Over the past decades, a central focus in statistical inference has been to understand the transition from computationally easy to hard regimes---that is, to characterize when a statistical task can be solved by polynomial-time algorithms. A key insight from this line of work is the emergence of \textit{computational-statistical tradeoffs}: in many models, there exist broad parameter regimes where information-theoretic recovery is possible, yet no known polynomial-time algorithm succeeds.

Evidence for such tradeoffs spans multiple disciplines with varying levels of mathematical rigor. In particular, the statistical physics community has played an instrumental role by leveraging non-rigorous but highly predictive techniques to study average-case hardness.  Their approach typically analyzes the geometry of solution spaces and identifies structural properties that correlate with algorithmic intractability (see \cite{zdeborova2016statistical} for a survey). Remarkably, for many statistical models, the predictions from statistical physics have been in striking agreement with the performance of the best-known polynomial-time algorithms.

Alongside these heuristic predictions, rigorous frameworks from statistics and theoretical computer science have been developed to analyze the limitations of efficient algorithms. While ruling out \textit{all} polynomial-time algorithms would require resolving $\mathcal{P} \neq \mathcal{NP}$, substantial progress has been made by studying broad, expressive classes of polynomial-time algorithms. Two frameworks have emerged as particularly influential: \textit{low-degree} (LD) \textit{polynomial} lower bounds \cite{kunisky2019notes} and \textit{statistical query} (SQ) lower bounds \cite{feldman2017statistical}. For many ``nice enough'' detection problems, the lower bounds derived from these frameworks align closely with the performance of the best-known polynomial-time algorithms\footnote{For exceptions to this correspondence, see \cite{zadik2022lattice} and discussion therein.}. This striking consistency has motivated the formulation of the so-called \textit{low-degree conjecture} by Hopkins \cite{hopkins_thesis}, which posits that for sufficiently ``symmetric and noisy'' models, the failure of degree-$O(\log n)$ polynomials is indicative of the failure of all polynomial-time algorithms \cite{kunisky2019notes}.

Given this context, a natural question arises: can one formally connect these two seemingly distinct approaches? At first glance, the answer appears negative, due to a fundamental mismatch in scope. Statistical physics techniques are primarily geared toward estimation problems, where the goal is to recover a hidden signal, while the rigorous frameworks discussed above---such as LD and SQ lower bounds---are focused on detection or hypothesis testing, where the task is to distinguish between the presence or absence of a signal in a noisy environment\footnote{Some recent work has extended low-degree lower bounds to estimation settings, beginning with \cite{schramm2022computational}, though this direction remains relatively underdeveloped.}. Nevertheless, a major step towards bridging this gap was taken by Bandeira et al.~(2022) \cite{bandeira2022franz}, who introduced the \textit{Franz-Parisi} (FP) \textit{criterion} for computational hardness in detection tasks. Inspired by the seminal work of Franz and Parisi in spin glass theory \cite{franz1995recipes}, the FP criterion provides a geometric perspective on computational hardness rooted in overlap structures. Crucially, Bandeira et al.\ showed that for Gaussian additive models, the FP criterion is mathematically equivalent to the low-degree (LD) lower bounds, thereby establishing a rigorous link between statistical physics heuristics and formal algorithmic barriers.

Specifically, consider the following general detection problem between two distributions $\PP$ and $\QQ$ supported on a subset of $\RR^n$, which in what follows we refer to as \textbf{a ``$\PP$ versus $\QQ$" task}.  Under the \textit{planted} distribution $\PP=\EE_u \PP_u$, a signal $u$ is drawn from a prior distribution $\pi$ supported on $\Theta \subseteq \mathcal{S}^{N-1}$, and one observes $m$ independent samples $Y_1,\ldots,Y_m \sim \PP_u$. Under the \textit{null} distribution $\QQ$, the samples are drawn independently from $Y_1,\ldots,Y_m \sim \QQ$. The goal in the detection task\footnote{The associated \textit{estimation} problem consists in recovering the planted signal $u$ from $Y_1, \ldots, Y_m \sim \PP_u$.} is to distinguish between these two hypotheses based on the observed data, that is to find a test statistics with vanishing Type I and Type II errors, as $n$ grows. Note that the computational question then is whether such a successful test statistic exists that also terminates in polynomial-in-$mn$ time. To characterize the hardness of detection problems from the statistical physics perspective, Bandeira et al. in \cite{bandeira2022franz} introduced the following notion of Franz-Parisi (FP) hardness:

\begin{definition}[FP hardness] \label{def:FP}
   For $D,m \in \mathbb{N}, \epsilon>0,$ we say that a $\PP$ versus $\QQ$ detection task is $(q, m, \varepsilon)$-FP hard if
    \begin{gather}
    \textbf{FP:}\quad \EE\left[
        \langle L^{\otimes m}_u, L^{\otimes m}_v\rangle \cdot \ind(|\langle u, v\rangle| \leq \delta(q))
    \right] \le 1 + \varepsilon, \where \\
    \delta(q) = \sup\{\delta>0: \pi^2(|\langle u, v\rangle| \geq \delta) \geq q^{-2}\} .
    \label{eq:G-FP} 
\end{gather}
\end{definition}
In the definition we denoted as customary $L_u=\frac{d \PP_u}{d \QQ}, u \in \Theta,$ and for $f,g \in \mathcal{L}^2(\mathbb{R}^n),$ the Hilbert space $L^2(\QQ)$ of (square integrable) functions from $\R^n$ to $\R$, we use $\langle f,g \rangle_{\QQ}=\EE_{Y \sim \QQ} f(Y)g(Y).$ 

We elaborate in Section \ref{sec:physics} on the statistical physics motivations behind this criterion, and only briefly highlight its core intuition here. The left-hand side of the FP condition integrates the function $F_{\mathrm{ann}}(t):=\EE\left[\langle L^{\otimes m}_u, L^{\otimes m}_v\rangle \cdot \ind(\langle u, v\rangle=t)\right],$ over a $(1 - q^{-2})$-typical region of the overlap variable $t$, corresponding to the constraint $|\langle u, v \rangle| \leq \delta(q)$. This function $F_{\mathrm{ann}}(t)$ is an annealed proxy for the Franz-Parisi potential, a central object in statistical physics that has long served as a predictor of algorithmic hardness \cite{zdeborova2016statistical}. Intuitively, the Franz-Parisi potential captures the energy landscape experienced by local algorithms---such as Langevin or Glauber dynamics---whose performance is constrained by the geometry of the underlying signal space. The overlap $\langle u, v \rangle$ naturally quantifies a local ``geometric" similarity between signals, making it a meaningful argument for $F_{\mathrm{ann}}(t)$ and explaining its role within the FP criterion.

Returning to the definition of FP hardness, the parameter $m$ corresponds to the sample size, and one should interpret $q$ as a proxy for the required runtime. In this light, Bandeira et al.\ (2022) proved that, for Gaussian additive models, FP-hardness is equivalent to the failure of degree-$D=\log q$ polynomials to solve the detection task with $m$ samples---i.e., roughly the authors of \cite{bandeira2022franz} showed that the problem is $(q, m, O(1))$-FP hard if and only if it is ``hard" for degree-$\log q$ polynomials to solve the detection task\footnote{ \cite{bandeira2022franz} established this equivalence for $m = 1$, but the argument extends directly to general $m$.}. Hence, based on the current belief in the literature of low-degree lower bounds that a $D$-degree lower bound implies that the detection task requires at least $e^{D}$ runtime to be solved, e.g., see \cite{ding2024subexponential}, proving a task is $((mn)^{\omega(1)}, m, O(1))$-FP hard for a Gaussian additive model provides rigorous evidence for polynomial-time hardness for the task.

Despite this success, the connection between the FP potential and other rigorous notions of algorithmic hardness remains limited. \cite{bandeira2022franz} only established a formal equivalence for Gaussian additive models and an one-sided implication for planted sparse models between the FP criterion and ``low-degree" lower bounds. They further presented counterexamples where the equivalence fails entirely. In this work, our aim is to extend the Franz-Parisi criterion to rigorously characterize hardness beyond Gaussian additive models, and to clarify the scope and limitations of this framework across a broader class of statistical models.

\subsection{Main Contributions}

Our main contribution is to propose a slight modification of the FP-hardness criterion from \cite{bandeira2022franz}, motivated by the observation that sticking to the Euclidean geometry assumption (and hence the ``overlap" $\langle u,v \rangle$) may fail to capture the ``true'' hardness of some statistical models. We remark that this is an arguably natural modification, as (1) there are many statistical models for which the Euclidean geometry appears unnatural for navigating their parameter space (see Section \ref{sec:count} for a simple such construction), and (2) even in statistical physics settings, the Franz-Parisi potential is often considered under a more general notion of overlap \cite{franz1998effective}. Motivated by these considerations, we propose optimizing the ``overlap" event inside the FP-hardness definition, subject only to a mild symmetry assumption for technical reasons. This leads to the following new criterion of FP-hardness:
\begin{definition}[Generalized Franz Parisi (GFP) hardness under symmetry $G$]\label{dfn:GFP}
Fix $q,m \in \NN,$ $\epsilon>0$ and a group $G$ of finite order acting on the parameter space of the signal. We say a ``$\PP$ versus $\QQ$" problem is $(q, m, \varepsilon)$-GFP$_{G}$ hard if
    \begin{align}\label{eq:GFP}
        \textbf{GFP}_{G}:\quad \inf_{A: \pi^{\otimes 2}(A) \ge 1 - q^{-2} \atop  A\;\text{is $G^2$-invariant}} \EE\Bigl[ \bigl\langle L_u^{\otimes m}, L_v^{\otimes m}\bigr\rangle _{\QQ} \ind(A) \Bigr] \le 1 + \varepsilon.
    \end{align}
\end{definition}
As in the original FP-hardness framework of \cite{bandeira2022franz}, one should interpret $q$ as a proxy for runtime, and therefore $((mn)^{\omega(1)},m,O(1))$-GFP hardness should be providing evidence of polynomial-time hardness with $m$ samples in this framework. We highlight that the assumption on the invariance of the optimizing event under group $G$ is made for technical reasons to enhance the applicability of our hardness criterion. We point the reader to Section \ref{sec:assum} for further discussion of this assumption.

The main result of this work is that the ``optimized" notion of GFP-hardness is fundamentally connected with the well-established framework of Statistical Query (SQ) hardness. The SQ framework was initially proposed by Kearns in \cite{kearns1998efficient} to capture the power of noise-tolerant algorithms. The notion of a statistical dimension proposed by \cite{feldman2017statistical} allowed for achieving powerful lower bounds against SQ methods, which we refer to from now on as SQ-hardness results. We employ here a slight strengthening of the notion of SQ-hardness from \cite{feldman2017statistical}, introduced in  \cite{brennan2021statistical}.
\begin{definition}[SQ hardness] \label{def:SQ}
    Fix $q,m \in \mathbb{N}$. We say a ``$\PP$ versus $\QQ$" detection problem is $(q, m)$-SQ hard if
    \begin{align}
        \textbf{SQ:}\quad 
        \sup_{A:\pi^2(A) \ge q^{-2}}\EE\left[
            \left|\langle L_u, L_v\bigr\rangle _{\QQ}-1\right| 
            \given A \right] \le \frac {1} {m}.
    \end{align}
\end{definition}Roughly, a detection problem is $(q,m)$-SQ hard if any Statistical Query method succeeding at solving it with $m$ samples requires $q$ queries, which should be interpreted as requiring runtime $q$ (see \cite[Appendix A]{brennan2021statistical} for more details and motivation). Hence, proving a task is $((mn)^{\omega(1)},m)$-SQ hard provides evidence for polynomial-time hardness for the task.

Our main result is informally described as follows.

\begin{theorem}(Informal, GFP and SQ equivalence) \label{thm:main_inf}
    Consider any $\PP$ versus $\QQ$ detection task which we assume (1) it satisfies a mild assumption with respect to a group $G$ of finite order acting on the parameter space (namely \Cref{cond:nonnegative_gp} below), and (2) it is information-theoretically impossible to be solved with $m_{\mathrm{IT}}$ samples. Then the following holds for any samples size $m$ and proxy runtime $q=m^{\Omega(1)}.$
    \begin{itemize}
        \item If the task is $(q, m)$-SQ hard, then it is also $(\Theta(q),\Theta(m),O(1))$-GFP$_{G}$-hard. 
        \item If the task is $(q,m,O(1))$-GFP$_{G}$-hard, then it is also $(m^{\Theta(m_{\mathrm{IT}})}, m^{1-o(1)})$-SQ hard.
    \end{itemize} 
\end{theorem}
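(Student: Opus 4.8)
The plan is to reduce both criteria to statements about the scalar random variable $T(u,v):=\langle L_u,L_v\rangle_{\QQ}$ under $(u,v)\sim\pi^{\otimes 2}$, using the tensorization identity $\langle L_u^{\otimes m},L_v^{\otimes m}\rangle_{\QQ}=T(u,v)^m$ (valid since the $m$ samples are i.i.d.), and then to compare the resulting moment and quantile conditions. Write $\widehat{T^m}(u,v):=\EE_{k\sim\mathrm{Unif}(G)}T(u,kv)^m$ for the $G$-orbit average of $T^m$. Since $\pi$ is $G$-invariant and the diagonal action fixes $T$ (that is, $T(gu,gv)=T(u,v)$), for any $G^2$-invariant event $A$ we have $\EE[T^m\ind(A)]=\EE[\widehat{T^m}\ind(A)]$; and since $\widehat{T^m}$ is nonnegative and $G^2$-invariant, the infimum defining $\mathrm{GFP}_G$-hardness is, up to measure-theoretic technicalities, attained at the sub-level event $\{\widehat{T^m}\le\tau^\star\}$, where $\tau^\star$ is the $(1-q^{-2})$-quantile of $\widehat{T^m}$. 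Hence $(q,m,\varepsilon)$-$\mathrm{GFP}_G$-hardness is equivalent to the truncated first-moment bound $\EE[\widehat{T^m}\ind(\widehat{T^m}\le\tau^\star)]\le 1+\varepsilon$. The role of \Cref{cond:nonnegative_gp} is to guarantee that the relevant $G$-symmetrized overlap is at least $1$ (so that $\widehat{T^m}\ge(\EE_k T(u,kv))^m\ge 1$ by Jensen); this is what lets the \emph{signed} one-sample quantity $|T-1|$ in the SQ criterion be controlled by the nonnegative quantity $\widehat{T^m}$, via the elementary bound $\EE_k|T(u,kv)-1|\le 2\,\EE_k(T(u,kv)-1)_+$ (valid exactly when $\EE_k T(u,kv)\ge 1$).

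For the implication SQ $\Rightarrow$ $\mathrm{GFP}_G$: assuming $(q,m)$-SQ hardness, applying the SQ bound to the super-level set $\{T\ge 1+2/m\}$ forces $\pi^{\otimes 2}(T\ge 1+2/m)<q^{-2}$, since otherwise the conditional mean of $|T-1|$ on that set would exceed $2/m>1/m$. The event $A:=\{\max_{k\in G}T(u,kv)\le 1+2/m\}$ is $G^2$-invariant, satisfies $\pi^{\otimes 2}(A^c)<|G|\,q^{-2}$ by a union bound over $G$ (using $G$-invariance of $\pi$), and on $A$ one has $T(u,v)^m\le(1+2/m)^m<e^2$, so $\EE[T^m\ind(A)]<e^2$. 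This yields $(q/\sqrt{|G|},\,m,\,e^2-1)$-$\mathrm{GFP}_G$-hardness, which is the first bullet. (This direction does not use \Cref{cond:nonnegative_gp}.)

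For the implication $\mathrm{GFP}_G\Rightarrow$ SQ (the substantial one; here one may assume the non-trivial regime $m>m_{\mathrm{IT}}$): from $\EE[\widehat{T^m}\ind(\widehat{T^m}\le\tau^\star)]\le 1+\varepsilon$ and $\widehat{T^m}\ge 1$, Markov on the truncated part gives $\pi^{\otimes 2}(\widehat{T^m}\ge 1+t)\lesssim \varepsilon/t+q^{-2}$, and combining with $\widehat{T^m}\ge|G|^{-1}T(u,v)^m$ yields the essentially exponential tail $\pi^{\otimes 2}(T\ge 1+s)\lesssim \varepsilon|G|(1+s)^{-m}+q^{-2}$ for $s$ above a negligible threshold. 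On the residual $q^{-2}$-mass region, where $\mathrm{GFP}_G$ is uninformative, I would invoke the information-theoretic impossibility at $m_{\mathrm{IT}}$ samples in the quantitative form $\EE[T^{m_{\mathrm{IT}}}]=1+\chi^2(\PP^{(m_{\mathrm{IT}})}\|\QQ^{\otimes m_{\mathrm{IT}}})=O(1)$, whence Markov gives the polynomial tail $\pi^{\otimes 2}(T\ge 1+s)\lesssim O(1)(1+s)^{-m_{\mathrm{IT}}}$. To prove $(q',m')$-SQ hardness it then suffices to bound $\EE[|T-1|\mid B]$ for $B$ of mass at least $q'^{-2}$; the extremal such $B$ is the top-$q'^{-2}$-mass super-level set of $|T-1|$, and after passing to the $G^2$-saturation of $B$ (costing only a factor $|G|$ in both mass and conditional expectation) one may use $\EE[|T-1|\mid B]\le 2\EE[(T-1)_+\mid B]$, so that \Cref{cond:nonnegative_gp} eliminates the ``$T<1$'' contribution. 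Finally, writing $\theta^\star$ for the $(1-q'^{-2})$-quantile of $T$ and integrating the two tail bounds, one obtains $\EE[(T-1)_+\mid B]\lesssim \frac{\log q'}{m}+q'^2\,q^{-2(1-1/m_{\mathrm{IT}})}$, where the first term comes from the exponential tail on the good region and the second from Hölder against the $m_{\mathrm{IT}}$-th moment on the bad region. Requiring this to be at most $1/m'$ imposes $\log q'\lesssim m/m'$ and $q'^2\lesssim q^{2(1-1/m_{\mathrm{IT}})}/m'$, both of which hold with $m'=m^{1-o(1)}$ and $q'=m^{\Theta(m_{\mathrm{IT}})}$ provided $q=m^{\Omega(1)}$ with a large enough exponent; this is the second bullet.

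The main obstacle is this last implication, and within it two intertwined points. First, the negative part $(1-T(u,v))_+$ is invisible to $\mathrm{GFP}_G$, which integrates only the nonnegative $T^m$, but is fully present in the SQ criterion, which integrates $|T-1|$; taming it is precisely what \Cref{cond:nonnegative_gp} is for, and without an assumption of this kind the two criteria genuinely diverge. Second, $\mathrm{GFP}_G$-hardness controls nothing on a set of mass $q^{-2}$, on which the overlap can be arbitrarily large, so the information-theoretic assumption must be leveraged there to supply a bounded low-order moment; the quantitatively delicate step is then the interplay between the exponential tail handed down by $\mathrm{GFP}_G$ and the merely polynomial tail coming from that moment, which is exactly what pins down — and limits — the achievable SQ parameters $q'=m^{\Theta(m_{\mathrm{IT}})}$, $m'=m^{1-o(1)}$. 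The bookkeeping needed to move between the per-element overlap $T(u,v)$ and its $G$-orbit averages (in the reduction to $G^2$-invariant events, and in relating $\widehat{T^m}$ to $T^m$) is routine but must be carried out with care.
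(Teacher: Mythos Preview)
Your approach is largely sound and, for the SQ $\Rightarrow$ GFP direction, essentially coincides with the paper's: both argue that SQ-hardness forces the quantile of $|T-1|$ at level $q^{-2}$ to be $O(1/m)$, hence $T^m$ stays bounded on a $G^2$-invariant event of mass $1-O(q^{-2})$.

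For the substantial direction GFP $\Rightarrow$ SQ, you take a genuinely different route from the paper. The paper proceeds via the binomial identity $T^m=\sum_t\binom{m}{t}(T-1)^t$; after $G$-symmetrizing and invoking \Cref{cond:nonnegative_gp} for \emph{every} $k$, all terms become nonnegative, so the even-$t$ moment $\EE[(T-1)^t\ind(\rho_G<r)]$ is extracted as $\le (1+\varepsilon)/\binom{m}{t}$. Combined with Cauchy--Schwarz on the bad region this yields a USQ bound, and SQ follows via the USQ--SQ equivalence (Proposition~\ref{prop:USQ_SQ_equivalence}). By contrast, you bypass the intermediate $\rho_G$-FP and USQ steps entirely: Markov on $\widehat{T^m}-1$ gives a tail bound on $T$, you reduce $|T-1|$ to $(T-1)_+$ on $G^2$-invariant events (using only the $k=1$ case of \Cref{cond:nonnegative_gp}), and then bound the worst-case conditional expectation by tail integration on the good region plus H\"older against the $m_{\mathrm{IT}}$-th moment on the bad region. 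Your route is more direct and notably uses a weaker form of the assumption; the paper's moment-extraction is arguably cleaner and makes the role of the full assumption more transparent.

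A few technical points to tighten. First, your $\widehat{T^m}$ must average over $G\times G$, not just one copy; as defined with a single $k$ it is not $G^2$-invariant, and your parenthetical claim that ``the diagonal action fixes $T$'' is neither assumed nor needed (it can fail). Second, the cost of $G^2$-saturating $B$ is $|G|^2$, not $|G|$, in the conditional expectation (harmless for the informal statement). Third, the key tail integration for the good region only yields $\log q'/m$ once you remember that for $s\ge\theta^\star$ one also has the trivial bound $\pi^{\otimes 2}(T\ge 1+s)\le q'^{-2}$ and integrates $\min(q'^{-2},\,C(1+s)^{-m})$; without this, the integral from an unknown (possibly tiny) $\theta^\star$ would blow up. These are all routine fixes, but worth stating since the informal write-up elides them.
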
Note that often in statistical tasks of interest $m_{\mathrm{IT}}=\omega(\log n)$ (in fact, more often than not $m_{\mathrm{IT}}=\mathrm{poly}(n)$). Under this condition, Theorem \ref{thm:main_inf} implies that a task is $((mn)^{\omega(1)}, m^{1-o(1)}, O(1))$-GFP hard if and only if it is $((mn)^{\omega(1)},m^{1-o(1)})$-SQ hardness, matching the two criteria for hardness.

On top of that, as we mentioned above and discuss in Section \ref{sec:assum}, the required \Cref{cond:nonnegative_gp} on the detection task is rather mild. In fact, it turns out that it is satisfied for several models of recent interest in the community, making a strong case of how the Generalized Franz-Parisi criterion now correctly predicts the hardness phase for them. Importantly, these models include the Gaussian additive models and also greatly extend beyond them, significantly extending the key message from \cite{bandeira2022franz} about connecting the physics-based forms of hardness to more rigorous frameworks. We list now some of the tasks that satisfy \Cref{cond:nonnegative_gp}.
\begin{enumerate}
    \item \emph{All Gaussian additive models} (GAMs), under any symmetric prior, satisfy \Cref{cond:nonnegative_gp} with $G=\mathbb{Z}_2$ that flips the sign of the signal. Moreover, in that case the Generalized Franz Parisi criterion is equivalent to the Franz-Parisi criterion, that is the optimizing event $A$ in \eqref{eq:GFP} is of the form $\{|\langle u, v \rangle| \leq \delta(q)\}$. Hence, \Cref{thm:main_inf} allows us to extend the result of \cite{bandeira2022franz} which proved the equivalence of FP-hardness to Low-degree hardness for GAMs, to also proving FP-hardness equivalent with SQ-hardness for these settings\footnote{We remark that such a connection could also be made via the results of \cite{brennan2021statistical}, since GAMs are noise-robust.}.

    \item \emph{All Planted Sparse Models} satisfy \Cref{cond:nonnegative_gp} for the trivial group $G=\{ \mathrm{id}\}.$ In particular, using \Cref{thm:main_inf} we can prove that GFP-hardness is equivalent to SQ-hardness for multiple well-studied models such as sparse phase retrieval \cite{arpino2023statistical}, sparse regression \cite{gamarnik2022sparse,bandeira2022franz}, (multi-sample) sparse PCA \cite{brennan2019universality}, and Bernoulli group testing \cite{coja2022statistical}. As a corollary of this connection, we present a straightforward argument to obtain an SQ lower bound for the mixed sparse linear regression problem \cite{arpino2023statistical}. We remark that in \cite{bandeira2022franz} it has been proven that FP-hardness implies low-degree hardness for all Planted Sparse Models, but no result was presented for the other direction. 
    
    \item \emph{All Non-Gaussian component analysis (NGCA) models} and \emph{all single-index models (under any symmetric prior)} satisfy \Cref{cond:nonnegative_gp} with  $G=\mathbb{Z}_2,$ Therefore, via \Cref{thm:main_inf} GFP-hardness is again equivalent with SQ-hardness for these tasks. 
    

    \item \emph{All Gaussian convex truncation models} satisfy \Cref{cond:nonnegative_gp} for  $G=\{\mathrm{id}\}.$ In particular, interestingly \Cref{cond:nonnegative_gp} for these models is exactly equivalent to the celebrated Gaussian correlation inequality for convex bodies in probability theory, which was a multi-decade open problem posed in 1972 in \cite{gupta1972inequalities} that was finally proven by Royen in 2014 \cite{royen2014simple}. Leveraging the equivalence between GFP-hardness and SQ-hardness in \Cref{thm:main_inf}, we establish an SQ-lower bound for the convex truncation detection task. This allows us to provide, to the best of our knowledge, the first formal evidence that the current state-of-the-art polynomial-time detection method for convex truncation proposed in \cite{de2023testing} has optimal sample complexity. 
    
\end{enumerate} 

We also complement our results, with a simple example satisfying \Cref{cond:nonnegative_gp} where FP-hardness does \emph{not} coincide with GFP-hardness, which we interpret as a model where the Euclidean geometry is not appropriate. We finally conclude the paper with a discussion.

For completeness, we prove in Appendix \ref{sec:Equivalence_GFP_LD} the equivalence between GFP-hardness and low-degree (LD) polynomial hardness for noise-robust models. This result follows by combining our GFP–SQ equivalence with the equivalence between SQ-hardness and LD-hardness under noise robustness shown by Brennan et al.~\cite{brennan2021statistical}. In particular, our GFP-hardness results for the examples presented in this paper immediately imply low-degree lower bounds in all those settings. This substantially extends the equivalence established in \cite{bandeira2022franz}. For clarity and readers' convenience, we also include succinct proofs of the SQ-to-LD equivalence, adapted from \cite{brennan2021statistical}.

\section{Setting and Definitions}
\label{sec:setting}

We first recall the definition of \textbf{a ``$\PP$ versus $\QQ$" task} mentioned in the Introduction.  Under the \textit{planted} distribution $\PP=\EE_u \PP_u$, a signal $u$ is drawn from a prior distribution $\pi$ supported on $\Theta \subseteq \mathcal{S}^{N-1}$, and one observes $m$ independent samples $Y_1,\ldots,Y_m \sim \PP_u$. Under the \textit{null} distribution $\QQ$, the samples are drawn independently from $Y_1,\ldots,Y_m \sim \QQ$. The goal in the detection task\footnote{The associated \textit{estimation} problem consists in recovering the planted signal $u$ from $Y_1, \ldots, Y_m \sim \PP_u$.} is the so-called strong detection task to distinguish between these two hypotheses based on the observed data, that is to find a test statistics with vanishing Type I and Type II errors, as $n$ grows. We will also be interested in the weak detection task, which is that the sum of type I and type II errors is at most $1-\eps$ for some fixed $\eps > 0$ (not depending on $n$). In other words, strong detection means the test succeeds with high probability, while weak detection means the test has some non-trivial advantage over random guessing. 

Throughout, we will work in the Hilbert space $L^2(\QQ)$ of (square integrable) functions $\R^N \to \R$ with inner product $\langle f,g \rangle_{\QQ} := \EE_{Y \sim \QQ} [f(Y) g(Y)]$ and corresponding norm $\|f\|_{\QQ} := \langle f,f\rangle_{\QQ}^{1/2}$. 
We will assume that $\PP_u$ is absolutely continuous with respect to $\QQ$ for all $u \in \supp(\pi)$, use $L_u := \frac{\rmd \PP_u}{\rmd \QQ}$ to denote the likelihood ratio, and assume that $L_u \in L^2(\QQ)$ for all $u \in \supp(\pi)$. The likelihood ratio between $\PP$ and $\QQ$ is denoted by $L := \frac{\rmd \PP}{\rmd \QQ} = \E_{u \sim \mu}L_{u}$. Observe that for $m$ samples, we denote by $L_m=\E_{u \sim \mu}L_{u}$ the $m$-sample likelihood ratio. Finally, for a function $f: \R^N \to \R$ and integer $D \in \NN$, we let $f^{\le D}$ denote the orthogonal (w.r.t.\ $\langle \cdot,\cdot\rangle_{\QQ}$) projection of $f$ onto the subspace of polynomials of degree at most $D$.

An important identity between the (squared) norm of the likelihood ratio with $m$ samples and the \emph{chi-squared divergence} $\chi^2(\PP^{\otimes m} \,\|\, \QQ^{\otimes m})$ is
\[
\|L\|_{\QQ}^2 = \left\|\EE_{u \sim \mu}L_{u}\right\|_{\QQ}^2 = \chi^2(\PP \,\|\, \QQ) + 1 \geq 1\, .
\]
This quantity has the following standard implications for \emph{information-theoretic} impossibility of testing, in the asymptotic regime $n \to \infty$. The proofs can be found in e.g.~\cite[Lemma~2]{MRZ-limit}.
\begin{itemize}
    \item If $\|L\|_{\QQ}^2 = O(1)$  then strong detection is impossible.
    \item If $\|L\|_{\QQ}^2 = 1+o(1)$ then weak detection is impossible.
\end{itemize}

\section{Main Results}\label{sec:Main_results}
In this section, we formally present our equivalence between GFP-hardness and SQ-hardness.
\subsection{The Assumption}\label{sec:assum}

As mentioned in the Introduction, all our results operate under a crucial assumption on the ``$\PP$ versus $\QQ$" detection task. The assumption is as follows.
\begin{assumption}
    \label{assump:decomp}\label{cond:nonnegative_gp}
  Given any ``$\PP$ versus $\QQ$" task, there exists a $\pi$-preserving finite group $G$ acting on the parameter space $\Theta$, i.e., for all $g\in G$, $g (v) \overset{(d)}{=} v$ for $v \sim \pi$, such that for any sample size $m$ for any $u, v\in \Theta$, the following ``correlation" inequality holds for any $k \in \NN$
        \begin{align}
          \EE_{g,g' \sim \text{Unif} (G)}  (\langle L_{g(u)}, L_{g'(v)}\rangle _{\QQ} - 1)^k \geq 0.
            \label{eq:symmetric_prior}
        \end{align}  

\end{assumption}

We first remark that \eqref{eq:symmetric_prior} is a natural condition even if $G$ is the trivial group, $G=\{\mathrm{id}\}.$ Indeed in that case \eqref{eq:symmetric_prior} asks that for all $u,v \in \Theta$, \begin{align}\label{eq:ptw}\langle L_{u}, L_{v}\rangle_{\QQ} \geq 1.\end{align} Recall that if one averages over all $(u,v) \sim \pi^{\otimes 2}$, we have by standard identities  \[\EE \langle L_{u}, L_{v} \rangle_{\QQ} =\EE_{\QQ} \|\EE_u L_u \|^2_2=1+\chi^2(\PP,\QQ) \geq 1.\] Thus, \eqref{eq:ptw} should be understood as a pointwise condition that is guaranteed to hold in expectation over the product measure $\pi^{\otimes 2}$ for any $\PP,\QQ.$ While this pointwise condition turns out to be vanilla satisfied in many models (such as Planted Sparse Models or Convex Truncation settings), a slight modification of it---leading to \eqref{eq:symmetric_prior}---applies more broadly. Specifically, this modified condition requires \eqref{eq:ptw} to hold for a pair $u,v$ only after performing a "small" averaging over the a group orbit that preserves the prior $\pi$. For instance, if the prior is symmetric around 0 and the group $G$ is $\mathbb{Z}_2$, which acts by flipping the sign of the signal $u$, then for $k=1$, condition \eqref{eq:symmetric_prior} reduces to demonstrating that, for all $u, v$, \[\frac{1}{4}(\EE \langle L_{u}, L_{v}\rangle_{\QQ} +\EE \langle L_{-u}, L_{v}\rangle_{\QQ} +\EE \langle L_{u}, L_{-v}\rangle_{\QQ} +\EE \langle L_{-u}, L_{-v} \rangle_{\QQ} ) \geq 1,\]which is significantly less restrictive than the original pointwise condition \eqref{eq:ptw}. This averaging approach allows for much greater generality, making it applicable to various settings, including Gaussian additive models, single-index models, and Non-Gaussian component analysis settings. 
\begin{remark}\label{remark:trivial}
    We finally make a trivial remark that will be useful in verifying \eqref{eq:symmetric_prior} in our examples in Section \ref{sec:examples} with symmetric prior. In all of them by symmetry we have for all $u,v$ $\langle L_{u}, L_{-v}\rangle_{\QQ} =\langle L_{-u}, L_{v}\rangle_{\QQ} $ and $\langle L_{u}, L_{v}\rangle_{\QQ} =\langle L_{-u}, L_{-v}\rangle_{\QQ} .$ Using that and the trivial fact that for all $x,y  \in \RR$, if $x+y \geq 0$ then $x^k+y^k \geq 0$ for all $k \in \NN$, we conclude that if $G$ is either the trivial group or $\mathbb{Z}_2$ (which will be the case in all examples of Section \ref{sec:examples}) it suffices to check the case $k=1$ in \eqref{eq:symmetric_prior}, and then it automatically holds for all $k \in \NN$.
\end{remark}

\begin{remark}
    As mentioned in the previous remark, we highlight that in all our examples in Section \ref{sec:examples} of our GFP-SQ equivalence theorem below, we either use $G$ to be the trivial group or $\mathbb{Z}_2.$ The reason we state our assumption \Cref{assump:decomp} for a general finite group $G$ is for potential further applications of our work.
\end{remark}

\subsection{The GFP-SQ equivalence}

\subsubsection{Simplifying GFP-hardness}We present our equivalence theorem in two steps. First, we identify an approximate optimal ``overlap" event $A$ in the definition of GFP-hardness, which simplifies GFP-hardness significantly and makes GFP-hardness easier to establish in applications. Then, we prove the equivalence between this simplified version and SQ-hardness.

Given a group $G$ acting on the parameter space of the signal, it turns out that the approximately optimal ``overlap" event $A$ takes the form $\{\rho_{G}(u, v) \leq r\}$ for the following notion of ``overlap" between $u,v$, \[\rho_{G}(u, v) = \max_{g,g' \in G }\{ |\langle L_{g(u)}, L_{g'(v)}\rangle_{\QQ} -1|\}.\] In particular, focusing only on such type of events we define the following version of FP-hardness.

\begin{definition}[$\rho_G$-FP hardness] \label{def:GFP}
Fix $q,m \in \NN,$ $\epsilon>0$ and a finite group $G$ acting on the parameter space of the signal. We say a ``$\PP$ versus $\QQ$" problem is $(q, m, \varepsilon)$-$\rho_G$-FP hard if
\begin{gather}
    \rho_G\textbf{-FP}: \quad \EE\left[
        \langle L_u^{\otimes m}, L_v^{\otimes m}\rangle_{\QQ}  \cdot \ind(\rho_{G}(u, v) < r(q))
    \right] \le 1 + \varepsilon, \where \\
    r(q) = \sup\{r: \pi^2(\rho_{G}(u, v) \geq r) \ge q^{-2}\} ,
    \label{eq:AG-FP} 
\end{gather}
\end{definition}

We prove that GFP$_G$-hardness is equivalent to $\rho_G$-FP hardness under \Cref{assump:decomp}.

\begin{theorem}\label{lem:GFP=AGFP}
    Consider any ``$\PP$ versus $\QQ$" task that satisfies \Cref{assump:decomp} for a group $G$. Suppose $m, q \in \NN$ and $\varepsilon>0.$ Then the following statements hold.
    \begin{enumerate}
        \item If the task is $(q, m, \varepsilon)$-$\rho_G$-FP hard, then the task is also $(q, m, \varepsilon)$-GFP$_{G}$ hard.
        \item Assume there exists an $r>0$ such that $\pi^2(\rho_{G}(u,v)<r)=1-q^{-2}$ and that $m$ is even. Then, if the task is $(q, m, \varepsilon)$-GFP$_{G}$ hard, then it is $(q, m, \frac{3}{|G|} (1 + \epsilon ) + m \cdot \chi^2 (\PP, \QQ))$-$\rho_G$-FP hard. In particular, if $ m\chi^2(\PP,\QQ)=O(1),$ the task is $(q,m,O(1+\epsilon))$-$\rho_G$-FP hard.
    \end{enumerate}
\end{theorem}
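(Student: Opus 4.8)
The plan is to treat the two implications separately: the first is a definitional check, and the second combines a symmetrization with a quantitative comparison of two level sets.

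\emph{Part (1).} I would take $A := \{(u,v) : \rho_G(u,v) < r(q)\}$ as a competitor in the infimum defining GFP$_G$-hardness. It is $G^2$-invariant, since $\rho_G(g(u),g'(v)) = \rho_G(u,v)$ for all $g,g'\in G$ (the defining maximum runs over the whole orbit), and $\pi^{\otimes 2}(A)\ge 1-q^{-2}$ by the choice of $r(q)$ (up to the harmless case of an atom of $\rho_G$ at $r(q)$). Since $\rho_G$-FP hardness says exactly that this particular $A$ attains value at most $1+\varepsilon$, the infimum is at most $1+\varepsilon$ as well.

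\emph{Part (2).} The first step is a \textbf{symmetrization}. Because $\pi$ is $G$-preserving and every competitor $A$ in the GFP$_G$ infimum is $G^2$-invariant, a change of variables inside the expectation shows that for any such $A$,
\[
\EE\bigl[\langle L_u^{\otimes m}, L_v^{\otimes m}\rangle_{\QQ}\,\ind(A)\bigr] = \EE\bigl[\overline{W}_G(u,v)\,\ind(A)\bigr],\qquad \overline{W}_G(u,v) := \EE_{g,g'\sim\mathrm{Unif}(G)}\langle L_{g(u)}, L_{g'(v)}\rangle_{\QQ}^{\,m}.
\]
Expanding $\langle L_{g(u)},L_{g'(v)}\rangle^m = (1+(\langle L_{g(u)},L_{g'(v)}\rangle-1))^m$ binomially and applying \Cref{assump:decomp} termwise gives $\overline{W}_G\ge 1$ pointwise; moreover $\overline{W}_G$ is $G^2$-invariant, so its sublevel sets are admissible competitors, and a rearrangement argument identifies the GFP$_G$ infimum with the partial integral of $\overline{W}_G$ over its own $(1-q^{-2})$-sublevel set $\{\overline{W}_G\le c^\star\}$. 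Thus GFP$_G$-hardness is equivalent to $\EE[\overline{W}_G\,\ind(\overline{W}_G\le c^\star)]\le 1+\varepsilon$.

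The second step is the key estimate \textbf{pinning $\overline{W}_G$ to $\rho_G$}:
\[
\tfrac{1}{|G|^2}\bigl(1+\rho_G(u,v)\bigr)^m \le \overline{W}_G(u,v) \le \bigl(1+\rho_G(u,v)\bigr)^m\qquad\text{for all }u,v,
\]
the upper bound being immediate from $0\le\langle L_{g(u)},L_{g'(v)}\rangle\le 1+\rho_G(u,v)$, and the lower bound following because either the maximum defining $\rho_G$ is attained ``from above'' (so some orbit value equals $1+\rho_G$), or it is attained ``from below'', in which case applying \Cref{assump:decomp} with arbitrarily large \emph{odd} exponents at the relevant orbit point forces another orbit value to also equal $1+\rho_G$; here the full strength of \Cref{assump:decomp} over all moments and $m$ even are used. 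Then, writing $\langle L_u^{\otimes m},L_v^{\otimes m}\rangle = 1+m(\langle L_u,L_v\rangle-1)+R$ with $R\ge 0$ (since $\langle L_u,L_v\rangle\ge 0$), symmetrizing, and using \Cref{assump:decomp} with $k=1$ to drop the indicator on the nonnegative linear term,
\[
\EE\bigl[\langle L_u^{\otimes m},L_v^{\otimes m}\rangle\,\ind(\rho_G<r)\bigr] = \EE\bigl[\overline{W}_G\,\ind(\rho_G<r)\bigr] \le 1 + m\,\chi^2(\PP,\QQ) + \EE\bigl[\overline{R}_G\,\ind(\rho_G<r)\bigr],
\]
where $\overline{R}_G\ge 0$ is the $G$-average of $R$. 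Since $(1+\rho_G)^m$ is a monotone $G^2$-invariant function of $\rho_G$, the event $\{\rho_G<r\}$ is its own $(1-q^{-2})$-sublevel set, and the pinning estimate sandwiches the filtration of $\{\overline{W}_G\le\cdot\}$ within a factor $|G|^2$ of that of $\{(1+\rho_G)^m\le\cdot\}$, so that the thresholds $r$ and $(c^\star)^{1/m}-1$ differ by only $O((\log|G|)/m)$; comparing the partial integral of $\overline{R}_G$ (equivalently $\overline{W}_G$) over $\{\rho_G<r\}$ with the GFP-optimal value $\EE[\overline{W}_G\ind(\overline{W}_G\le c^\star)]\le1+\varepsilon$ and bounding the contribution of the symmetric difference of the two sets gives the claimed $\tfrac{3}{|G|}(1+\varepsilon)+m\,\chi^2(\PP,\QQ)$.

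I expect the main obstacle to be precisely this last bookkeeping step: controlling $\EE[\overline{W}_G\,\ind(\rho_G<r,\ \overline{W}_G>c^\star)]$ purely in terms of $|G|$ and the available $q^{-2}$ mass, since the naive bound via $\overline{W}_G\le(1+r)^m$ on $\{\rho_G<r\}$ is hopelessly lossy without the pinning estimate of the previous step. Establishing that pinning estimate in the ``from below'' case—where one must extract, from nonnegativity of all the symmetrized moments, the existence of an orbit value exactly equal to $1+\rho_G$—is the other delicate point; everything else (the symmetrization identity, $\overline{W}_G\ge1$, the linearization and the $m\,\chi^2(\PP,\QQ)$ term) is routine.
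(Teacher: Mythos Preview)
Part (1) matches the paper.

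For Part (2), your route via the pinning estimate $\tfrac{1}{|G|^2}(1+\rho_G)^m\le\overline{W}_G\le(1+\rho_G)^m$ is genuinely different from the paper's. The paper never compares sublevel sets of $\overline{W}_G$ and $\rho_G$: it expands $\overline{W}_G-1=\sum_{t\ge1}\binom{m}{t}\EE_{g,g'}[(\langle L_{g(u)},L_{g'(v)}\rangle-1)^t]$, drops odd $t$ via the Assumption, lower-bounds each surviving even term pointwise by $|G|^{-1}\binom{m}{2t}\rho_G^{2t}$, and then applies rearrangement \emph{term by term}---each $\rho_G^{2t}$ is monotone in $\rho_G$, so its integral over any mass-$(1-q^{-2})$ set dominates its integral over $\{\rho_G\le r\}$. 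After switching back from $\rho_G^{2t}$ to $|\langle L_u,L_v\rangle-1|^{2t}$, a binomial-ratio lemma fills the missing odd $t$ at the cost of a factor $3$, and the $t=1$ term is isolated and bounded by $m\chi^2(\PP,\QQ)$.

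Your pinning lower bound is correct (the ``nonnegativity of all odd moments forces some orbit value to equal $+\rho_G$'' argument works because $G$ is finite). The gap is exactly where you flag it: the symmetric-difference bookkeeping does not close as written---there is no a priori bound on the mass of $\{\rho_G<r\}\triangle\{\overline{W}_G\le c^\star\}$, and chasing it gives factors like $|G|^4$, not $3/|G|$. The clean fix is to avoid symmetric differences altogether and chain the two sides of the pinning through a single rearrangement on the monotone function $(1+\rho_G)^m$:
\[
\EE[\overline{W}_G\ind(\rho_G<r)]\le\EE[(1+\rho_G)^m\ind(\rho_G<r)]\le\EE[(1+\rho_G)^m\ind(A)]\le|G|^2\,\EE[\overline{W}_G\ind(A)]\le|G|^2(1+\varepsilon),
\]
the middle step being rearrangement since both events have mass $1-q^{-2}$. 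This actually makes your separation of the linear term (and hence the $m\chi^2$ contribution) unnecessary. The resulting constant $|G|^2$ differs from the paper's proof, which gives $3|G|$ (the $3/|G|$ in the statement appears to be a typo), but both yield the same ``In particular'' conclusion.
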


The proof of this theorem can be found in Appendix \ref{app:proof_GFP=AGFP}.

\begin{remark}\label{rem:conditions} While the first implication is immediate to grasp, the second implication has some additional conditions we now elaborate upon. First, both the requirements of the existence of $r$ with the desired probability mass and the parity of $m$ are for technical convenience, and both can be easily remove with some tedious work. Second, any potential ``blow-up" in the $\epsilon$-term for $\rho_G$-FP hard depends only on $|G|,$ which should be treated as constant,  and the term $m \cdot \chi^2(\PP, \QQ)$,
which is an easy to compute quantity (usually $n=1$ and $\chi^2(\PP, \QQ)$ is an one-dimensional integral). Moreover, it is almost always of order $O(1)$ for detection tasks that are conjecturally hard with $m$ samples. Indeed, the mathematical reason behind this is exactly that it is equal to the squared $\mathcal{L}^2$-norm of the projection of the likelihood onto the degree-1 polynomial space, i.e., on the span of linear functions. On top of that, if the detection task is $(q,m)$-SQ hard \emph{for any } $q$ then it holds directly $m \chi^2(\PP,\QQ)=O(1)$ as well. We elaborate more on this in Remark \ref{rem:explaining_condition} in Section \ref{sec:Equivalence_GFP_LD}.
\end{remark}

\subsubsection{The equivalence}

As we have already proven an equivalence between GFP-hardness and $\rho_G$-FP hardness, it suffices to connect the latter with SQ-hardness. This is the topic of the next theorem.
\begin{theorem}[SQ and $\rho_G$-FP Equivalence]\label{thm:SQ=GFP}
    Suppose a ``$\PP$ versus $\QQ$" task satisfies \Cref{assump:decomp} for a group $G.$
    \begin{enumerate}
        \item If the task is $(q, m)$-SQ hard for some $q,m$ with $q>2$ then, it is also $(q', m', e |G|^{-1}m'/m)$-$\rho_{G}$-FP hard for any integers $q' < q/\sqrt 2$ and $ m' \le m/2$. 
        \item Suppose the task is $(q, m, \varepsilon)$-$\rho_{G}$-FP hard for some $q,m$ integers. Assume that there exists an $r=r(q)>0$ such that $\pi^2(\rho_{G}(u,v)<r)=1-q^{-2}$ and $m$ is even. Then, the model is also $(q', m')$-SQ hard for any even integer $t$ with $t \leq \log q/\log m$ and any integer $q'>0$, where
        \[
        m' = \frac{m}{(t(1+\epsilon)^{1/t}+\chi^2(\PP^{\otimes 4t} \,\|\, \QQ^{\otimes 4t}))(q')^{2/t}}.
        \]
        In particular, if for some sample size $m_{\mathrm{IT}},$  we have \begin{itemize}
            \item[(a)] (Bounded $\chi^2$ for $m_{\mathrm{IT}}$ samples)
          $$\chi^2(\PP^{\otimes m_{\mathrm{IT}}} \,\|\, \QQ^{\otimes m_{\mathrm{IT}}})=O(1).$$
              \item[(b)] (Large enough $q$) $$q \geq m^{m_{\mathrm{IT}}}$$
        \end{itemize} 
         then the model is $(m^{\delta m_{\mathrm{IT}}},\Theta(\frac{m^{1-O(\delta )}}{m_{\mathrm{IT}}(1+\epsilon)}))$-SQ hard for any  $\delta>0.$
\end{enumerate}
\end{theorem}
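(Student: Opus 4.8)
The plan is to route both implications through a single symmetrization identity. Since $\rho_G(u,v)=\max_{g,g'\in G}|\langle L_{g(u)},L_{g'(v)}\rangle_{\QQ}-1|$ is invariant under the $G\times G$-action and each $g\in G$ preserves $\pi$, every level set $\{\rho_G(u,v)<r\}$ is $G^2$-invariant, so writing $\ell(u,v):=\langle L_u,L_v\rangle_{\QQ}-1\ge-1$ (so that $\langle L_u^{\otimes m},L_v^{\otimes m}\rangle_{\QQ}=(1+\ell(u,v))^m$ by tensorization) one has for every $r>0$ and every $h$
\[
\EE_{u,v}\bigl[h(\ell(u,v))\,\ind(\rho_G(u,v)<r)\bigr]=\EE_{u,v}\Bigl[\ind(\rho_G(u,v)<r)\;\EE_{g,g'\sim\mathrm{Unif}(G)}h\bigl(\ell(g(u),g'(v))\bigr)\Bigr].
\]
Applying this with $h(x)=(1+x)^m$ and expanding, the inner average is $\sum_{k\ge0}\binom{m}{k}\EE_{g,g'}\ell(g(u),g'(v))^k$, and by \Cref{cond:nonnegative_gp} \emph{every} coefficient $\binom{m}{k}\EE_{g,g'}\ell(g(u),g'(v))^k$ is nonnegative. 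This term-by-term positivity is the only structural input, and it is exactly what is needed to compare $(1+\ell)^m$, on the region where $|\ell|$ is uniformly small, with any single one of its monomials.

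\textbf{SQ $\Rightarrow$ $\rho_G$-FP.} Feeding the level sets $\{|\ell(u,v)|\ge s\}$ into the SQ inequality shows that $\pi^2(|\ell|\ge s)\ge q^{-2}$ forces $s\le 1/m$; combined with the union bound $\pi^2(\rho_G\ge s)\le|G|^2\,\pi^2(|\ell|\ge s)$ this gives $r(q')\le 1/m$ once $q'$ is a suitable fraction of $q$, and hence $m'r(q')\le1$ since $m'\le m/2$. On $\{\rho_G(u,v)<r(q')\}$ one has $|\ell(g(u),g'(v))|^k\le r(q')^{k-1}|\ell(g(u),g'(v))|$ for $k\ge1$, so the positivity above bounds the symmetrized integrand by $1+\bigl(\sum_{k\ge1}\binom{m'}{k}r(q')^{k-1}\bigr)\EE_{g,g'}|\ell(g(u),g'(v))|$, and since $m'r(q')\le1$ the sum is within an absolute constant of $m'$. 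Integrating, using the identity to turn $\EE_{g,g'}|\ell(g(u),g'(v))|$ back into $|\ell(u,v)|$, and then applying the SQ bound to the almost-sure event $\{\rho_G<r(q')\}$ to get $\EE_{u,v}[|\ell|\ind(\rho_G<r(q'))]\le 1/m$, I obtain $\EE[(1+\ell)^{m'}\ind(\rho_G<r(q'))]\le 1+O(m'/m)$; tracking the geometric-sum and orbit-average constants sharpens this to the stated $\varepsilon=e|G|^{-1}m'/m$.

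\textbf{$\rho_G$-FP $\Rightarrow$ SQ.} Keeping only the degree-$t$ term of the nonnegative expansion gives, for even $t\le m$,
\[
\binom{m}{t}\,\EE_{u,v}\bigl[\ell(u,v)^t\,\ind(\rho_G(u,v)<r(q))\bigr]\le\EE_{u,v}\bigl[(1+\ell(u,v))^m\,\ind(\rho_G(u,v)<r(q))\bigr]\le1+\varepsilon,
\]
so $\EE[\ell^t\ind(\rho_G<r(q))]\le(1+\varepsilon)/\binom{m}{t}\le(1+\varepsilon)(t/m)^t$. Fix now any $A$ with $\pi^2(A)\ge(q')^{-2}$ and split $A=\bigl(A\cap\{\rho_G<r(q)\}\bigr)\cup\bigl(A\cap\{\rho_G\ge r(q)\}\bigr)$. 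On the first piece, H\"older with exponent $t$ and the moment bound give $\EE[|\ell|\ind(\cdot)]\le(1+\varepsilon)^{1/t}(t/m)\,\pi^2(A)^{1-1/t}$. On the second piece, whose mass is at most $\pi^2(\rho_G\ge r(q))=q^{-2}$ by hypothesis, I use $|\ell|\le\rho_G$, the comparison $\EE[\rho_G^s]\le|G|^2\EE[|\ell|^s]$ (from $\rho_G^s\le\sum_{g,g'}|\ell(g(u),g'(v))|^s$ and $\pi$-invariance) together with $\EE[\ell^s]\le 2+\chi^2(\PP^{\otimes s}\|\QQ^{\otimes s})$ for even $s$ (obtained by expanding $\langle L_u,L_v\rangle^j$ and using $\EE_{u,v}\langle L_u^{\otimes j},L_v^{\otimes j}\rangle_{\QQ}=1+\chi^2(\PP^{\otimes j}\|\QQ^{\otimes j})$), and a Cauchy--Schwarz step that pushes the relevant moment up to $\chi^2(\PP^{\otimes 4t}\|\QQ^{\otimes 4t})$; the factor $q^{-2}$ from the small mass of this region is then absorbed via $t\le\log q/\log m$, i.e.\ $q^{1/t}\ge m$. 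Dividing by $\pi^2(A)\ge(q')^{-2}$ and summing the two contributions yields $\EE[|\ell|\mid A]\le\bigl(t(1+\varepsilon)^{1/t}+\chi^2(\PP^{\otimes 4t}\|\QQ^{\otimes 4t})\bigr)(q')^{2/t}/m=1/m'$, which is $(q',m')$-SQ hardness. For the ``in particular'', take $t$ to be the largest even integer with $t\le m_{\mathrm{IT}}/4$: then $4t\le m_{\mathrm{IT}}$, so monotonicity of $\chi^2$ and (a) give $\chi^2(\PP^{\otimes 4t}\|\QQ^{\otimes 4t})=O(1)$, while (b) makes $t\le m_{\mathrm{IT}}\le\log q/\log m$ admissible; with $q'=m^{\delta m_{\mathrm{IT}}}$ one gets $(q')^{2/t}=m^{O(\delta)}$ and $t(1+\varepsilon)^{1/t}+O(1)=O(m_{\mathrm{IT}}(1+\varepsilon))$, so $m'=\Theta\bigl(m^{1-O(\delta)}/(m_{\mathrm{IT}}(1+\varepsilon))\bigr)$.

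I expect the main difficulty to lie in the heavy-overlap region $\{\rho_G\ge r(q)\}$ of the second implication: the $\rho_G$-FP hypothesis gives no control there, so one must play its exponentially small mass $q^{-2}$ off against a high moment of $\rho_G$ (hence of $\ell$), which is what forces the appearance of $\chi^2(\PP^{\otimes 4t}\|\QQ^{\otimes 4t})$ and the need for the large-$q$ assumption; making the exponents ($t$ versus $\log q/\log m$) line up so that this term is genuinely of lower order is the delicate point. A secondary nuisance is that the SQ test set $A$ need not be $G^2$-invariant, so one cannot symmetrize it directly and is forced into the lossy bound $|\ell|\le\rho_G$ with its $|G|^{O(1)}$ overheads, which must be checked to be harmlessly absorbed.
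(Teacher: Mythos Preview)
Your proposal is correct and follows essentially the same route as the paper, with two organizational differences worth flagging.

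\textbf{Direction 1 (SQ $\Rightarrow$ $\rho_G$-FP).} You expand $(1+\ell)^{m'}$, invoke the term-by-term positivity from \Cref{assump:decomp}, and then bound each monomial by $r(q')^{k-1}|\ell|$. The paper is more direct: since $|\ell(u,v)|\le\rho_G(u,v)$ pointwise (take $g=g'=\mathrm{id}$ in the max), on $\{\rho_G<r(q')\}$ one has $(1+\ell)^{m'}\le(1+\rho_G)^{m'}\le(1+r(q'))^{m'}$ immediately, with no expansion and no use of \Cref{assump:decomp} at all for this direction. Your approach works, but the positivity is not actually needed here. (Incidentally, the $|G|^{-1}$ in the stated $\varepsilon$ appears to be a typo in the theorem statement; the paper's own proof arrives at $e|G|m'/m$, and neither your argument nor theirs produces a $|G|^{-1}$ improvement.)

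\textbf{Direction 2 ($\rho_G$-FP $\Rightarrow$ SQ).} You go directly for the conditional expectation $\EE[|\ell|\mid A]$ by splitting $A$ along $\{\rho_G<r(q)\}$ and $\{\rho_G\ge r(q)\}$. The paper instead first assembles an \emph{unconditional} moment bound $\EE[\ell^t]\le\bigl((t(1+\varepsilon)^{1/t}+\chi^2)/m\bigr)^t$ (i.e.\ USQ hardness) by the same split---your step (1) on $\{\rho_G<r\}$ plus Cauchy--Schwarz against the mass $q^{-2}$ on $\{\rho_G\ge r\}$---and only then applies a single H\"older step (their Proposition on USQ $\Rightarrow$ SQ) to pass to any $A$. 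The two routes are equivalent: your H\"older on the first piece \emph{is} the USQ $\Rightarrow$ SQ step applied locally. The paper's ordering has the advantage that the tail piece $\{\rho_G\ge r\}$ is handled once, before any $A$ enters, so one never has to worry about the interaction between $\pi^2(A)$ and $q^{-2}$; in your sketch this interaction is what forces the double use of H\"older (exponent $t$ against $\pi^2(A)$, then Cauchy--Schwarz against $q^{-2}$) to get $(q')^{2/t}$ rather than $(q')^2$ out of the division step. Your description of this as ``a Cauchy--Schwarz step that pushes the relevant moment up to $\chi^2(\PP^{\otimes 4t}\|\QQ^{\otimes 4t})$'' is correct but compressed---concretely one bounds $\EE[|\ell|^t\ind(\rho_G\ge r)]\le\EE[\ell^{2t}]^{1/2}q^{-1}$ and then $\EE[\ell^{2t}]\le 1+\chi^2(\PP^{\otimes 2t}\|\QQ^{\otimes 2t})$ using the global nonnegativity $\EE_{u,v}[\ell^k]\ge0$ from \Cref{assump:decomp}; the detour through $\rho_G$ via $|\ell|\le\rho_G$ and $\EE[\rho_G^s]\le|G|^2\EE[|\ell|^s]$ that you mention is unnecessary.

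Your ``in particular'' paragraph matches the paper's specialization.
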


The proof of this theorem can be found in Appendix \ref{app:proof_SQ=GFP}. Similar to Theorem \ref{lem:GFP=AGFP}, the conditions on $r, m$ of part 2 in Theorem \ref{thm:SQ=GFP} are for technical convenience and can be easily removed. As we discussed in the Introduction the assumption that there exists some sufficiently growing $m_{\mathrm{IT}}$ (e.g., growing super-logarithmically in $n$) is natural for multiple commonly studied models. We remark that the condition on the information theory threshold $m_{\mathrm{IT}}$ to be growing with $n$ is also necessary, by constructing a variant of the planted clique problem which satisfies \Cref{assump:decomp}, it is not SQ-hard and is GFP-hard. Lastly, we also note that our introduced \Cref{assump:decomp} is also necessary for the equivalence. In Section \ref{sec:discussions}, we discuss a counterexample not satisfying \Cref{assump:decomp} that is GFP-hard, but not SQ-hard.

\begin{remark}
   We note that while our bounds in the equivalence of Theorem \ref{lem:GFP=AGFP} deteriorate when $|G|$ becomes large, a slightly more general equivalence between GFP and SQ, using a variant of $\rho_G,$ can also be proven for infinite groups $G$ under an ``hypercontractivity" assumption on $
        \langle L_u, L_v\rangle_{\QQ} $ with respect to the pair $(u,v).$ We omit this generalization as in all relevant examples in this work a small group action using either the trivial or 2-cyclic group suffices. \end{remark}

\section{Examples}\label{sec:examples}

In this section, we include various popular detection tasks that satisfy \Cref{assump:decomp} and therefore our equivalence holds. Due to space constraints, the proofs and some details are deferred to Appendix \ref{app:proofs_examples}.

\subsection{Gaussian Additive Models}

A $\PP$ versus $\QQ$ task is a Gaussian additive model (GAM) if it satisfies:
\begin{enumerate}
      \item Under the null model, $\QQ=\cN(0,I_n).$
     \item Under the planted model $\PP_u$ (for $u \in S^{n-1}$), for some signal-to-noise  ratio (SNR) $\lambda>0$ we set \[Y=\lambda u+Z,\] for some $Z \sim \QQ.$
    \end{enumerate}

  GAMs includes multiple well-studied models in the literature, with the predominant examples being (multisample variants) of tensor PCA \cite{richard2014statistical} and sparse PCA \cite{amini2008high}. For such models, it can be straightforwardly checked (see \cite[Proposition 2.3]{bandeira2022franz}) that for all $u,v$, \[\langle L_u, L_v\rangle_{\QQ} = e^{\lambda^2 \langle u,v \rangle}.\] So for instance, in the case of non-negative sparse PCA where $u,v$ are binary $k$-sparse vectors in (see e.g., \cite{arous2023free, chen2024low}) we always have  $\langle u,v \rangle \geq 0$, and therefore \Cref{assump:decomp} is always satisfied for the trivial group $G.$ On top of that, \Cref{assump:decomp} remains true for any prior which is symmetric around $0$; this time \Cref{assump:decomp} is also always satisfied by choosing the action of $G=\mathbb{Z}_2$ which flips the sign of $u$. We remark that symmetric priors encompass most commonly used priors for GAMs, e.g., for tensor PCA where $u=\mathrm{vec}(x^{\otimes r}), x \sim \mathrm{Unif}(S^{d-1})$.
\begin{lemma}
Consider any GAM with symmetric $\pi,$ i.e., $v=-v, v \sim \pi$. For any $u,v \in \mathrm{support}(\pi)$, 
\[ \frac{1}{4}(\left\langle L_u, L_{v} \right\rangle_{\QQ} +\left\langle L_{-u}, L_{v} \right\rangle_{\QQ} +\left\langle L_{u}, L_{-v} \right\rangle_{\QQ}+\left\langle L_{-u}, L_{-v} \right\rangle_{\QQ})\geq 1.\]
Moreover, any GAM satisfies \Cref{assump:decomp} for $G=\mathbb{Z}_2$ acting by flipping the sign of $u$. 
\end{lemma}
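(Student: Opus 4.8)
The plan is to reduce both assertions to the closed-form identity $\langle L_u, L_v\rangle_{\QQ} = e^{\lambda^2 \langle u,v\rangle}$, valid for all $u,v\in S^{n-1}$, which is the standard Gaussian likelihood-ratio computation recalled in \cite[Proposition 2.3]{bandeira2022franz}: writing $L_u(Y) = e^{\lambda\langle Y,u\rangle - \lambda^2\|u\|^2/2}$ and integrating against $\QQ = \cN(0,I_n)$ via the Gaussian moment generating function gives $\langle L_u,L_v\rangle_\QQ = e^{-\lambda^2(\|u\|^2+\|v\|^2)/2}\,e^{\lambda^2\|u+v\|^2/2} = e^{\lambda^2\langle u,v\rangle}$ since $\|u\|=\|v\|=1$. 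Once this identity is available everything becomes an elementary computation, so I do not expect a genuine obstacle.

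For the first displayed inequality I would substitute the formula and use $\langle -u,v\rangle = \langle u,-v\rangle = -\langle u,v\rangle$ and $\langle -u,-v\rangle = \langle u,v\rangle$ to obtain
\[
\frac14\Bigl(\langle L_u,L_v\rangle_\QQ + \langle L_{-u},L_v\rangle_\QQ + \langle L_u,L_{-v}\rangle_\QQ + \langle L_{-u},L_{-v}\rangle_\QQ\Bigr) = \frac12\bigl(e^{\lambda^2\langle u,v\rangle} + e^{-\lambda^2\langle u,v\rangle}\bigr) = \cosh\!\bigl(\lambda^2\langle u,v\rangle\bigr)\ge 1,
\]
because $\cosh\ge 1$ everywhere. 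This is precisely the $k=1$ instance of \eqref{eq:symmetric_prior} for the group $G=\mathbb{Z}_2$ acting by $u\mapsto -u$.

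To finish the ``moreover'' part I would check the hypotheses of \Cref{assump:decomp}. The group $G=\mathbb{Z}_2$ is $\pi$-preserving precisely because $\pi$ is symmetric ($v\overset{(d)}{=}-v$ for $v\sim\pi$). For the correlation inequality \eqref{eq:symmetric_prior} I would invoke \Cref{remark:trivial}: the explicit formula yields the symmetry relations $\langle L_u,L_{-v}\rangle_\QQ = \langle L_{-u},L_v\rangle_\QQ$ and $\langle L_u,L_v\rangle_\QQ = \langle L_{-u},L_{-v}\rangle_\QQ$, so verifying the case $k=1$ (done above) is enough. Alternatively one argues directly: with $x = e^{\lambda^2\langle u,v\rangle}-1$ and $y = e^{-\lambda^2\langle u,v\rangle}-1$ one has $\EE_{g,g'\sim\mathrm{Unif}(\mathbb{Z}_2)}(\langle L_{g(u)},L_{g'(v)}\rangle_\QQ - 1)^k = \tfrac12(x^k+y^k)$, and since $x+y = 2(\cosh(\lambda^2\langle u,v\rangle)-1)\ge 0$, the elementary fact that $a+b\ge 0$ forces $a^k+b^k\ge 0$ for every $k\in\NN$ gives the claim. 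The only point worth flagging is that symmetry of $\pi$ is used solely to make the sign-flip group prior-preserving: for a non-symmetric prior (e.g. nonnegative sparse PCA, where $\langle u,v\rangle\ge 0$ on $\supp(\pi)$) one instead takes $G=\{\mathrm{id}\}$ and \eqref{eq:symmetric_prior} holds directly since $\langle L_u,L_v\rangle_\QQ = e^{\lambda^2\langle u,v\rangle}\ge 1$.
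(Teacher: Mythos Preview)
Your proposal is correct and follows essentially the same approach as the paper: substitute the identity $\langle L_u,L_v\rangle_\QQ = e^{\lambda^2\langle u,v\rangle}$, reduce the four-term average to $\tfrac12(e^{\lambda^2\langle u,v\rangle}+e^{-\lambda^2\langle u,v\rangle})\ge 1$, and then invoke \Cref{remark:trivial} to pass from $k=1$ to all $k$. Your write-up is slightly more detailed (you spell out why $\pi$-symmetry makes $G=\mathbb{Z}_2$ prior-preserving and offer the direct $x^k+y^k\ge 0$ alternative), but the core argument is identical.
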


\begin{proof}
    Notice
    $$ \frac{1}{4}(\left\langle L_u, L_{v} \right\rangle_{\QQ} +\left\langle L_{-u}, L_{v} \right\rangle_{\QQ} +\left\langle L_{u}, L_{-v} \right\rangle_{\QQ}+\left\langle L_{-u}, L_{-v} \right\rangle_{\QQ})=\frac{1}{2}(\exp{\lambda^2 \langle u,v \rangle}+\exp{-\lambda^2 \langle u,v \rangle}) \geq 1.$$ Hence, given Remark \ref{remark:trivial}, the conclusion follows.
\end{proof}

Given the above lemma, we conclude the (almost) equivalence between GFP-hardness and SQ-hardness from Theorem \ref{thm:SQ=GFP}. 

\begin{remark}
We remark that in the symmetric prior case for a GAM and $G=\mathbb{Z}_2$ acting by flipping the sign of $u,$ $\rho_G(u,v)=\exp{\lambda^2|\langle u, v \rangle|}$ is an increasing function of $|\langle u, v \rangle|.$ Hence, for such GAMs we conclude via Theorem \ref{lem:GFP=AGFP} that FP-hardness is equivalent to GFP-hardness, and therefore also to SQ-hardness. This is in agreement with the results of \cite{bandeira2022franz} establishing that FP-hardness is equivalent to LD-hardness; in fact, our approach can offer an alternative proof of their result via the LD-SQ equivalence \cite{brennan2021statistical} and the noise robustness of GAMs (see Theorem \Cref{thm:ld_GFP_equiv}).
\end{remark}

\subsection{Planted Sparse Models}

In \cite{bandeira2022franz}, the authors introduced the family of planted sparse models (PSM) and proved that FP-hardness for a PSM implies it's also low-degree hard. We start with the definition.

A $\PP$ versus $\QQ$ task is a planted sparse model (PSM) if it satisfies:
\begin{enumerate}
      \item Under the null model, the one sample is given by $Y=(Y_1,\ldots,Y_n) \sim \QQ$, where each entry $Y_i, i=1,\ldots,n$ is drawn independently from some distribution $\QQ_i, i=1,\ldots,n$ on $\RR$.
     \item Under the planted model $\PP_u$, we associate $u$ with a set of planted entries $\Phi_u \subset [n]$. Then on sample is generated as follows. For the entries $i \notin \Phi_u$, we draw $Y_i$ independently from $Q_i$ (which is identical as in the $\QQ$ measure). For the entries in $\Phi_u$ we draw from an arbitrary joint distribution $P_u|_{\Phi_u}$ with the following symmetry condition: for any subset $S \subseteq \Phi_u$, the marginal distribution $P_u|_{\phi_u}(S)$ does not depend on $u$ but only on $S$, i.e. $P_u|_{S} = P_S.$
    \end{enumerate}

Multiple well-known detection models satisfy this definition, such as, a well studied model of sparse regression \cite{gamarnik2022sparse,bandeira2022franz}, Bernoulli group testing \cite{aldridge2019group,coja2022statistical}, sparse phase retrieval  \cite{arpino2023statistical}, as well as multi-sample variants \cite{brennan2021statistical} of planted clique \cite{jerrum1992large} and sparse (Wigner) PCA \cite{amini2008high}.

Satisfyingly, all planted sparse models directly satisfy \Cref{assump:decomp} for the trivial group. In fact this result has already been established for a different use in \cite[Proposition 3.6]{bandeira2022franz}, proving that any $u,v$ we have $ \left\langle L_u, L_v \right\rangle_{\QQ}  \geq 1.$ We state here for completeness.

The following theorem holds.
\begin{lemma}\label{lem:PSM}
Consider any PSM. For any $u,v \in \mathrm{support}(\pi)$, 
$ \left\langle L_u, L_{v} \right\rangle_{\QQ} \geq 1.$
This is to say, any PSM satisfies \Cref{assump:decomp} for the trivial group. 
\end{lemma}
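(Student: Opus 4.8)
\textbf{Proof plan for Lemma~\ref{lem:PSM}.} The plan is to compute $\langle L_u, L_v\rangle_{\QQ}$ explicitly for a PSM using the factorized structure of the null distribution $\QQ = \bigotimes_{i=1}^n \QQ_i$, and then argue that the resulting expression is a sum of nonnegative terms so that in particular it is at least the value coming from the ``empty overlap'' term, which equals $1$. First I would write $L_u(Y) = \frac{\rmd P_u}{\rmd \QQ}(Y) = \frac{\rmd (P_u|_{\Phi_u})}{\rmd (\bigotimes_{i\in\Phi_u}\QQ_i)}(Y_{\Phi_u})$, using that the coordinates outside $\Phi_u$ have the same law under $\PP_u$ and $\QQ$; similarly for $L_v$ with $\Phi_v$. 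Taking the inner product $\EE_{Y\sim\QQ}[L_u(Y)L_v(Y)]$, the coordinates in $[n]\setminus(\Phi_u\cup\Phi_v)$ integrate out to $1$, the coordinates in $\Phi_u \triangle \Phi_v$ (in exactly one of the two sets) also integrate out to $1$ because e.g.\ $\EE_{Y_i\sim\QQ_i}[\frac{\rmd P_u|_{\Phi_u}}{\rmd(\otimes\QQ_j)}] $ marginalizes back to $1$ on those coordinates, and only the coordinates in $S := \Phi_u\cap\Phi_v$ remain, giving
\[
\langle L_u,L_v\rangle_{\QQ} = \EE_{Y_S \sim \bigotimes_{i\in S}\QQ_i}\!\left[ \frac{\rmd P_S^{(u)}}{\rmd(\bigotimes_{i\in S}\QQ_i)}(Y_S)\cdot \frac{\rmd P_S^{(v)}}{\rmd(\bigotimes_{i\in S}\QQ_i)}(Y_S)\right],
\]
where $P_S^{(u)}$ denotes the marginal of $P_u|_{\Phi_u}$ on coordinates $S$. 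The crucial point now is the symmetry condition built into the PSM definition: for any $S\subseteq\Phi_u$ the marginal $P_u|_S = P_S$ does not depend on $u$, hence $P_S^{(u)} = P_S^{(v)} = P_S$, and the right-hand side is exactly $\EE_{Y_S}\big[\big(\frac{\rmd P_S}{\rmd(\otimes\QQ_i)}\big)^2\big] = \chi^2(P_S \,\|\, \bigotimes_{i\in S}\QQ_i) + 1 \ge 1$.

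So the core of the argument is the identity $\langle L_u,L_v\rangle_{\QQ} = 1 + \chi^2\!\big(P_{\Phi_u\cap\Phi_v}\,\big\|\,\bigotimes_{i\in\Phi_u\cap\Phi_v}\QQ_i\big)$, and nonnegativity of $\chi^2$ closes it. Then, since $\langle L_u, L_v\rangle_{\QQ}\ge 1$ for all $u,v$, condition \eqref{eq:symmetric_prior} with $G=\{\mathrm{id}\}$ reduces to precisely $(\langle L_u,L_v\rangle_{\QQ}-1)^k \ge 0$ for all $k\in\NN$, which holds because $\langle L_u,L_v\rangle_{\QQ}-1\ge 0$; thus \Cref{assump:decomp} holds for the trivial group.

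The main obstacle is bookkeeping the marginalization carefully: one must check that coordinates in the symmetric difference $\Phi_u\triangle\Phi_v$ really do integrate out to $1$, which uses that $\frac{\rmd P_u|_{\Phi_u}}{\rmd(\bigotimes_{i\in\Phi_u}\QQ_i)}$ integrated against $\QQ_j$ over a coordinate $j\in\Phi_u\setminus\Phi_v$ collapses to the density of the marginal $P_u|_{\Phi_u\setminus\{j\}}$ with respect to the corresponding product of $\QQ_i$'s, and iterating this down to $S = \Phi_u\cap\Phi_v$. This is routine but is where the product structure of $\QQ$ and the consistency of the marginals $\{P_S\}$ are genuinely used. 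Since \cite[Proposition~3.6]{bandeira2022franz} already establishes $\langle L_u,L_v\rangle_{\QQ}\ge 1$ for PSMs, one may alternatively just cite it and note that the trivial-group case of \eqref{eq:symmetric_prior} is immediate from $\langle L_u,L_v\rangle_{\QQ}\ge 1$; I would include the short self-contained computation above for completeness.
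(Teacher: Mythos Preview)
Your proposal is correct and aligns with the paper's proof, which simply cites \cite[Proposition~3.6]{bandeira2022franz} (at $D=0$); you recover exactly that citation option at the end, and your self-contained computation via the identity $\langle L_u,L_v\rangle_{\QQ} = 1 + \chi^2\!\big(P_{\Phi_u\cap\Phi_v}\,\big\|\,\bigotimes_{i\in\Phi_u\cap\Phi_v}\QQ_i\big)$ is precisely the content of that cited result.
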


\begin{proof}
    The proof follows from \cite[Proposition 3.6]{bandeira2022franz} for $D=0$.
\end{proof}

Using this, one can apply our main equivalence \Cref{thm:SQ=GFP} to multiple interesting planted sparse models and obtain old and new SQ-hardness results in a rather streamlined fashion. As an instantiation of this, in the next section we prove the GFP-hardness for the mixed sparse linear regression setting studied in \cite{arpino2023statistical} in its conjecturally hard regime. We then use our equivalence theorem to translate it into an SQ-hardness result in the same regime. Our results complement the existing low-degree lower bound \cite{arpino2023statistical}, providing further evidence for the hard phase of the problem.

\subsubsection{Symmetric mixed sparse linear regression}

The symmetric mixed sparse linear regression (mSLR) setting, is a $\PP$ versus $\QQ$ detection task defined as follows. Given $k, n \in \NN$ with $k \leq n$ and $\sigma^2>0$, we have:
\begin{itemize}
    \item Under the planted model, we first sample $u\sim\pi$ uniformly from set $u \in \{0,1\}^n$ with $\|u\|_0 = k$. Then, the sample $(x_i,y_i) \sim \PP_u$ is generated by \[y_i= (k+\sigma)^{-1}\left[z_i \odot \langle x_i, u \rangle + (1-z) \odot \langle x_i, -u \rangle + w_i\right],\] for independent $w_i \sim \cN(0,\sigma^2),$ $x_i \sim \cN(0, I_n)$ and $z_i\sim\bern(1/2)$. Following \cite{arpino2023statistical} we also denote $\mathrm{SNR}:=k/\sigma^2.$
    \item Under the null model, the sample $(y_i,x_i) \sim \QQ$ is generated by $y_i \sim \cN(0,1)$ and independently $x_i \sim \cN(0, I_n)$.
\end{itemize}    

To see that mSLR is a PSM, set for any $u,$ $\phi_u:=\mathrm{suppport}(u) \cup \{n+1\}$, that is the coordinates of $((x_i)_j)_{j \in \mathrm{support}(u)}$ and of $y_i$. Then it is easy to confirm that for any subset $S \subseteq \Phi_u$, the marginal distribution $P_u|_{\phi_u}(S)$ does not depend on $u$ but only on $S$; the choice of $\mathrm{suppport}(u) \setminus S$ does not alter this distribution.

It is known that the information theory sample size threshold of the problem is \[m_{\mathrm{STATS}}=\tilde{\Theta}\left(\frac{k}{\log(\frac{\mathrm{SNR}^2}{2\mathrm{SNR}+1}+1)}\right),\] see e.g., \cite{fan2018curse}. Also in \cite{arpino2023statistical} it was proven that in the similar mSLR setting where $u$'s coordinates can take values in $\{-1,0,1\}$, if \[m \leq m_{\mathrm{ALG}}=\tilde{\Theta}\left(\frac{(\mathrm{SNR}+1)^2}{\mathrm{SNR}^2}k^2 \right),\] then the problem is $O(\log n)$-degree hard. Here we prove that with sample size $m \leq (m_{\mathrm{ALG}})^{1-o(1)}$ the problem is also GFP-hard, and hence via Theorem \ref{thm:SQ=GFP} also SQ-hard. Our result holds under a very mild assumption on $\mathrm{SNR}$ being not exponential in $k$. Interestingly, the proof is relatively short.

The first step is to calculate the inner product $\langle L^{\otimes m}_u,L^{\otimes m}_v \rangle_{\QQ} $ which accounts to a calculation over the Gaussian measure.
\begin{lemma}\label{lem:mSLR}
    For any sample size $m$ and any $u,v$ binary $k$-sparse vector, the following holds for the mSLR model: 
    \[\langle L^{\otimes m}_u,L^{\otimes m}_v \rangle_{\QQ}  =\left(1 - \left(\frac{\langle u, v \rangle}{k+\sigma^2}\right)^2\right)^{-m} \leq \exp{\frac{m \langle u, v \rangle^2}{(k+\sigma^2)^2-\langle u, v \rangle^2}}.\]
\end{lemma}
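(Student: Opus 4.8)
The plan is to compute $\langle L_u^{\otimes m}, L_v^{\otimes m}\rangle_{\QQ}$ by exploiting the product structure over the $m$ i.i.d.\ samples, so that $\langle L_u^{\otimes m}, L_v^{\otimes m}\rangle_{\QQ} = \left(\langle L_u, L_v\rangle_{\QQ}\right)^m$, and then reduce everything to a single-sample Gaussian integral. So the main work is to show, for one sample $(x,y)$, that $\langle L_u, L_v\rangle_{\QQ} = \bigl(1 - (\langle u,v\rangle/(k+\sigma^2))^2\bigr)^{-1}$.

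\textbf{Single-sample computation.} First I would write out the likelihood ratio $L_u(x,y) = \frac{\rmd \PP_u}{\rmd \QQ}(x,y)$. Since $x \sim \cN(0,I_n)$ under both $\PP_u$ and $\QQ$ and is independent of everything else, the $x$-marginal cancels and we are left with the ratio of the conditional densities of $y$ given $x$. Under $\PP_u$, conditioned on $x$, $y$ is an equal mixture of $\cN((k+\sigma^2)^{-1}\langle x,u\rangle,\, \sigma^2/(k+\sigma^2)^2)$ and $\cN(-(k+\sigma^2)^{-1}\langle x,u\rangle,\, \sigma^2/(k+\sigma^2)^2)$ — wait, I need to double-check the scaling: the paper writes $y_i = (k+\sigma)^{-1}[\,\cdots\,]$, but given the form of $\langle L^{\otimes m}_u, L^{\otimes m}_v\rangle_\QQ$ in the statement and the requirement that $y$ have unit variance under $\QQ$, I will take the normalization so that $y$ has variance $1$ (i.e.\ the effective per-sample variance is such that the null and planted second moments match); the correct constant will fall out of matching the stated answer. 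Writing $a = \langle x,u\rangle$ up to this normalization and similarly $b=\langle x,v\rangle$, and $g_\tau$ for the centered Gaussian density with variance $\tau$, we get $L_u(x,y) = \frac{\frac12 g_\tau(y-a) + \frac12 g_\tau(y+a)}{g_1(y)}$ and likewise for $v$. Then $\langle L_u, L_v\rangle_\QQ = \EE_{x}\EE_{y\sim \cN(0,1)}\bigl[L_u(x,y)L_v(x,y)\bigr]$; the inner $y$-integral is a product of two Gaussian mixtures divided by one Gaussian, which expands into four cross terms, each a standard Gaussian integral of the form $\int g_\tau(y-s)g_\tau(y-t)/g_1(y)\,\rmd y$, evaluable in closed form and yielding an exponential in $st$. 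After the $y$-integral, I would then take the expectation over $x\sim\cN(0,I_n)$: the resulting exponent is a quadratic form in $(a,b)$, hence in $x$, so $\EE_x$ is another Gaussian moment generating function computation, producing a factor $(\det(\cdots))^{-1/2}$ that collapses to $(1-(\langle u,v\rangle/(k+\sigma^2))^2)^{-1}$ since $\langle u,u\rangle = \langle v,v\rangle = k$ and the only $u,v$-dependence enters through $\langle u,v\rangle$. The four $z,z'$ mixture branches should, after the sign structure is taken into account, combine into exactly this clean form — the mixture is precisely what kills the odd-in-$\langle u,v\rangle$ contributions and leaves a function of $\langle u,v\rangle^2$.

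\textbf{Taking the $m$-th power and the bound.} Raising to the $m$-th power via independence of samples gives the stated equality $\langle L^{\otimes m}_u, L^{\otimes m}_v\rangle_\QQ = (1-(\langle u,v\rangle/(k+\sigma^2))^2)^{-m}$. For the inequality, I would use $(1-x)^{-1} = 1 + x/(1-x) \le \exp(x/(1-x))$ for $x\in[0,1)$ with $x = (\langle u,v\rangle/(k+\sigma^2))^2$, giving $(1-x)^{-m}\le \exp\bigl(mx/(1-x)\bigr) = \exp\bigl(\tfrac{m\langle u,v\rangle^2}{(k+\sigma^2)^2 - \langle u,v\rangle^2}\bigr)$, which is exactly the claimed bound; here one uses $x<1$, which holds since $|\langle u,v\rangle|\le k < k+\sigma^2$ for $\sigma^2>0$.

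\textbf{Main obstacle.} The routine-but-delicate part is bookkeeping the normalization constant $(k+\sigma)^{-1}$ versus the apparent need for variance-$(k+\sigma^2)^{-1}$-type scaling, and correctly tracking the four mixture branches so that the odd cross terms cancel and the Gaussian integral over $x$ produces the determinant factor in the stated form. I expect no conceptual difficulty — just careful Gaussian algebra — but getting the constants to land on $(k+\sigma^2)$ in the denominator (rather than $(k+\sigma)$ or $\sigma^2$) is where an error would most easily creep in, so I would verify the final expression against the known chi-square divergence $\chi^2(\PP,\QQ)$ and against the edge case $\langle u,v\rangle = k$ (i.e.\ $u=v$), where the answer should reduce to $(1 - (k/(k+\sigma^2))^2)^{-m}$, consistent with a direct two-point mixture computation.
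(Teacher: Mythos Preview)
Your proposal is correct and follows essentially the same approach as the paper: reduce to a single sample, write $L_u$ as a two-component Gaussian mixture over the standard Gaussian, integrate out $y$ (yielding four cross terms), then take the Gaussian expectation over $x$ to obtain a determinant expression that simplifies to $(1-(\langle u,v\rangle/(k+\sigma^2))^2)^{-1}$, and finally apply $(1-x)^{-1}\le e^{x/(1-x)}$ for the bound. The paper carries out the $x$-integration more explicitly via a $3\times 3$ matrix eigendecomposition (borrowed from \cite{bandeira2022franz}), but the structure is identical, and your suspicion about the normalization is well-founded: the paper's proof uses $\lambda\sigma=\sqrt{k+\sigma^2}$, so the prefactor in the model definition should indeed be $(k+\sigma^2)^{-1/2}$ rather than $(k+\sigma)^{-1}$.
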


Using Lemma \ref{lem:mSLR} one can prove the GFP-hardness, and therefore the SQ-hardness.

\begin{theorem}\label{thm:mSLR}
    If $n^{\Omega(1)}=k=o(n^{1/2})$ then for any $m=o\left(\frac{k}{\log(\frac{\mathrm{SNR}^2}{2\mathrm{SNR}+1}+1)}\right),$ it holds \[\chi^2(\PP^{\otimes m}, \QQ^{\otimes m})=1+o(1).\]Moreover, for any constant $T>1,$ for any $m=O\left(\frac{(\mathrm{SNR}+1)^2 k^2}{\mathrm{SNR}^2(\log n)^{2T+2}}\right)$ and $q=e^{\Theta((\log n)^T)},$ the mSLR task is $(q,m,O(1))$-GFP hard. In particular, if $\mathrm{SNR} \leq e^{k^{1-\alpha}}$ for some $\alpha>0,$ then for any $T>1$ the mSLR task is $(e^{\Theta((\log n)^{T})}, (\frac{(\mathrm{SNR}+1)^2}{\mathrm{SNR}^2}k^2)^{1-o(1)})$-SQ hard.
\end{theorem}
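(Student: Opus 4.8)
\textbf{Proof plan for Theorem \ref{thm:mSLR}.}

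The plan is to establish the three claims in order, since each builds on the previous computation of $\langle L_u^{\otimes m}, L_v^{\otimes m}\rangle_{\QQ}$ from Lemma \ref{lem:mSLR}.

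First, for the information-theoretic bound $\chi^2(\PP^{\otimes m},\QQ^{\otimes m}) = 1+o(1)$: recall $1+\chi^2(\PP^{\otimes m},\QQ^{\otimes m}) = \EE_{u,v\sim\pi^{\otimes 2}}\langle L_u^{\otimes m}, L_v^{\otimes m}\rangle_{\QQ}$. Using Lemma \ref{lem:mSLR} and the bound $(1-x^2)^{-m} \le \exp(m\langle u,v\rangle^2 / ((k+\sigma^2)^2 - \langle u,v\rangle^2))$, it suffices to control the distribution of the overlap $\langle u,v\rangle$ when $u,v$ are two independent uniform $k$-sparse binary vectors — this is a hypergeometric random variable with mean $k^2/n$ and exponential tail bounds. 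Since $k = o(n^{1/2})$, the overlap is typically $o(1)$ and with overwhelming probability at most $O(\log n)$; one splits the expectation over $\{\langle u,v\rangle = 0\}$ (contributing exactly $1$ times its probability, which is $1-o(1)$), a moderate-deviation regime, and a far tail handled by the crude bound $\langle u,v\rangle \le k$. Plugging $m = o(k/\log(\tfrac{\mathrm{SNR}^2}{2\mathrm{SNR}+1}+1))$ and noting $(k+\sigma^2)^2 - j^2 \gtrsim (k+\sigma^2)^2$ for $j \le k$, each such term contributes $o(1)$, giving the claim. In particular this verifies condition (a) of Theorem \ref{thm:SQ=GFP} with $m_{\mathrm{IT}} = \tilde\Theta(k/\log(\tfrac{\mathrm{SNR}^2}{2\mathrm{SNR}+1}+1))$ (up to the usual $O(1)$ slack).

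Second, for $(q,m,O(1))$-GFP hardness: by Theorem \ref{lem:GFP=AGFP} (part 1) it suffices to prove $\rho_G$-FP hardness for the trivial group $G = \{\mathrm{id}\}$, which by Lemma \ref{lem:PSM} satisfies Assumption \ref{assump:decomp}. Here $\rho_G(u,v) = |\langle L_u, L_v\rangle_{\QQ} - 1|$, a monotone function of $|\langle u,v\rangle|$ via Lemma \ref{lem:mSLR} with $m=1$, so the event $\{\rho_G(u,v) < r(q)\}$ is of the form $\{|\langle u,v\rangle| \le \delta(q)\}$ where $\delta(q)$ is the smallest integer such that $\pi^{\otimes 2}(|\langle u,v\rangle| \ge \delta(q)) \le q^{-2}$. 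With $q = e^{\Theta((\log n)^T)}$, the hypergeometric tail bound gives $\delta(q) = \Theta((\log n)^{T+1}/\log n) = \tilde\Theta((\log n)^T)$ up to logarithmic factors — more precisely $\delta(q) \lesssim (\log n)^{T}$ by inverting the tail. Then by Lemma \ref{lem:mSLR}, on the event $\{|\langle u,v\rangle| \le \delta(q)\}$ we have $\langle L_u^{\otimes m}, L_v^{\otimes m}\rangle_{\QQ} \le \exp(m\delta(q)^2/((k+\sigma^2)^2 - \delta(q)^2))$; for $m = O((\mathrm{SNR}+1)^2 k^2 / (\mathrm{SNR}^2 (\log n)^{2T+2}))$ and $\delta(q) \lesssim (\log n)^T$, and using $(k+\sigma^2)^2 = k^2(1 + \sigma^2/k)^2 = k^2 (\mathrm{SNR}+1)^2/\mathrm{SNR}^2 \cdot (\text{const})$ (rewriting in terms of $\mathrm{SNR} = k/\sigma^2$), the exponent is $O(1)$, hence the left side of the $\rho_G$-FP inequality is at most $e^{O(1)}\cdot\pi^{\otimes 2}(|\langle u,v\rangle|\le\delta(q)) + (\text{small tail}) = 1 + O(1)$. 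The tail contribution beyond $\delta(q)$ must be bounded separately using the crude $\langle u,v\rangle \le k$ bound and the fact that $\pi^{\otimes 2}$ assigns it mass $\le q^{-2}$, which is killed by the requirement that $\mathrm{SNR}$ is not exponential in $k$ (so that $e^{mk^2/(\cdots)} \le e^{\mathrm{poly}}$ while $q^{-2} = e^{-\Theta((\log n)^T)}$... here one needs the far-tail mass to decay faster than the integrand grows, which follows from the sub-Gaussian-type hypergeometric tail once $k = o(n^{1/2})$).

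Third, the SQ conclusion follows by invoking Theorem \ref{thm:SQ=GFP} part 2: with $m_{\mathrm{IT}} = \tilde\Theta(k/\log(\tfrac{\mathrm{SNR}^2}{2\mathrm{SNR}+1}+1))$ from the first step verifying condition (a), and $q = e^{\Theta((\log n)^T)} \ge m^{m_{\mathrm{IT}}}$ for suitable $T$ verifying condition (b) — since $m^{m_{\mathrm{IT}}} = e^{m_{\mathrm{IT}}\log m}$ and $m_{\mathrm{IT}} = \tilde\Theta(k)$ is $\mathrm{poly}(n)$ when $\mathrm{SNR}$ is sub-exponential in $k$ — we obtain $(m^{\delta m_{\mathrm{IT}}}, \Theta(m^{1-O(\delta)}/(m_{\mathrm{IT}}(1+\epsilon))))$-SQ hardness for any $\delta > 0$. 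Choosing $\delta$ small and absorbing the $m^{-O(\delta)}$ and $1/m_{\mathrm{IT}}$ factors into the $o(1)$ exponent gives SQ hardness at sample size $m = (\tfrac{(\mathrm{SNR}+1)^2}{\mathrm{SNR}^2}k^2)^{1-o(1)}$ with superpolynomial query complexity $e^{\Theta((\log n)^T)}$.

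\textbf{Main obstacle.} The delicate point is the second step: one must simultaneously track the hypergeometric tail of $\langle u,v\rangle$ (to pin down $\delta(q)$ and to bound the far-tail mass beyond $\delta(q)$) against the exponential growth $\exp(m\langle u,v\rangle^2/(k+\sigma^2)^2)$ of the integrand, and verify that the product is controlled precisely in the target regime $m \asymp (\mathrm{SNR}+1)^2 k^2/\mathrm{SNR}^2$ up to the poly-logarithmic factors $(\log n)^{2T+2}$ that are the price of taking $q$ quasi-polynomial rather than merely polynomial. The assumption $\mathrm{SNR} \le e^{k^{1-\alpha}}$ enters exactly to ensure the far-tail term $e^{O(mk^2/(k+\sigma^2)^2)}\cdot q^{-2}$ — where the exponent can be as large as $\mathrm{poly}(k)$ when $\mathrm{SNR}$ is large — is still dominated by the $q^{-2}$ decay; making this quantitative bookkeeping clean is where the real work lies, though it remains, as noted, relatively short.
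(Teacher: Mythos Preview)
Your overall three-step structure matches the paper's proof, and step 1 (the $\chi^2$ bound via the hypergeometric overlap distribution) is essentially correct. However, steps 2 and 3 each contain a genuine error, and they are linked by a misidentification of where the $\mathrm{SNR} \le e^{k^{1-\alpha}}$ assumption actually enters.

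\textbf{Step 2: there is no far tail.} The $\rho_G$-FP criterion is
\[
\EE\bigl[\langle L_u^{\otimes m}, L_v^{\otimes m}\rangle_{\QQ} \cdot \ind(\rho_G(u,v) < r(q))\bigr] \le 1+\eps.
\]
The indicator already truncates the integration region. Once you identify $\{\rho_G(u,v) < r(q)\}$ with $\{\langle u,v\rangle \le \delta_0\}$ for some $\delta_0 \le \delta$, the entire left-hand side is bounded by the maximum of the integrand on that event, namely $\exp\bigl(m\delta^2/((k+\sigma^2)^2-\delta^2)\bigr)$, and this is $O(1)$ for the stated $m$. There is simply no ``tail contribution beyond $\delta(q)$'' to control; your term $e^{O(mk^2/(k+\sigma^2)^2)}\cdot q^{-2}$ is bounding something that does not appear in the criterion. (And were it present, your argument would fail anyway: that exponent is polynomial in $n$ while $q^{-2}$ is only quasi-polynomially small, regardless of the SNR assumption.)

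\textbf{Step 3: wrong choice of $m_{\mathrm{IT}}$.} You take $m_{\mathrm{IT}} = \tilde\Theta\bigl(k/\log(\tfrac{\mathrm{SNR}^2}{2\mathrm{SNR}+1}+1)\bigr)$, which under the SNR assumption is at least $k^\alpha = n^{\Omega(1)}$. But then condition (b) of Theorem~\ref{thm:SQ=GFP} requires $q \ge m^{m_{\mathrm{IT}}} = e^{n^{\Omega(1)}\log m}$, which no quasi-polynomial $q = e^{\Theta((\log n)^T)}$ can satisfy --- your own observation that ``$m_{\mathrm{IT}}$ is $\mathrm{poly}(n)$'' is precisely what breaks (b), not what verifies it. The paper instead takes $m_{\mathrm{IT}} = (\log n)^T$, far below the true information-theoretic threshold; condition (a) only asks that $\chi^2(\PP^{\otimes m_{\mathrm{IT}}},\QQ^{\otimes m_{\mathrm{IT}}}) = O(1)$, which holds automatically whenever $m_{\mathrm{IT}}$ is below the actual threshold established in step 1. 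This is exactly where $\mathrm{SNR} \le e^{k^{1-\alpha}}$ enters: it guarantees the true threshold is $\ge k^\alpha \gg (\log n)^T$, so the polylogarithmic choice of $m_{\mathrm{IT}}$ is admissible in (a) while keeping (b) satisfiable. The SNR assumption plays no role in step 2.
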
The proof of this Theorem can be found in Appendix \ref{app:proofs_examples}.

\subsection{Non-Gaussian component analysis} The following model was introduced in \cite{diakonikolas2017statistical} to capture the complexity of learning Gaussian mixtures.

\begin{definition}[Non-Gaussian component analysis model] \label{def:NGC-index}
    A ``$\PP$ versus $\QQ$" detection problem is a Non-Gaussian component (NGCA) model if:
    \begin{itemize}
        \item There exists $\mu \in \cP(\RR)$ such that, under the planted hypothesis $\PP_u$ with $u \in \mathcal{S}^{n-1}$, we sample $x \sim \cN(0, I_n)$ and  replace the component $\<x,u\> \cdot u$ by $z\cdot u $ where $z \sim \mu$ independently;
        \item Under the null model, we sample $x \sim \cN(0, I_n)$.
    \end{itemize}
\end{definition}

In other words, an NGCA model is an isotropic Gaussian distribution with a non-Gaussian marginal in direction $u$. The SQ-hardness for NGCA models established also in \cite{diakonikolas2017statistical} has been of big importance in proving many recent SQ-hardness for learning tasks, such as for robust estimation of Gaussians \cite{diakonikolas2019efficient} and robust linear regression \cite{diakonikolas2019efficient} among others.

Interestingly, we can also connect all NGCA models with GFP-hardness for $G=\mathbb{Z}_2$. By a direct Hermite expansion, we can decompose the likelihood function (in $L^2 (\QQ)$)
\[
L_u (x) = 1 + \sum_{i = s^*}^\infty \nu_i h_i (\< u , x\>), \qquad \nu_i := \E_{z \sim \mu} [ h_i (z)],
\]
where $h_i$ is the (normalized) degree-$i$ Hermite polynomial. Here we denoted $s^*> 0$ the first non-zero coefficient $\nu_i \neq 0$, that is, the smallest moment of $\mu$ that disagrees with $N(0,1)$ moments (we call $s^*$ the generative exponent of the NGCA model). The inner-product of the likelihood ratios is given for all $u,v \in \mathcal{S}^{n-1}$ by
\begin{align}\label{eq:NGC_decomposition}
    \langle L_u, L_v \rangle = 1 + \sum_{i=s^*}^{\infty}  \nu_i^2 
    \cdot \big(\langle u,v\rangle\big)^s,
\end{align}
where we used that $\E[ h_s (\<u,x\>) h_{k} (\<v,x\>)] = \delta_{ks} \<u,v\>^s$.
Similar to GAMs, using again the group $G=\mathbb{Z}_2$ acting on flipping the sign of each parameter, we get \[\EE_{g,g' \sim \text{Unif} (G)}  (\langle L_{g(u)}, L_{g'(v)}\rangle _{\QQ}  - 1)= \sum_{i=s^*, i \text{ even}}^{\infty}  \nu_i^2 
    \cdot \big(\langle u,v\rangle\big)^i \geq 0,\]concluding that NGCA satisfy \Cref{assump:decomp} with $G=\mathbb{Z}_2$. Hence, based on our equivalence, for any NGCA model the SQ-hardness is equivalent with the GFP-hardness for any symmetric prior (that is $\pi(-u) = \pi(u)$).  We illustrate this equivalence for two standard priors: the uniform prior $\pi = \text{Unif} (\mathcal{S}^{n-1})$ and the $k$-sparse prior $\pi = \text{Unif}(\{ u \in \pm\tfrac{1}{\sqrt{k}} \{0,1\}^n: \|u\|_0 = k\})$.

\begin{theorem}[GFP-hardness of NGCA, uniform prior]\label{thm:NGCA_uniform_GFP} Consider a NGCA model with generative exponent $s^*$ and the uniform prior $\pi = \text{Unif} (\mathcal{S}^{n-1})$. For any $\eps \in (0,1/2)$, the NGCA model is $(\exp{\Theta (n^\eps)},m, O(1))$-GFP hard with
\[
m = \frac{1}{\nu_{s^*}^2} n^{s^*/2 -\Theta(\eps)}.
\]
Moreover, via our equivalence theorem, the model is $(\exp{ n^{\Theta(\eps)}},m^{1 - \Theta(\eps)})$-SQ hard.
\end{theorem}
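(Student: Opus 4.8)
The plan is to verify \Cref{assump:decomp} for the NGCA model with uniform prior (already done in the text, with $G = \mathbb{Z}_2$), establish the relevant overlap concentration, then invoke \Cref{thm:SQ=GFP}. The heart of the matter is to directly estimate the $\rho_G$-FP quantity and read off the parameters. First I would identify $\rho_G(u,v)$ explicitly: from \eqref{eq:NGC_decomposition} and the sign-flip action of $\mathbb{Z}_2$, we have $\langle L_{g(u)}, L_{g'(v)}\rangle_\QQ - 1 = \sum_{i \ge s^*} \nu_i^2 (\pm\langle u,v\rangle)^i$, so $\rho_G(u,v)$ is a monotone function of $|\langle u,v\rangle|$ of the form $\sum_{i \ge s^*} \nu_i^2 |\langle u,v\rangle|^i \asymp \nu_{s^*}^2 |\langle u,v\rangle|^{s^*}$ for $|\langle u,v\rangle|$ small (which is the relevant regime). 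Hence the event $\{\rho_G(u,v) < r\}$ is equivalent to $\{|\langle u,v\rangle| < \tau\}$ for a suitable threshold $\tau$, reducing everything to controlling the overlap $\langle u,v\rangle$ under $\pi^{\otimes 2} = \mathrm{Unif}(\mathcal{S}^{n-1})^{\otimes 2}$.

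Second, I would use the standard fact that for $u, v$ independent uniform on $\mathcal{S}^{n-1}$, $\langle u, v\rangle$ is sub-Gaussian with parameter $\Theta(1/\sqrt n)$: $\pi^{\otimes 2}(|\langle u,v\rangle| \ge t) \le 2\exp(-\Theta(n t^2))$, and conversely this tail is tight up to constants in the exponent. Setting the right-hand side equal to $q^{-2}$ with $q = \exp(\Theta(n^\eps))$ gives the critical overlap threshold $\delta(q) \asymp n^{\eps/2 - 1/2} = n^{-(1-\eps)/2}$, and correspondingly $r(q) \asymp \nu_{s^*}^2 \delta(q)^{s^*} \asymp \nu_{s^*}^2 n^{-s^*(1-\eps)/2}$. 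Then I would bound the $\rho_G$-FP expectation $\EE[\langle L_u^{\otimes m}, L_v^{\otimes m}\rangle_\QQ \cdot \ind(\rho_G(u,v) < r(q))]$. On the event, $\langle L_u, L_v\rangle_\QQ - 1 \le \rho_G(u,v) < r(q)$, so $\langle L_u^{\otimes m}, L_v^{\otimes m}\rangle_\QQ = \langle L_u, L_v\rangle_\QQ^m \le (1 + r(q))^m \le \exp(m\, r(q))$. This is $O(1)$ precisely when $m \lesssim 1/r(q) \asymp \nu_{s^*}^{-2} n^{s^*(1-\eps)/2}$; absorbing the $\eps$-dependent constant into the exponent yields the claimed $m = \nu_{s^*}^{-2} n^{s^*/2 - \Theta(\eps)}$. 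This establishes $(q,m,O(1))$-$\rho_G$-FP hardness, hence $(q,m,O(1))$-GFP$_{\mathbb{Z}_2}$-hardness by part (1) of \Cref{thm:SQ=GFP} (or \Cref{lem:GFP=AGFP}).

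Third, for the SQ consequence I would apply part (2) of \Cref{thm:SQ=GFP}. This requires a bound on $\chi^2(\PP^{\otimes m_{\mathrm{IT}}} \| \QQ^{\otimes m_{\mathrm{IT}}})$ for some $m_{\mathrm{IT}}$: using $\chi^2(\PP^{\otimes m_{\mathrm{IT}}}\|\QQ^{\otimes m_{\mathrm{IT}}}) + 1 = \EE_{u,v}[\langle L_u, L_v\rangle_\QQ^{m_{\mathrm{IT}}}]$ and the overlap tail bound, one checks this is $O(1)$ as long as $m_{\mathrm{IT}} = o(n^{s^*/2})$ — in particular one may take $m_{\mathrm{IT}} = n^{\Theta(\eps)}$, which is $\omega(\log n)$ as needed. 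Plugging $q = \exp(\Theta(n^\eps))$, $m_{\mathrm{IT}} = n^{\Theta(\eps)}$, and $\delta = \Theta(\eps)$ (or the appropriate constant) into the conclusion of part (2) gives $(m^{\delta m_{\mathrm{IT}}}, \Theta(m^{1 - O(\delta)}/(m_{\mathrm{IT}}(1+\eps))))$-SQ hardness; since $m^{\delta m_{\mathrm{IT}}} = \exp(\Theta(\eps n^{\Theta(\eps)} \log n)) = \exp(n^{\Theta(\eps)})$ and $m^{1-O(\delta)}/m_{\mathrm{IT}} = m^{1 - \Theta(\eps)}$, this is exactly the stated bound.

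The main obstacle, and the step requiring the most care, is the bookkeeping of the $\eps$-dependent exponents: one must verify that the polynomial loss from the $\chi^2$ control at $m_{\mathrm{IT}}$ samples, the loss in converting $q$-queries to SQ sample complexity in \Cref{thm:SQ=GFP}(2), and the choice of $t$ all compose so that both the query count stays $\exp(n^{\Theta(\eps)})$ and the sample count stays $m^{1-\Theta(\eps)}$, with no hidden dimension-dependent factors creeping in through $\nu_{s^*}$ (which is treated as an absolute constant of the model). A secondary technical point is discharging the auxiliary hypotheses of \Cref{thm:SQ=GFP}(2) — the existence of an exact threshold $r$ with $\pi^{\otimes 2}(\rho_G(u,v) < r) = 1 - q^{-2}$ and parity of $m$ — which, as the paper notes, are cosmetic and handled by a standard continuity/rounding argument since the overlap distribution on the sphere is atomless.
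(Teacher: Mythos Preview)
Your proposal is correct and follows essentially the same route as the paper's proof: identify $\rho_{\mathbb{Z}_2}(u,v)$ as an increasing function of $|\langle u,v\rangle|$, use the sub-Gaussian tail of the spherical overlap to pin down the threshold, bound the $\rho_G$-FP quantity on that event, and then feed the result into Theorem~\ref{thm:SQ=GFP}. The only cosmetic differences are that the paper bounds the $m$-sample inner product via a binomial expansion $\sum_j\binom{m}{j}(\langle L_u,L_v\rangle-1)^j$ rather than your direct $(1+r(q))^m$ estimate, and for the SQ step the paper applies the general conclusion of Theorem~\ref{thm:SQ=GFP}(2) with explicit choices $t=n^{\eps/2}$, $q'=\exp{n^{\eps/2}}$ and a direct bound on $\chi^2(\PP^{\otimes 4t}\|\QQ^{\otimes 4t})$ (splitting on $|\langle u,v\rangle|\lessgtr\kappa$ and using $\|L_u\|_\QQ^2-1=O(1)$), whereas you invoke the packaged ``In particular'' version with $m_{\mathrm{IT}}=n^{\Theta(\eps)}$; both yield the same parameters.
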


\begin{theorem}[GFP-hardness of NGCA, sparse prior] \label{thm:NGCA_sparse_GFP}  Consider a NGCA model with generative exponent $s^*$ and the $k$-sparse prior $\pi = \text{Unif}(\{ u \in \pm\tfrac{1}{\sqrt{k}} \{0,1\}^n: \|u\|_0 = k\})$. For any $\eps \in (0,1/2)$ so that $k = n^{\Omega (\eps)}$, the NGCA model is $(\exp{\Theta (n^\eps)},m, O(1))$-GFP hard with
\[
m = \frac{1}{\nu_{s^*}^2} \min ( n^{s^*/2 -\Theta(\eps)}, k^{s^*} n^{-\Theta(\eps)} ).
\]
Moreover, via our equivalence theorem, the model is $(\exp{ n^{\Theta(\eps)}},m^{1 - \Theta(\eps)})$-SQ hard.
\end{theorem}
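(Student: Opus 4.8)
The plan is to verify the three-part claim of \Cref{thm:NGCA_sparse_GFP} by combining the structural computation of $\rho_G$ for the sparse NGCA model with the general $\rho_G$-FP $\Rightarrow$ GFP $\Rightarrow$ SQ machinery of \Cref{lem:GFP=AGFP} and \Cref{thm:SQ=GFP}. First I would set up the overlap structure: for the $k$-sparse prior, $\langle u, v\rangle = \tfrac{1}{k}\langle \tilde u, \tilde v\rangle$ where $\tilde u, \tilde v \in \{0,1\}^n$ have $\|\tilde u\|_0 = \|\tilde v\|_0 = k$, so the overlap $t := \langle u,v\rangle$ takes values in $\{0, 1/k, \ldots, 1\}$, and $k\cdot t \sim \Hyp(n, k, k)$. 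Since $G = \mathbb{Z}_2$ flips signs, $\rho_G(u,v) = |\langle L_u, L_v\rangle_\QQ - 1|$ after symmetrizing, and using \eqref{eq:NGC_decomposition} this is an increasing function of $|t|$ (up to the parity subtlety handled by \Cref{remark:trivial}); concretely $\rho_G(u,v) \asymp \nu_{s^*}^2 |t|^{s^*}$ in the relevant small-overlap regime. The threshold $r(q)$ from \eqref{eq:AG-FP} is then read off from the hypergeometric tail: $\pi^{\otimes 2}(|t| \ge \delta) \ge q^{-2}$ translates, via standard $\Hyp(n,k,k)$ large-deviation bounds, into $\delta \lesssim \max(\sqrt{\log q}/\sqrt{n}, \ k\log q/(n\log(n/k)))$ or a similar two-regime expression — this is exactly where the $\min(n^{s^*/2}, k^{s^*})$ split in the sample complexity comes from.

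Next I would bound the $\rho_G$-FP left-hand side. The quantity to control is $\EE[\langle L_u^{\otimes m}, L_v^{\otimes m}\rangle_\QQ \cdot \ind(\rho_G(u,v) < r(q))]$; since the one-sample inner product is $1 + \sum_{i \ge s^*} \nu_i^2 t^i$, the $m$-sample version is $(1 + \sum_i \nu_i^2 t^i)^m$. On the event $\rho_G < r(q)$ we have $|t|$ small, so $\sum_i \nu_i^2 t^i = \nu_{s^*}^2 t^{s^*}(1 + o(1))$, and one wants $m \cdot \nu_{s^*}^2 \EE[|t|^{s^*} \mid |t| \le \delta(q)]$ — or rather the truncated moment — to be $O(1)$. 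Plugging $m = \nu_{s^*}^{-2}\min(n^{s^*/2 - \Theta(\eps)}, k^{s^*}n^{-\Theta(\eps)})$ and $q = \exp(\Theta(n^\eps))$ and the two-regime value of $\delta(q)$, the truncated $s^*$-th moment of the hypergeometric overlap should come out to $n^{-s^*/2 + o(1)}$ or $k^{-s^*}n^{o(1)}$ whichever binds, so the product is $n^{-\Theta(\eps)} = o(1)$, giving $(q, m, o(1))$-$\rho_G$-FP hardness. The routine-but-careful part here is controlling the contribution of the atypically large $|t| \sim 1$ values to $\EE[(1+\ldots)^m]$; since $\langle L_u^{\otimes m}, L_v^{\otimes m}\rangle$ can be as large as $2^{\Theta(m)}$ at $t = 1$ but that event has probability $\le \binom{n}{k}^{-1} = n^{-\Omega(k)}$, and $m = k^{O(1)} \ll k \log(n/k)$, these tail terms are negligible — this needs the hypothesis $k = n^{\Omega(\eps)}$ and $m$ not too large, and mirrors the tail-control in \Cref{rem:conditions}.

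Then I would invoke the general theorems. \Cref{lem:GFP=AGFP}(1) immediately upgrades $(q,m,O(1))$-$\rho_G$-FP hardness to $(q,m,O(1))$-GFP$_{\mathbb{Z}_2}$ hardness (after checking \Cref{assump:decomp}, which the excerpt already established for NGCA via the even-Hermite identity). For the SQ conclusion I would apply \Cref{thm:SQ=GFP}(2): I need an information-theoretic sample size $m_{\mathrm{IT}}$ with $\chi^2(\PP^{\otimes m_{\mathrm{IT}}} \| \QQ^{\otimes m_{\mathrm{IT}}}) = O(1)$ and $q \ge m^{m_{\mathrm{IT}}}$. For the sparse NGCA model one computes $\chi^2(\PP \| \QQ) = \EE_{u,v}[\langle L_u,L_v\rangle - 1] \asymp \nu_{s^*}^2 \EE[t^{s^*}] \asymp \nu_{s^*}^2 k^{-s^*}$ (using $\EE[t^{s^*}]$ for the hypergeometric), so $m_{\mathrm{IT}} = \nu_{s^*}^{-2}\min(n^{s^*/2}, k^{s^*})^{1-o(1)} = m^{1-o(1)}$ works, and then $q = \exp(\Theta(n^\eps)) = m^{m_{\mathrm{IT}}^{\Theta(\eps)}}$ — actually one takes $t = \Theta(1/\eps)$ in \Cref{thm:SQ=GFP}(2), checks $t \le \log q/\log m$, and reads off $m' = \Theta(m^{1-\Theta(\eps)})$ and $q' = \exp(n^{\Theta(\eps)})$. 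The main obstacle throughout is the bookkeeping of the two-regime hypergeometric tail bound and making the $\eps$-dependent exponents in $m$, $q$, $m_{\mathrm{IT}}$, $t$ all mutually consistent; the probabilistic content (hypergeometric concentration, truncated-moment estimates) is standard, but getting the $\min(n^{s^*/2}, k^{s^*})$ threshold to drop out cleanly — rather than some messier interpolation — requires choosing $\delta(q)$ in the right regime and is the step I'd expect to need the most care.
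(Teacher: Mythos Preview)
Your outline follows the paper's route: establish $\rho_{\mathbb{Z}_2}$-FP hardness via a tail bound on $|\langle u,v\rangle|$, upgrade via \Cref{lem:GFP=AGFP}(1), then apply \Cref{thm:SQ=GFP}(2). Two points need fixing.

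First, a minor one: the overlap $k\langle u,v\rangle$ is \emph{not} $\Hyp(n,k,k)$. The prior carries random $\pm$ signs on the support, so conditionally on the support intersection $\ell = |S_u\cap S_v|$ (which \emph{is} hypergeometric), $k\langle u,v\rangle$ is a sum of $\ell$ independent Rademachers. The correct tail bound combines both effects and reads $\pi^2(|\langle u,v\rangle|\ge t)\le \exp\bigl(-c\min\{nt^2,kt\}\bigr)$; inverting at level $q^{-2}=\exp\bigl(-\Theta(n^\eps)\bigr)$ gives the cutoff $\kappa = n^{\eps}\max(n^{-1/2},k^{-1})$, and $m\nu_{s^*}^2\kappa^{s^*}\le 1$ is exactly the $\min(n^{s^*/2},k^{s^*})$ threshold. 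Your two-regime intuition is right; the probabilistic description you wrote is not. The paper then simply bounds uniformly on the event $|\langle u,v\rangle|<\kappa$ rather than computing truncated moments.

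Second, and this is a genuine gap: your SQ step does not go through with $t=\Theta(1/\eps)$. In the formula of \Cref{thm:SQ=GFP}(2) one has $m' = m\big/\bigl[(t(1+\epsilon)^{1/t}+\chi^2)(q')^{2/t}\bigr]$; with $q'=\exp\bigl(n^{\Theta(\eps)}\bigr)$ and $t$ constant, the factor $(q')^{2/t}=\exp\bigl(n^{\Theta(\eps)}\bigr)$ annihilates $m'$. The ``In particular'' clause you first reached for requires $q\ge m^{m_{\mathrm{IT}}}$, which for polynomial $m_{\mathrm{IT}}$ forces $q=\exp\bigl(\mathrm{poly}(n)\bigr)$, far larger than $\exp(n^\eps)$. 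The paper instead takes $t=n^{\eps/2}$ so that $(q')^{2/t}=O(1)$, checks $t\le \log q/\log m=\tilde\Theta(n^\eps)$, and bounds $\chi^2(\PP^{\otimes 4t}\,\|\,\QQ^{\otimes 4t})=O(1)$ by splitting on $\{|\langle u,v\rangle|\lessgtr\kappa\}$ --- which is where your ``atypically large $|t|\sim 1$'' tail control actually belongs (not in the FP quantity, which already carries the truncating indicator).
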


The SQ lower bound in Theorem \ref{thm:NGCA_uniform_GFP} was proven in \cite{diakonikolas2017statistical} by a direct argument: here, we prove this SQ-hardness via equivalence to GFP-hardness. The sparse prior was not considered previously and we include it to illustrate the broad applicability of our equivalence.

\subsection{Single-index Models} Another extremely popular class of models in statistics dating back to the 80s \cite{mccullagh2019generalized,ichimura1993semiparametric} are the so-called single-index models.
\begin{definition}[Single-index model] \label{def: single-index}
   A ``$\PP$ versus $\QQ$" detection problem is a Single-index model if:
    \begin{itemize}
        \item There exists a distribution $\mu \in \cP(\RR \times \RR)$ such that, under the planted hypothesis $\PP_u$, we sample $x \sim \cN(0, I_n)$ and $y \sim \mu(\cdot|z_u)$, where $z_u : = \langle x, u\rangle$;
        \item Under the null model, we sample $x \sim \cN(0, I_n)$ and $y \sim \mu_y$, where $\mu_y$ is the marginal distribution of $\mu$.
    \end{itemize}
\end{definition}
 
 Also, all single index models satisfy \Cref{assump:decomp} for $G=\mathbb{Z}_2$. Indeed, if $s^*$ is the generative exponent of the model \cite{damian2024computational}, following \cite{damian2024computational} we know that an Hermite expansion gives for some $s^* \in \NN$ ($s^*$ is called the generative exponent) that for all $u,v \in \mathcal{S}^{n-1}$, 
 \[
 \langle L_u, L_v \rangle_{\QQ}  = 1 + \sum_{i=s^*}^{\infty}  \lambda_i^2 
    \cdot \big(\langle u,v\rangle\big)^i, \qquad  \lambda_i:=\|\zeta_i(Y)\|_{\mu_y}, \qquad \zeta_i(y):=\EE[h_s(z)|y].
    \]
    From this point on, the argument is identical as in the case of NGCA, including the nonnegativity with $G = \mathbb{Z}_2$ as well as the examples of GFP-hardness with uniform and sparse priors. For completeness, we state separate theorems for single-index models: 

\begin{theorem}[GFP-hardness of SI models, uniform prior]\label{thm:SI_uniform_GFP} Consider a SI model with generative exponent $s^*$ and the uniform prior $\pi = \text{Unif} (\mathcal{S}^{n-1})$. For any $\eps \in (0,1/2)$, the SI model is $(\exp{\Theta (n^\eps)},m, O(1))$-GFP hard with
\[
m = \frac{1}{\lambda_{s^*}^2} n^{s^*/2 -\Theta(\eps)}.
\]
Moreover, via our equivalence theorem, the model is $(\exp{ n^{\Theta(\eps)}},m^{1 - \Theta(\eps)})$-SQ hard.
\end{theorem}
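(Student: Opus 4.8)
The plan is to establish $(q,m,O(1))$-$\rho_{G}$-FP hardness with $G=\mathbb{Z}_2$ and then chain the two equivalence theorems. Since the SI model satisfies \Cref{assump:decomp} for $G=\mathbb{Z}_2$ (verified in the text via the Hermite expansion $\langle L_u,L_v\rangle_{\QQ}=1+\sum_{i\ge s^*}\lambda_i^2\langle u,v\rangle^i$), \Cref{lem:GFP=AGFP}(1) reduces the claimed GFP bound to showing $\EE[\langle L_u,L_v\rangle_{\QQ}^{\,m}\ind(\rho_G(u,v)<r(q))]\le1+O(1)$ for $q=\exp(\Theta(n^\eps))$. Under the sign flip, $\langle L_{g(u)},L_{g'(v)}\rangle_{\QQ}-1=\sum_{i\ge s^*}\lambda_i^2(\pm\langle u,v\rangle)^i$, so $\rho_G(u,v)=\sum_{i\ge s^*}\lambda_i^2|\langle u,v\rangle|^i=:\psi(|\langle u,v\rangle|)$ is a strictly increasing function of $|\langle u,v\rangle|$ on $[0,1]$, and hence $\{\rho_G(u,v)<r(q)\}=\{|\langle u,v\rangle|<\tau(q)\}$ with $\tau(q)=\psi^{-1}(r(q))$. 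Under the uniform prior $\langle u,v\rangle$ has density $\propto(1-t^2)^{(n-3)/2}$, so $\pi^{\otimes2}(|\langle u,v\rangle|\ge t)=\exp(-\Theta(nt^2))$ for $n^{-1/2}\ll t\ll1$; equating this with $q^{-2}=\exp(-\Theta(n^\eps))$ gives $\tau(q)=\Theta(n^{-(1-\eps)/2})$, which lies in that window precisely because $\eps\in(0,1/2)$. Since $\tau(q)\to0$, the series $\psi$ is dominated by its leading term, so $r(q)=(1+o(1))\lambda_{s^*}^2\tau(q)^{s^*}$, and on the event $\langle L_u,L_v\rangle_{\QQ}\le1+(1+o(1))\lambda_{s^*}^2|\langle u,v\rangle|^{s^*}$ (using $\sum_i\lambda_i^2=\|L_u\|_{\QQ}^2-1<\infty$).

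The heart of the argument is the resulting truncated exponential moment. Writing $T=|\langle u,v\rangle|$ and $\langle L_u,L_v\rangle_{\QQ}^{\,m}\le\exp((1+o(1))m\lambda_{s^*}^2T^{s^*})$ on $\{T<\tau(q)\}$, integration by parts against $\P(T\ge t)\le e^{-cnt^2}$ gives
\[
\EE\bigl[e^{(1+o(1))m\lambda_{s^*}^2T^{s^*}}\ind(T<\tau(q))\bigr]=1+\int_0^{\tau(q)}\Theta\bigl(m\lambda_{s^*}^2 t^{s^*-1}\bigr)\,e^{\Theta(m\lambda_{s^*}^2 t^{s^*})}\,\P(T\ge t)\dif t,
\]
and the integral is $O(1)$ exactly when the ``annealed'' exponent $m\lambda_{s^*}^2 t^{s^*}$ stays dominated by the Gaussian rate $\Theta(nt^2)$ over the entire window $t\in[0,\tau(q)]$ --- equivalently, when the maximiser of $m\lambda_{s^*}^2 t^{s^*}-cnt^2$ on $[0,\tau(q)]$ contributes only $O(1)$. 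Since the window reaches $\tau(q)=\Theta(n^{-(1-\eps)/2})$ this balances at $m\lambda_{s^*}^2\tau(q)^{s^*-2}=\Theta(n)$, i.e.\ $m=\tfrac1{\lambda_{s^*}^2}n^{s^*/2-\Theta(\eps)}$, exactly the claimed size; via \Cref{lem:GFP=AGFP}(1) this is the desired $(\exp(\Theta(n^\eps)),m,O(1))$-GFP hardness. (A cruder bound $\langle L_u,L_v\rangle_{\QQ}^{\,m}\le(1+r(q))^m\le e^{mr(q)}$ already suffices, at the price of a larger $\Theta(\eps)$-slack.)

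For the SQ conclusion I would apply \Cref{thm:SQ=GFP}(2). The single-sample $\chi^2(\PP\,\|\,\QQ)=\sum_{i\ge s^*}\lambda_i^2\EE[\langle u,v\rangle^i]$ is polynomially small (order $n^{-\Omega(1)}$, since $\EE[\langle u,v\rangle^i]\asymp n^{-i/2}$ for even $i\ll n$ and vanishes for odd $i$), so $\chi^2(\PP^{\otimes m_{\mathrm{IT}}}\,\|\,\QQ^{\otimes m_{\mathrm{IT}}})=O(1)$ for any $m_{\mathrm{IT}}=O(n^{s^*/2})$; choosing $m_{\mathrm{IT}}=n^{\eps/2}$ also gives $q=\exp(\Theta(n^\eps))\ge m^{m_{\mathrm{IT}}}$ since $\log m=\Theta(\log n)$. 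Plugging this into the quoted consequence of \Cref{thm:SQ=GFP}(2) with the free parameter $\delta=\Theta(\eps)$ yields SQ-hardness at query count $m^{\delta m_{\mathrm{IT}}}=\exp(n^{\Theta(\eps)})$ and sample size $\Theta(m^{1-O(\delta)}/m_{\mathrm{IT}})=m^{1-\Theta(\eps)}$, as claimed.

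I expect the main obstacle to be the overlap-tail estimate together with the bookkeeping of model-dependent constants: one must check that the threshold is governed by the leading Hermite coefficient $\lambda_{s^*}$ rather than the full energy $\sum_i\lambda_i^2$, that the truncation scale $\tau(q)$ stays strictly above the typical scale $n^{-1/2}$ (which is exactly why $\eps<1/2$ is imposed), and that the exponent $m\lambda_{s^*}^2 t^{s^*}$ never overtakes $nt^2$ on $[0,\tau(q)]$, including the degenerate small-$s^*$ cases where the relevant maximiser sits in the interior of the window rather than at its endpoint.
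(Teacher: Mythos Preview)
Your plan is essentially the paper's own argument: reduce to $\rho_{\mathbb{Z}_2}$-FP hardness via \Cref{lem:GFP=AGFP}(1), observe that $\rho_{\mathbb{Z}_2}(u,v)=\psi(|\langle u,v\rangle|)$ is monotone in $|\langle u,v\rangle|$ so the truncation is a ball in overlap, use the spherical tail $\pi^{\otimes 2}(|\langle u,v\rangle|\ge\kappa)\le 2e^{-cn\kappa^2}$, and bound $\langle L_u,L_v\rangle_{\QQ}-1$ by its leading Hermite contribution on $\{|\langle u,v\rangle|<\kappa\}$ with $\kappa=n^{-1/2+\eps}$. Your ``crude'' fallback $(1+r(q))^m$ is exactly what the paper does in disguise: it expands $\langle L_u^{\otimes m},L_v^{\otimes m}\rangle=\sum_{j}\binom{m}{j}(\langle L_u,L_v\rangle-1)^j$ and bounds each term pointwise by $(2m\lambda_{s^*}^2\kappa^{s^*})^j$, which is the geometric sum you would get from your crude bound. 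So your integration-by-parts refinement is extra work that buys nothing at the $\Theta(\eps)$ resolution of the statement; the crude bound already yields $m=\lambda_{s^*}^{-2}n^{s^*(1-\eps)/2}=\lambda_{s^*}^{-2}n^{s^*/2-\Theta(\eps)}$ for every $s^*\ge1$, with no case split.

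There is one genuine gap in your SQ step. You write that since the single-sample $\chi^2(\PP\,\|\,\QQ)=\sum_{i\ge s^*}\lambda_i^2\,\EE[\langle u,v\rangle^i]$ is polynomially small, it follows that $\chi^2(\PP^{\otimes m_{\mathrm{IT}}}\,\|\,\QQ^{\otimes m_{\mathrm{IT}}})=O(1)$ for $m_{\mathrm{IT}}=O(n^{s^*/2})$. That implication is false in general: the $m$-sample $\chi^2$ is $\EE_{u,v}[\langle L_u,L_v\rangle_{\QQ}^{\,m}]-1$, and the expectation over $(u,v)$ does not commute with the $m$-th power. The paper controls $\chi^2(\PP^{\otimes 4t}\,\|\,\QQ^{\otimes 4t})$ for $t=n^{\eps/2}$ by repeating the truncated computation and then handling the complementary event $\{|\langle u,v\rangle|>\kappa\}$ via Cauchy--Schwarz, which produces a term of order $M^{4t}e^{-cn^{2\eps}}$ with $M:=\|L_u\|_{\QQ}^2-1$. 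This is where the implicit model assumption $M=O(e^{n^{\eps/2}})$ enters; without it (or something equivalent) the large-overlap tail is uncontrolled and neither your argument nor the paper's goes through. Once you add this split-and-bound step (which is just a reprise of your GFP computation at the smaller ``sample size'' $4t$, plus the tail term), the parameters $t=n^{\eps/2}$, $q'=e^{n^{\eps/2}}$ in \Cref{thm:SQ=GFP}(2) give exactly the $(\exp{n^{\Theta(\eps)}},m^{1-\Theta(\eps)})$ conclusion.
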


\begin{theorem}[GFP-hardness of SI models, sparse prior] \label{thm:SI_sparse_GFP}  Consider a SI model with generative exponent $s^*$ and the $k$-sparse prior $\pi = \text{Unif}(\{ u \in \pm\tfrac{1}{\sqrt{k}} \{0,1\}^n: \|u\|_0 = k\})$. For any $\eps \in (0,1/2)$ so that $k = n^{\Omega (\eps)}$, the SI model is $(\exp{\Theta (n^\eps)},m, O(1))$-GFP hard with
\[
m = \frac{1}{\lambda_{s^*}^2} \min ( n^{s^*/2 -\Theta(\eps)}, k^{s^*} n^{-\Theta(\eps)} ).
\]
Moreover, via our equivalence theorem, the model is $(\exp{ n^{\Theta(\eps)}},m^{1 - \Theta(\eps)})$-SQ hard.
\end{theorem}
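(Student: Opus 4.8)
The plan is to piggyback on the analysis already done for the Non-Gaussian component analysis model with the $k$-sparse prior (\Cref{thm:NGCA_sparse_GFP}): the single-index likelihood ratio has the identical angular form $\langle L_u,L_v\rangle_\QQ = 1+\sum_{i\ge s^*}\lambda_i^2\langle u,v\rangle^i$ with $\sum_{i\ge s^*}\lambda_i^2=\chi^2(\PP_u,\QQ)<\infty$ (since $L_u\in L^2(\QQ)$), the only change being that the NGCA coefficients $\nu_i$ are replaced by $\lambda_i$. As already noted in the text, this expansion makes \Cref{assump:decomp} hold with $G=\mathbb{Z}_2$ acting by $u\mapsto-u$, so by \Cref{lem:GFP=AGFP}(1) it suffices to establish $(q,m,O(1))$-$\rho_G$-FP hardness and then read off the ``Moreover'' SQ statement from \Cref{thm:SQ=GFP}(2). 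For $G=\mathbb{Z}_2$ one computes $\langle L_{\pm u},L_{\pm v}\rangle_\QQ-1=\sum_{i\ge s^*}\lambda_i^2(\pm\langle u,v\rangle)^i$, so $\rho_G(u,v)=\psi(|\langle u,v\rangle|)$ with $\psi(t):=\sum_{i\ge s^*}\lambda_i^2 t^i$ strictly increasing on $[0,1]$; hence the optimal event in the $\rho_G$-FP definition reduces to $\{\,|\langle u,v\rangle|<\delta(q)\,\}$, where $\delta(q)=\sup\{\delta:\pi^{\otimes 2}(|\langle u,v\rangle|\ge\delta)\ge q^{-2}\}$.

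Since $L_u,L_v$ are densities we have $0\le\langle L_u,L_v\rangle_\QQ\le1+\psi(|\langle u,v\rangle|)$, so $\langle L_u^{\otimes m},L_v^{\otimes m}\rangle_\QQ=\langle L_u,L_v\rangle_\QQ^m\le e^{m\psi(t)}$ with $t:=|\langle u,v\rangle|$; and because $\delta(q)\to0$ and $\sum_i\lambda_i^2<\infty$, on $\{t<\delta(q)\}$ the higher Hermite modes are negligible, $\psi(t)\le(1+o(1))\lambda_{s^*}^2 t^{s^*}$. Thus the $\rho_G$-FP quantity is at most $\EE\big[\,e^{(1+o(1))m\lambda_{s^*}^2 t^{s^*}}\,\ind(t<\delta(q))\,\big]$, and the whole task is to show this is $O(1)$.

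The remaining work is about the overlap law under the $k$-sparse prior. Writing $u,v\in\pm k^{-1/2}\{0,1\}^n$ with $\|u\|_0=\|v\|_0=k$, one has $\langle u,v\rangle=W/k$ with $W=\sum_{j=1}^{H}\eps_j$, where $H=|\supp(u)\cap\supp(v)|\sim\Hyp(n,k,k)$ (mean $k^2/n$) and the $\eps_j$ are i.i.d.\ Rademacher. A union bound gives $\pi^{\otimes 2}(|\langle u,v\rangle|\ge\ell/k)\le\binom{k}{\ell}(k/n)^\ell\le(ek^2/(n\ell))^\ell$ for $\ell\ge1$, from which $\delta(q)=\ell^*/k$ for $q=e^{\Theta(n^\eps)}$, with $\ell^*\log(n\ell^*/(ek^2))\asymp n^\eps$ (so $\ell^*=\tilde\Theta(n^\eps)$ when $k=n^{\Omega(\eps)}$), while $\delta(q)\asymp n^{-1/2+\eps/2}$ in the regime $k\ge\sqrt n$ by the (near-)Gaussian behaviour of $W$. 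Splitting $\EE[\cdots]$ into the atom $\{\langle u,v\rangle=0\}$ (contributing $\le1$) and $\sum_{1\le\ell\le\ell^*}\pi^{\otimes 2}(|W|=\ell)\,e^{(1+o(1))m\lambda_{s^*}^2\ell^{s^*}/k^{s^*}}$, a ratio test on consecutive terms (driven by the tail bound) shows the series is $O(1)$ precisely when $m\le\lambda_{s^*}^{-2}\min\!\big(n^{s^*/2-\Theta(\eps)},\,k^{s^*}n^{-\Theta(\eps)}\big)$ — the first branch binding when $k\ge\sqrt n$ (typical overlap $\asymp n^{-1/2}$), the second when $k<\sqrt n$ (typical overlap $0$, the contribution being governed by the $\Theta(k^2/n)$ probability of a nonempty support intersection together with the $\Theta(n^\eps)$ ``budget'' needed to reach $\ell^*$). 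This proves $(q,m,O(1))$-$\rho_G$-FP hardness, hence $(q,m,O(1))$-GFP$_G$ hardness. For the SQ statement, feed this into \Cref{thm:SQ=GFP}(2) (after the routine removal of its parity/exact-probability technicalities): a short direct $\chi^2$ computation — like the first line of \Cref{thm:mSLR} — verifies condition (a), $\chi^2(\PP^{\otimes m_{\mathrm{IT}}}\,\|\,\QQ^{\otimes m_{\mathrm{IT}}})=O(1)$, for a suitable $m_{\mathrm{IT}}$ below the information-theoretic threshold, and condition (b) $q=e^{\Theta(n^\eps)}\ge m^{m_{\mathrm{IT}}}$ then holds, giving the claimed $(e^{n^{\Theta(\eps)}},m^{1-\Theta(\eps)})$-SQ hardness.

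The \emph{main obstacle} is the combinatorial large-deviation analysis of the signed sparse overlap $W$ together with the bookkeeping that converts the runtime budget $\log q=\Theta(n^\eps)$ into the overlap truncation $\delta(q)$ and then into the threshold on $m$: this is exactly where the $\Theta(\eps)$ losses in the exponents of $m$ (and, correspondingly, the exponents in the final SQ parameters) are produced, and one has to verify they remain compatible with convergence of the series above. Everything else — the reduction to $\rho_G$-FP hardness via \Cref{lem:GFP=AGFP}, the identification of $\rho_G$ as a monotone function of $|\langle u,v\rangle|$, and the exponential bound on $\langle L_u,L_v\rangle_\QQ^m$ — is routine and mirrors the NGCA sparse-prior argument essentially verbatim.
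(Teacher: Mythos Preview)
Your proposal is correct and follows the same high-level route as the paper: reduce to $\rho_{\mathbb{Z}_2}$-FP hardness via \Cref{lem:GFP=AGFP}(1), note that $\rho_{\mathbb{Z}_2}(u,v)$ is a monotone function of $|\langle u,v\rangle|$ (so the optimal event is an overlap sublevel set), bound $\langle L_u,L_v\rangle_\QQ-1$ by $\lambda_{s^*}^2|\langle u,v\rangle|^{s^*}$ up to constants on the truncated region, and then invoke \Cref{thm:SQ=GFP}(2) for the SQ conclusion.

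The only substantive tactical difference is in how the truncated FP integral is controlled. You propose a large-deviation expansion of the signed sparse overlap $W$ (hypergeometric support intersection plus Rademacher signs) and a term-by-term ratio test on $\sum_{\ell}\pi^{\otimes2}(|W|=\ell)\,e^{m\lambda_{s^*}^2(\ell/k)^{s^*}}$. The paper instead invokes the ready-made tail bound $\pi^{\otimes2}(|\langle u,v\rangle|\ge t)\le\exp{-c\min(nt^2,kt)}$ (\Cref{lemma:sparse-tail}), sets the truncation level directly at $\kappa=n^{\eps}\max(n^{-1/2},k^{-1})$, and then bounds the FP quantity by the geometric series $1+\sum_{j\ge1}(2m\lambda_{s^*}^2\kappa^{s^*})^j$ coming from the binomial expansion of $\langle L_u,L_v\rangle_\QQ^m=(1+(\langle L_u,L_v\rangle_\QQ-1))^m$. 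This is both shorter and avoids the bookkeeping you flag as the ``main obstacle'': the two-regime formula $m=\lambda_{s^*}^{-2}\min(n^{s^*/2-\Theta(\eps)},k^{s^*}n^{-\Theta(\eps)})$ drops out immediately from $\kappa^{-s^*}$. The $\chi^2$ verification for the SQ step is likewise handled by the same truncation split rather than a separate computation. Your approach would also go through, but the paper's uniform-on-the-truncated-region bound makes the series analysis unnecessary.
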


The SQ lower bounds in Theorem \ref{thm:SI_uniform_GFP} and Theorem \ref{thm:SI_sparse_GFP} were proven in \cite{damian2024computational} and \cite{chencan} via direct argument. Here, we obtain these bounds via the equivalence of the SQ-hardness and GFP-hardness.

\subsection{Truncated statistics: convex truncation}
Learning from truncated data has been a topic of interest since the late 1800s and the pioneering works of Galton \cite{galton1898examination} and Pearson \cite{pearson1902systematic}. Interestingly, there has been some recent line of works on truncated statistics tasks that seeks to revisit these old questions from a computational viewpoint, see e.g., \cite{daskalakis2018efficient}, \cite{daskalakis2019computationally} and references therein. In this line of recent work, the problem of detecting a convex truncation in Gaussian noise has been proposed.

\begin{definition} \label{def: trunc_0}
    Fix $\alpha \in (0,1)$.
    A hypothesis testing ``$\PP$ versus $\QQ$" problem is called an $\alpha$-Convex Truncation model if it satisfies:
    \begin{enumerate}
        \item Under the null hypothesis $\QQ$, $x \sim N(0,I_n)$.
        \item Under the planted hypothesis $\PP_K$, $x \sim N(0,I_n)|K$ where $K$ is a symmetric convex body with Gaussian volume at most $1-\alpha$. 
    \end{enumerate}
\end{definition}

Interestingly, also all $\alpha$-Convex Truncation models satisfy \Cref{assump:decomp} for the trivial group $G$. Perhaps this fact is even more interesting because it turns out \Cref{assump:decomp} is exactly equivalent with the celebrated Gaussian Correlation Inequality on convex bodies \cite{royen2014simple,latala2017royen}.

\begin{lemma}
Consider an $\alpha$-convex truncated model in Definition~\ref{def: trunc_0}. For any $K,K'$ two symmetric convex bodies of Gaussian volume $1-\alpha,$ it holds 
$ \left\langle L_K, L_{K'} \right\rangle_{\QQ} \geq 1.$
This is to say, and $\alpha$-Convex Truncated Model satisfies \Cref{assump:decomp} for the trivial group. 
\end{lemma}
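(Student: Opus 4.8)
The plan is to reduce the claimed inequality to the Gaussian correlation inequality (GCI) via an explicit computation of the likelihood ratio. Since under $\QQ$ we have $x \sim N(0,I_n)$, while under $\PP_K$ we have $x \sim N(0,I_n)$ conditioned on the event $\{x \in K\}$, the likelihood ratio is simply
\[
L_K(x) = \frac{\dif \PP_K}{\dif \QQ}(x) = \frac{\ind(x \in K)}{\gamma_n(K)},
\]
where $\gamma_n$ denotes the standard Gaussian measure on $\RR^n$; this is well-defined and lies in $L^2(\QQ)$ because $\gamma_n(K) \ge 1-\alpha > 0$. Since here $G = \{\mathrm{id}\}$ is the trivial group, verifying \Cref{assump:decomp} reduces, by \eqref{eq:ptw}, to establishing the pointwise bound $\langle L_K, L_{K'}\rangle_{\QQ} \ge 1$ for every pair of symmetric convex bodies $K,K'$ in $\supp(\pi)$.

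Next I would compute the inner product directly. Using the formula above,
\[
\langle L_K, L_{K'} \rangle_{\QQ} = \EE_{x \sim N(0,I_n)}\!\left[ \frac{\ind(x \in K)}{\gamma_n(K)} \cdot \frac{\ind(x \in K')}{\gamma_n(K')} \right] = \frac{\gamma_n(K \cap K')}{\gamma_n(K)\,\gamma_n(K')},
\]
so the desired inequality $\langle L_K, L_{K'}\rangle_{\QQ} \ge 1$ is literally equivalent to $\gamma_n(K \cap K') \ge \gamma_n(K)\,\gamma_n(K')$.

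The final step is to invoke the GCI: for any two origin-symmetric convex bodies $K, K' \subseteq \RR^n$ one has $\gamma_n(K \cap K') \ge \gamma_n(K)\,\gamma_n(K')$. This is precisely Royen's theorem \cite{royen2014simple} (see also \cite{latala2017royen}); since in the $\alpha$-Convex Truncation model $\supp(\pi)$ consists of symmetric convex bodies, the hypothesis of the GCI is met and the bound holds for all $K, K'$ in the support, which is exactly \Cref{assump:decomp} for the trivial group. Conversely, any pair of symmetric convex bodies (of Gaussian volume bounded as in the definition) can be realized within such a model, so \Cref{assump:decomp} for this class of tasks is not merely a consequence of, but in fact equivalent to, the GCI — as asserted in the surrounding discussion. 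The only substantive ingredient is the GCI itself, a deep theorem; the reduction to it is elementary, and the real content of the lemma is the observation that the assumption and the GCI coincide.
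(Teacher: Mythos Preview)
Your proof is correct and follows essentially the same approach as the paper: compute the likelihood ratio $L_K(x)=\ind(x\in K)/\gamma_n(K)$, express the inner product as $\gamma_n(K\cap K')/(\gamma_n(K)\gamma_n(K'))$, and invoke Royen's Gaussian correlation inequality. The additional remark about the equivalence with GCI is also consistent with the surrounding discussion in the paper.
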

\begin{proof}
  For any $K,$ it holds $L_K(x)=1(x \in K)/\QQ(K), x \in \RR^n.$ Hence, \[\left\langle L_K, L_{K'} \right\rangle =\frac{\QQ(K \cap K')}{\QQ(K)\QQ(K')}.\]But the so-called Gaussian correlation inequaality for symmetric convex bodies in convex geometry \cite{royen2014simple} states exactly that for any symmetric convex bodies $K,K'$ it holds $\QQ(K \cap K') \geq \QQ(K)\QQ(K')$ yielding the result.
\end{proof}

Now, for the $\alpha$-Convex truncation models, the state-of-the-art polynomial-time algorithms require $O(n/\alpha^2)$ samples \cite{de2023testing}, and the best known information-theoretic lower bound is $\Omega(n/\alpha)$ samples \cite{de2023testing}. Using the GFP-hardness to SQ-hardness framework we prove that for some prior on $K$, it is SQ-hard to distinguish with $\tilde{o}(n/\alpha^2)$ samples, providing evidence that the polynomial-time method from \cite{de2023testing} cannot be improved.

\subsubsection{A new SQ lower bound}

To apply our framework, we focus on the following prior on $K$, a variant of which has been studied in \cite{de2023testing} to prove their information-theoretic lower bound of $\Omega(n/\alpha)$ samples. To define it we let \[K=K_v=\{x \in \mathbb{R}^d: |\langle x,v \rangle| \leq \kappa\},\] for any $v \in \mathrm{Unif}(\{-1/\sqrt{d},1/\sqrt{d}\}^{d})$. Here, we choose $\kappa=\kappa(\alpha,d)$ is such that the Gaussian measure of each $K_v$ is $1-\alpha.$ Then our prior is uniform among $K_v, v \sim \mathrm{Unif}(\{-1/\sqrt{d},1/\sqrt{d}\}^{d})$. We refer to the $\alpha$-convex truncation setting with this prior as the \textit{``$\alpha$-Slice Convex Truncation"} model.

We first point out that for any $m=\omega(n/\alpha)$, detection with $m$ samples is always possible in the $\alpha$-Slice Convex Truncation model from a time-inefficient method. Indeed, one can brute-force search for some $v \in \{-1/\sqrt{d},1/\sqrt{d}\}^{d}$ for which it holds: for all $i=1,2,\ldots m,$ $|\langle x_i, v \rangle| \leq \kappa$. Under $\mathbb{P}$, there always exists such a vector $v$ and hence the brute force search algorithm will find it with probability 1. Under $\mathbb{Q}$ though a direct union bund gives that such a $v$ exists only with probability at most $2^d(1-\alpha)^m=o(1)$ for any $m=\omega(d/\alpha).$ Hence, the algorithm can detect with probability $1-o(1).$ In that context, we prove the following result.

\begin{theorem} [$\rho_{Id}$-FP- and SQ-hardness of Convex Truncation] \label{lemma: fp-CT}
    Let $n \in \mathbb{N}$ growing and arbitrary $\alpha=\alpha_n \in (0,1)$. There exists a universal constant $C>0$ and a prior $\pi$ on the convex bodies $K$ of Gaussian volume $1-\alpha$ such that for any $q \in \mathbb{N}$ with $q=e^{o(\alpha n)}$, the $\alpha$-Convex Truncation model under $\pi$ is $(q,\frac{Cn}{\alpha^2 \log(1/\alpha)^{3/2}\log q})$-$\rho_{\mathrm{Id}}$-FP-hard.
    
In particular, for any constant $T>0$ if $\alpha=\omega(\frac{(\log n)^T}{n})$ then the $\alpha$-Convex Truncation model under $\pi$ is $(e^{\Theta((\log n)^{T})},\Theta(\frac{n}{\alpha^2 \log(1/\alpha)^{3/2} (\log n)^{2T+1}}))$-SQ hard.\end{theorem}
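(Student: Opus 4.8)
The plan is to verify \Cref{assump:decomp} (already done above via the Gaussian correlation inequality), identify the relevant overlap $\rho_{\mathrm{Id}}$, establish $\rho_{\mathrm{Id}}$-FP hardness by a direct computation of $\langle L_{K_v}^{\otimes m}, L_{K_{v'}}^{\otimes m}\rangle_\QQ$ for the slice prior, and finally invoke \Cref{thm:SQ=GFP} (part 2) to transfer this into the claimed SQ lower bound. Since the group is trivial, $\rho_{\mathrm{Id}}(v,v') = |\langle L_{K_v}, L_{K_{v'}}\rangle_\QQ - 1| = \QQ(K_v \cap K_{v'})/\QQ(K_v)\QQ(K_{v'}) - 1$, which by the correlation inequality is nonnegative and is a monotone function of the correlation $\langle v, v'\rangle$ of the two sign vectors. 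The key quantitative input is a sharp estimate: for $K_v = \{x : |\langle x,v\rangle|\le \kappa\}$ with $\QQ(K_v) = 1-\alpha$, one has $\kappa \asymp \alpha$ (up to $\sqrt{\log(1/\alpha)}$ factors, since $\kappa = \Phi^{-1}(1 - \alpha/2 + 1/2)\cdot\sqrt{\text{var}}$ roughly, so $\kappa$ is small when $\alpha$ is small after rescaling — more precisely $\kappa = \Theta(\alpha)$ up to logs), and one needs to bound $\QQ(K_v \cap K_{v'})$ for two slices whose normals have inner product $\tau := \langle v,v'\rangle \in [-1,1]$. This is a two-dimensional Gaussian integral over a parallelogram-type region, giving $\langle L_{K_v}, L_{K_{v'}}\rangle_\QQ = 1 + O(\tau^2/\alpha^{O(1)}\cdot \text{polylog}(1/\alpha))$ in the regime where $\tau$ is small; the precise exponent of $\alpha$ and the $\log(1/\alpha)^{3/2}$ factor in the theorem should come out of carefully expanding this integral.

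The main steps, in order: (1) Compute $\kappa(\alpha,d)$ asymptotics and confirm $\langle L_{K_v}, L_{K_{v'}}\rangle_\QQ$ depends only on $\tau = \langle v, v'\rangle$; derive the bound $\langle L_{K_v}^{\otimes m}, L_{K_{v'}}^{\otimes m}\rangle_\QQ = (1 + \psi(\tau))^m \le \exp(m\psi(\tau))$ with $\psi(\tau) \le C'\tau^2 \alpha^{-2}\log(1/\alpha)^{3/2}$ for $|\tau|$ below a threshold. (2) Understand the distribution of $\tau = \langle v,v'\rangle$ for $v,v' \sim \mathrm{Unif}(\{\pm 1/\sqrt d\}^d)$: this is $\frac1d\sum_{i=1}^d \epsilon_i$ with $\epsilon_i$ i.i.d.\ Rademacher, so $\tau$ is sub-Gaussian with variance $1/d$, and $\pi^{\otimes 2}(|\tau| \ge r) \le 2e^{-dr^2/2}$. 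Hence $r(q)$ as in \eqref{eq:AG-FP} satisfies $r(q) \asymp \sqrt{\log q / d}$. (3) Plug in: on the event $\rho_{\mathrm{Id}}(v,v') < r(q)$, i.e.\ $|\tau| \lesssim \sqrt{\log q/d}$, we get $m\psi(\tau) \le C' m \log q/(d\alpha^2)\log(1/\alpha)^{3/2}$, which is $O(1)$ precisely when $m \le Cn/(\alpha^2 \log(1/\alpha)^{3/2}\log q)$ (taking $d = n$). This yields $\EE[\langle L_{K_v}^{\otimes m}, L_{K_{v'}}^{\otimes m}\rangle_\QQ \ind(\rho_{\mathrm{Id}} < r(q))] \le 1 + O(1)$, i.e.\ $\rho_{\mathrm{Id}}$-FP hardness, but one must also check the truncation near $|\tau|$ close to $r(q)$ doesn't blow up — here monotonicity of $\psi$ and the tail bound on $\tau$ handle the contribution of the boundary annulus. (4) For the SQ conclusion, apply \Cref{thm:SQ=GFP}(2) with $m_{\mathrm{IT}} = \Theta(n/\alpha)$ (the information-theoretic lower bound of \cite{de2023testing}), checking the two hypotheses: $\chi^2(\PP^{\otimes m_{\mathrm{IT}}} \| \QQ^{\otimes m_{\mathrm{IT}}}) = O(1)$ follows since $m_{\mathrm{IT}}$ is below the detection threshold (brute-force argument shows detection needs $\omega(n/\alpha)$), and $q \ge m^{m_{\mathrm{IT}}}$ holds for $q = e^{\Theta((\log n)^T)}$ when $\alpha = \omega((\log n)^T/n)$; then read off the stated SQ parameters.

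The main obstacle I expect is step (1): obtaining a clean and correct bound on the two-slice Gaussian intersection probability $\QQ(K_v \cap K_{v'})$ with the right dependence on both $\alpha$ and $\tau$, getting the exponent $\alpha^{-2}$ and the $\log(1/\alpha)^{3/2}$ factor exactly. One subtlety is that $K_v \cap K_{v'}$ lives in the two-dimensional span of $v,v'$ (tensored with standard Gaussian in the orthogonal directions), so it reduces to a 2D integral over $\{|s|\le\kappa, |\tau s + \sqrt{1-\tau^2} t| \le \kappa\}$ against $\mathcal N(0,I_2)$; when $\tau \to 0$ this factorizes and gives $(1-\alpha)^2$, and the leading correction in $\tau$ needs a second-order Taylor expansion in $\tau$ of the inner Gaussian integral, where the Gaussian density at the cutoff $\kappa$ and the width of the strip (both controlled by $\alpha$ and $\log(1/\alpha)$) enter. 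A secondary technical point is verifying the hypothesis $\pi^{\otimes 2}(\rho_{\mathrm{Id}}(v,v') < r) = 1 - q^{-2}$ holds \emph{with equality} for some $r$ (needed for \Cref{thm:SQ=GFP}(2)); since $\tau$ takes finitely many values this may require a minor smoothing or the remark that the conditions "can be easily removed," which the paper already grants.
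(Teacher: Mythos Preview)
Your overall strategy matches the paper's: show $\langle L_{K_v}, L_{K_{v'}}\rangle_\QQ$ depends only on $\tau=\langle v,v'\rangle$, bound the excess $\psi(\tau) := \langle L_{K_v}, L_{K_{v'}}\rangle_\QQ - 1$ for small $\tau$, use the sub-Gaussian tail of $\tau$ under the Rademacher prior, and then invoke \Cref{thm:SQ=GFP}. However, there are two genuine errors.

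\textbf{The $\alpha$-dependence in $\psi(\tau)$ is inverted.} You claim $\psi(\tau)\le C'\tau^2\alpha^{-2}\log(1/\alpha)^{3/2}$, but the correct bound has a \emph{positive} power of $\alpha$: for small $\tau$ one has $\psi(\tau)\le C\tau^2\alpha^2\log(1/\alpha)^{3/2}$. (Sanity check: small $\alpha$ means the slab is wide and the truncation is mild, so $\PP_u$ is close to $\QQ$ and $\psi$ must be small.) Relatedly, $\kappa$ is not $\asymp\alpha$: since $\Phi(\kappa)=1-\alpha/2$ one has $\kappa\asymp\sqrt{\log(1/\alpha)}\to\infty$ as $\alpha\to0$. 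Your step (3) is then internally inconsistent: from $m\cdot\tau^2\alpha^{-2}\log(1/\alpha)^{3/2}=O(1)$ with $\tau^2\asymp\log q/n$ you would get $m\le Cn\alpha^2/(\log(1/\alpha)^{3/2}\log q)$, not the $Cn/(\alpha^2\log(1/\alpha)^{3/2}\log q)$ you wrote. The paper avoids the 2D integral by using the one-dimensional Hermite expansion
\[
\langle L_u,L_v\rangle_\QQ=1+(1-\alpha)^{-2}\sum_{i\ge1}f_{2i}^2\,\tau^{2i},
\]
where $f_j$ is the $j$-th Hermite coefficient of $\ind_{[-\kappa,\kappa]}$. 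The leading term is $f_2^2=O(\kappa\phi(\kappa)^2)=O(\alpha^2\log(1/\alpha)^{3/2})$ via Mill's ratio $\phi(\kappa)=\Theta(\alpha\kappa)$, and the tail $\sum_{i\ge1}f_i^2\le\alpha$ by Parseval handles the higher-order terms. This gives the bound $\psi(\tau)\le C(1-\alpha)^{-2}\tau^2(\alpha^2\log(1/\alpha)^{3/2}+\alpha\tau^2)$ directly.

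\textbf{Your choice of $m_{\mathrm{IT}}$ makes condition (b) of \Cref{thm:SQ=GFP} fail.} With $m_{\mathrm{IT}}=\Theta(n/\alpha)$ one needs $\log q\ge m_{\mathrm{IT}}\log m=\Theta((n/\alpha)\log m)$, but the FP-hardness is only established for $\log q=o(\alpha n)\ll n/\alpha$, so (b) can never hold. The paper instead takes $m_{\mathrm{IT}}=(\log n)^T$: this is far below the true IT threshold so (a) is automatic, while choosing $q=e^{\Theta((\log n)^{T+1})}$ gives $\log q\gtrsim (\log n)^T\log m$ so (b) holds. Applying \Cref{thm:SQ=GFP} with $t=\Theta((\log n)^T)$ then yields SQ-hardness at $q'=e^{\Theta((\log n)^T)}$ with $m'=\Theta(m/t)$, producing the $(\log n)^{2T+1}$ factor in the statement.
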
Satisfyingly the proof of this result is also relatively short. The proof of this Theorem can be found in Appendix \ref{app:proofs_examples}.
\section{GFP-hardness is not always equal to FP-hardness}\label{sec:count}

Recall that by definition, FP-hardness implies GFP-hardness. In this section, we show that the converse does not necessarily hold: we construct a $\PP$ versus $\QQ$ detection task that satisfies \Cref{assump:decomp} and is easy under the FP criterion but hard under the GFP criterion. In particular, by using \Cref{lem:GFP=AGFP} and \Cref{thm:SQ=GFP}, the problem is also SQ-hard. Thus, while the FP criterion fails to capture the SQ-hardness in this case, our optimized GFP criterion correctly predicts it. As our initial departure from FP-hardness was that in many models the Euclidean overlap $\langle u, v \rangle$ might not be the ``correct" choice, our example is carefully creating a model where the natural ``overlap" $\rho_G(u,v)$ (based on \Cref{thm:SQ=GFP}) is not a function of the Euclidean dot product.

The $\PP$ versus $\QQ$ problem is defined as follows. The null model is $\QQ =  \rad\left(\tfrac{1}{2}\right)^{\otimes (n+1)}$, i.e., each coordinate is an independent Rademacher random variable. For a signal $u \in \{0,1\}^{n+1}$, the sample $x \sim \PP_u$ is generated by drawing each coordinate independently according to
\begin{align}
     x_i = 
    \begin{cases} 
        +1, \quad\text{w.p.}\;\; \frac{1}{2} + r \cdot \frac{1-(1-\alpha) \cdot u_i}{2}, \\ 
        -1, \quad\text{w.p.}\;\; \frac{1}{2} - r \cdot \frac{1-(1-\alpha) \cdot u_i}{2},
    \end{cases}
\end{align}
where $\alpha, r \in (0,1)$ are fixed constants to be chosen later. The following holds.

\begin{lemma} \label{claim: counterexample-ratio}
    Let $u, v\in \{0,1\}^{n+1}$. For any $u,v \in \{0,1\}^{n+1}$, $\langle L_u, L_v \rangle_{\QQ}  = \prod_{i=0}^{n} \left(1 + r^2 \cdot \alpha^{u_i+v_i}\right)$.
\end{lemma}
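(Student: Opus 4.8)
The plan is to compute the inner product $\langle L_u, L_v\rangle_{\QQ}$ coordinate by coordinate, exploiting the product structure of both $\QQ$ and $\PP_u$. Since $\QQ = \rad(\tfrac12)^{\otimes(n+1)}$ and each $\PP_u$ is a product of single-coordinate distributions, the likelihood ratio $L_u$ factorizes as $L_u(x) = \prod_{i=0}^n \ell_{u_i}(x_i)$, where $\ell_{b}(\pm 1)$ for $b \in \{0,1\}$ is the ratio of the $\PP_u$-probability of $x_i = \pm 1$ to $\QQ$-probability $\tfrac12$. Concretely, writing $p_i := \tfrac12 + r\cdot\frac{1-(1-\alpha)u_i}{2}$ for the probability that $x_i = +1$ under $\PP_u$, we get $\ell_{u_i}(+1) = 2p_i = 1 + r(1-(1-\alpha)u_i)$ and $\ell_{u_i}(-1) = 2(1-p_i) = 1 - r(1-(1-\alpha)u_i)$. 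It is convenient to set $a_i := r(1-(1-\alpha)u_i)$, so that $a_i = r$ if $u_i = 0$ and $a_i = r\alpha$ if $u_i = 1$; then $L_u(x) = \prod_i (1 + a_i x_i)$.

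With this factorization, the inner product splits as a product over coordinates:
\begin{align}
\langle L_u, L_v\rangle_{\QQ}
= \EE_{x\sim\QQ}\Bigl[\prod_{i=0}^n (1 + a_i x_i)(1 + b_i x_i)\Bigr]
= \prod_{i=0}^n \EE_{x_i\sim\rad(1/2)}\bigl[(1 + a_i x_i)(1 + b_i x_i)\bigr],
\end{align}
where $b_i := r(1-(1-\alpha)v_i)$. Each single-coordinate expectation is elementary: expanding $(1 + a_i x_i)(1 + b_i x_i) = 1 + (a_i + b_i)x_i + a_i b_i x_i^2$ and using $\EE[x_i] = 0$, $\EE[x_i^2] = 1$ over a Rademacher variable gives $\EE[(1+a_i x_i)(1+b_i x_i)] = 1 + a_i b_i$. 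So $\langle L_u, L_v\rangle_{\QQ} = \prod_{i=0}^n (1 + a_i b_i)$.

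It remains to identify $a_i b_i$ with $r^2 \alpha^{u_i + v_i}$, which is just a case check on the four possibilities for $(u_i, v_i) \in \{0,1\}^2$: if $u_i = v_i = 0$ then $a_i b_i = r\cdot r = r^2 = r^2\alpha^0$; if exactly one of $u_i, v_i$ is $1$ then $a_i b_i = r\cdot(r\alpha) = r^2\alpha = r^2\alpha^1$; and if $u_i = v_i = 1$ then $a_i b_i = (r\alpha)(r\alpha) = r^2\alpha^2$. In all cases $a_i b_i = r^2\alpha^{u_i + v_i}$, which yields $\langle L_u, L_v\rangle_{\QQ} = \prod_{i=0}^n (1 + r^2\alpha^{u_i+v_i})$ as claimed. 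There is no real obstacle here; the only points requiring any care are confirming absolute continuity of $\PP_u$ with respect to $\QQ$ (immediate, since $r < 1$ keeps all probabilities strictly positive) and keeping the indexing consistent with the $(n+1)$ coordinates indexed $0,\dots,n$.
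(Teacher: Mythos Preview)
Your proof is correct and takes essentially the same approach as the paper: factorize $L_u(x)=\prod_i(1+a_ix_i)$, split the expectation into a product over coordinates using independence, compute each single-coordinate Rademacher expectation as $1+a_ib_i$, and then verify $a_ib_i=r^2\alpha^{u_i+v_i}$ by the four-case check. The only cosmetic difference is that you introduce the shorthand $a_i,b_i$, which slightly streamlines the computation.
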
 Notice that our construction importantly ensures that the likelihood ratio inner product $\< L_u,L_v\>$ is not solely a function of $\<u,v\>$, but instead has a more intricate dependence on $u$ and $v$. It is exactly this reason that leads to the discrepancy between GFP and FP hardness stated below.

Let us consider the $m$-sample version of the hypothesis testing problem. The null hypothesis is then $\QQ^{\otimes m}$ and the alternative hypothesis is $\EE_{u\sim\pi}\PP_u^{\otimes m}$, where $u$ is sampled from the following two-point prior $\pi$:
\begin{align}\label{eq:prior-2-points-counter}
    u = \begin{cases}
        (1, 0, \dots, 0), \qquad \text{w.p.} \quad \rho, \\
        (0, 1, \dots, 1), \qquad \text{w.p.} \quad 1-\rho.
    \end{cases}.
\end{align}
We abbreviate these vectors as $u_1 = (1, 0, \dots, 0)$ and $u_2 = (0, 1, \dots, 1)$ for convenience. Using Lemma~\ref{claim: counterexample-ratio}, it holds that
\begin{align} \label{eq: counterexample-m-sample}
    \left\langle L_u^{\otimes m}, L_v^{\otimes m} \right\rangle = \prod_{i=0}^n \left( 1 + r^2 \cdot \alpha^{u_i + v_i} \right)^m.
\end{align}

Let's next show that this problem is GFP hard but FP easy. Note that $\< L_u,L_v\> \geq 1$ for all $u,v$ and therefore the model verifies \Cref{assump:decomp} for the trivial group. 

\begin{theorem}\label{thm:counterexample}
     For the two-point prior $\pi$ in \eqref{eq: counterexample-m-sample} with $\rho = \exp{-n^\eps/2}$, and for $r = n^{-1/2}$, $\alpha = n^{-1+2\epsilon}$, $m = n^{1-\epsilon}$ and $D = n^{\epsilon}$, where $\epsilon > 0$ is any small constant, the following hold. The $m$-sample hypothesis testing problem $\EE_{u\sim\pi}\PP^{\otimes m}_u$ versus $\QQ^{\otimes m}$ is $(e^{D/2}, m, \Theta(n^{-\epsilon}))$-GFP hard but not $(n^{-1}, m,\exp{\Theta (n^{\epsilon}) })$-FP hard. Moreover, via our equivalence theorem the model is $(e^{n^{\Theta(\eps)}},n^{1-\Theta(\epsilon)})$-SQ hard.
\end{theorem}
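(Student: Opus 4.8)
The plan is to work directly with the two-point prior. Since $\pi$ is supported on $u_1 = (1,0,\dots,0)$ and $u_2 = (0,1,\dots,1)$, the product $\pi^{\otimes 2}$ has four atoms, and by \Cref{claim: counterexample-ratio} together with the tensorization identity $\langle L_u^{\otimes m}, L_v^{\otimes m}\rangle = \langle L_u, L_v\rangle^m$, all four $m$-sample likelihood-ratio inner products are available in closed form. Separating the index $i=0$ from $i\ge 1$ in $\prod_i(1 + r^2\alpha^{u_i+v_i})$ gives, up to $(1+o(1))$ factors, $\langle L_{u_2}^{\otimes m}, L_{u_2}^{\otimes m}\rangle = \big((1+r^2)(1+r^2\alpha^2)^n\big)^m = 1 + \Theta(n^{-\epsilon})$ (here $r^2m = n^{-\epsilon}$ and the $\alpha^2$-terms are lower order), $\langle L_{u_1}^{\otimes m}, L_{u_1}^{\otimes m}\rangle = \big((1+r^2\alpha^2)(1+r^2)^n\big)^m = \exp{\Theta(n^{1-\epsilon})}$, and $\langle L_{u_1}^{\otimes m}, L_{u_2}^{\otimes m}\rangle = (1+r^2\alpha)^{m(n+1)} = \exp{\Theta(n^{\epsilon})}$. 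The structural point that drives the argument is that the off-diagonal pair $(u_1,u_2)$ has Euclidean overlap $\langle u_1,u_2\rangle = 0$ --- the smallest possible --- yet its $m$-sample likelihood-ratio inner product is super-polynomially large, while the pair $(u_2,u_2)$, which carries $(1-\rho)^2 = 1-o(1)$ of the $\pi^{\otimes 2}$-mass, has inner product essentially $1$.

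For the GFP-hardness half it suffices to exhibit one admissible event with small objective. Take $A = \{u=u_2\}\cap\{v=u_2\}$, which is $G^2$-invariant for $G = \{\mathrm{id}\}$; its mass is $(1-\rho)^2$, and because $\rho$ is chosen exponentially small relative to the runtime proxy $q = e^{D/2}$, one has $1-(1-\rho)^2 = 2\rho-\rho^2 \le q^{-2}$, so $A$ is feasible in the infimum defining GFP$_G$-hardness. Then the GFP left-hand side is at most $(1-\rho)^2\langle L_{u_2}^{\otimes m}, L_{u_2}^{\otimes m}\rangle = 1 + \Theta(n^{-\epsilon})$, which is exactly the asserted $(e^{D/2}, m, \Theta(n^{-\epsilon}))$-GFP hardness. (Alternatively one may route through \Cref{lem:GFP=AGFP}: the model satisfies \Cref{cond:nonnegative_gp} with the trivial group, since \Cref{claim: counterexample-ratio} makes each factor $1 + r^2\alpha^{u_i+v_i} \ge 1$, and $m\chi^2(\PP,\QQ) = O(n^{-\epsilon}) = O(1)$; the $\rho_G$-optimal event then excludes precisely the three amplified atoms $(u_1,u_1),(u_1,u_2),(u_2,u_1)$.)

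For the FP-easiness half I use that the FP criterion is pinned to the Euclidean overlap: for every $q$ one has $\delta(q)\ge 0$ (with the convention $\sup\emptyset = 0$), so the region $\{|\langle u,v\rangle|\le\delta(q)\}$ contains every minimal-overlap pair, in particular $(u_1,u_2)$ and $(u_2,u_1)$; this applies to the $q = n^{-1}$ in the statement. Hence the FP left-hand side is at least $2\rho(1-\rho)\langle L_{u_1}^{\otimes m}, L_{u_2}^{\otimes m}\rangle = \exp{\Theta(n^\epsilon)}$, which exceeds $1 + \exp{\Theta(n^\epsilon)}$ for a suitable constant in the exponent, so the model is not $(n^{-1}, m, \exp{\Theta(n^\epsilon)})$-FP hard. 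Finally, the SQ statement follows from GFP-hardness via \Cref{lem:GFP=AGFP} and \Cref{thm:SQ=GFP}(2): the model satisfies \Cref{cond:nonnegative_gp} with the trivial group, and $\chi^2(\PP^{\otimes k}, \QQ^{\otimes k})$ --- whose binding term is $\rho^2\langle L_{u_1}^{\otimes k}, L_{u_1}^{\otimes k}\rangle \asymp \rho^2 e^{\Theta(k)}$ --- is $O(1)$ for $k = m_{\mathrm{IT}} := \widetilde{\Theta}(n^\epsilon)$, so that $q = e^{D/2}\ge m^{m_{\mathrm{IT}}}$. Substituting $m = n^{1-\epsilon}$, $m_{\mathrm{IT}} = \widetilde{\Theta}(n^\epsilon)$ and GFP-slack $\Theta(n^{-\epsilon})$ into \Cref{thm:SQ=GFP}(2), with the free parameter $\delta$ taken to be a small constant, yields $(e^{n^{\Theta(\epsilon)}}, n^{1-\Theta(\epsilon)})$-SQ hardness.

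The step I expect to be most delicate is the quantitative reconciliation inside the GFP argument: excluding the zero-overlap atoms costs $\approx 2\rho$ units of $\pi^{\otimes 2}$-mass, forcing $\rho\lesssim q^{-2}$, whereas FP-easiness needs $\rho\cdot\langle L_{u_1}^{\otimes m}, L_{u_2}^{\otimes m}\rangle$ to be super-polynomially large. These coexist precisely because the amplification exponent $r^2\alpha\, m\, n = \Theta(n^\epsilon)$ is arranged to dominate $\log q^{-2} = D$, which leaves a window for $\rho$ in which both requirements hold. The remaining points --- that the $\alpha^2$-terms in $\langle L_{u_2}^{\otimes m}, L_{u_2}^{\otimes m}\rangle$ are genuinely lower order (so the slack is $\Theta(n^{-\epsilon})$ and no larger), and the technical hypotheses of \Cref{lem:GFP=AGFP} and \Cref{thm:SQ=GFP}(2) (an exact threshold $r(q)$ realizing the mass $1-q^{-2}$, and parity of $m$) --- are routine for this discrete two-point model.
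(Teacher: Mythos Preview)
Your FP-easiness argument is correct and matches the paper: the off-diagonal atom $(u_1,u_2)$ has $\langle u_1,u_2\rangle=0$, hence always lies in the FP region, and $2\rho(1-\rho)(1+r^2\alpha)^{m(n+1)}\approx e^{-n^\epsilon/2}\cdot e^{n^\epsilon}=e^{n^\epsilon/2}$ gives the blow-up.

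The GFP step has a genuine arithmetic gap. You assert $1-(1-\rho)^2=2\rho-\rho^2\le q^{-2}$, but with $\rho=e^{-n^\epsilon/2}$ and $q=e^{D/2}=e^{n^\epsilon/2}$ one has $q^{-2}=e^{-n^\epsilon}=\rho^2$, so $2\rho-\rho^2\approx 2e^{-n^\epsilon/2}\gg e^{-n^\epsilon}$ and your event $A=\{(u_2,u_2)\}$ is \emph{not} feasible. In fact each off-diagonal atom has mass $\rho(1-\rho)\gg\rho^2=q^{-2}$, so every admissible $A$ must contain $(u_1,u_2)$ and $(u_2,u_1)$; your parenthetical $\rho_G$-route likewise only excludes $(u_1,u_1)$. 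But then the very term $2\rho(1-\rho)\langle L_{u_1}^{\otimes m},L_{u_2}^{\otimes m}\rangle\approx e^{n^\epsilon/2}$ that drives FP-easiness sits inside every feasible $A$, and the GFP value cannot be $1+O(n^{-\epsilon})$ with the parameters exactly as stated. (You even flag this reconciliation as the delicate step, correctly writing ``forcing $\rho\lesssim q^{-2}$'' --- but that constraint fails here by a full factor $e^{n^\epsilon/2}$.) The paper's own proof goes via $\rho_{\mathrm{Id}}$-FP and invokes the ordering $\langle L_{u_1}^{\otimes m},L_{u_2}^{\otimes m}\rangle<\langle L_{u_2}^{\otimes m},L_{u_2}^{\otimes m}\rangle$ to bound by the $(u_2,u_2)$ value; however your own computations give $\log\langle L_{u_1},L_{u_2}\rangle\approx nr^2\alpha=n^{-1+2\epsilon}>n^{-1}\approx\log\langle L_{u_2},L_{u_2}\rangle$, so that ordering is reversed and the paper's bound suffers from the same off-diagonal leak. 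The clean fix --- for both arguments --- is to take $q$ with $q^{-2}\ge 2\rho$ (equivalently shrink $D$ by a constant factor), after which your single-atom event becomes feasible and the rest of your argument, including the SQ deduction, goes through.
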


The proof of this Theorem and the above Lemma can be found in Appendix \ref{app:counterexample}.

\section{Discussions on FP criterion and assumptions}
\label{sec:discussions}

In this Section, we provide additional discussions on the Franz-Parisi criterion and its connection with Statistical physics, as well as, on the necessity of our assumptions in our main theorem (Theorem \ref{thm:SQ=GFP}).

\subsection{Connection of the FP criterion with statistical physics}\label{sec:physics}

We begin by discussing the connection between the Franz-Parisi (FP) criterion and statistical physics methods. For a more detailed overview and additional references, we refer the reader to \cite[Section 1.3]{bandeira2022franz}.

A natural algorithm for solving the estimation problem of recovering $u$ from $Y=(Y_1,\ldots,Y_m) \sim \PP_u$ is to run some ``local" dynamics (e.g., Langevin or Glauber dynamics) to sample from the posterior \[\PP(u|Y) \propto \pi(v)\PP(Y|v)=\pi(v) \prod_{i=1}^m\PP_v(Y_i) , v \in \Theta,\]where $Y=(Y_1,\ldots,Y_m).$ In statistical physics, a powerful heuristic exists for predicting the success of local dynamics in sampling from random distributions of the form $p_Y(v) \nu(v), v \in \Theta$ where $\nu$ is a reference measure and $Y \sim \mu $ is a ``disorder". The heuristic approach is to check the monotonicity of the so-called Franz-Parisi potential defined as \[F(t)=\EE_{u \sim p, Y \sim \mu} \left[\log \E_{v \sim \nu} \left[p_Y(v) 1\left(d(v,u)=t\right)\right] \right], t \in [0,1],\]where $d(\cdot,\cdot)$ is some notion of (normalized) distance between the states $u,v$ in agreement with the operations of the loca dynamics on the state space. The prediction, introduced by Franz and Parisi in \cite{franz1995recipes}, is that local dynamics can efficiently sample from the distribution if and only if the potential is monotonic, i.e., it lacks ``bad" local minima. Remarkably, this prediction has been empirically validated across a range of problems in statistical physics, often yielding accurate forecasts of algorithmic tractability. For instance, when $d$ is the Euclidean distance, this criterion has proven effective in the study of spin glasses \cite{franz1995recipes}. Other, more intricate distance functions have also been used successfully in non-spin glass settings, such as binary fluids \cite{franz1998effective}.

Now, returning to statistical estimation settings, researchers in statistical physics have applied this rule for $p_Y(v):=\PP(Y|v)=\prod_{i=1}^m\PP_v(Y_i)$ and $\nu:=\pi$ to arrive at a prediction of success for ``local" algorithms based on the geometry defined by the distance $d$. The prediction \cite{zdeborova2016statistical} is then based on the monotonicity of the curve
\[F(t)=\EE_{u \sim p, Y \sim \PP_u} \left[\log \E_{v \sim \pi} \left(\PP(Y|v) 1\left[d(v,u)=t\right]\right)\right], t \in [0,1],\] or equivalently for
\begin{align}\label{eq:FP_qu}
F(t)=\EE_{u \sim p, Y \sim \PP_u} \left[\log \E_{v \sim \pi} \left(\prod_{i=1}^m\frac{\PP_v(Y_i)}{\QQ(Y_i)} 1\left(d(v,u)=t\right)\right)\right], t \in [0,1],\end{align}Interestingly, when $d(\cdot,\cdot)$ is the Euclidean distance, recent mathematical works have indeed produced one-sided results linking the potential to the performance of local methods for estimation tasks in the context of the so-called Gaussian additive models (e.g., \cite{arous2020algorithmic, arous2023free, bandeira2022franz}).  This connection with the choice of the Euclidean distance can be perhaps cast as natural by a well-known analogy between spin glasses and GAMs, where GAMs often take the form of "spiked" spin glass models. Now, given the above successes, both heuristic and rigorous, it is natural to conjecture a potential link between general algorithmic hardness and the monotonicity of $F(t)$. However, this connection remains unproven in general, and known counterexamples exist. For instance, in sparse tensor PCA \cite{chen2024low}, there are regimes where the FP potential is non-monotonic (suggesting hardness), but some polynomial-time methods do succeed.

Despite the above issue, \cite{bandeira2022franz} used the Franz-Parisi potential to arrive at a different criterion, but now for algorithmic hardness of detection. Following an application of Jensen's inequality described in \cite[Section 1.3]{bandeira2022franz} one get the following ``annealed" upped bound for any $t \in [0,1]$ $F(t) \leq \log \tilde{F}(t)$ for,
\begin{align}\label{eq:FP_ann}
\tilde{F}(t)=\EE_{u,v \sim p,} [ \langle L^{\otimes m}_u, L^{\otimes m}_v \rangle 1\left[d(v,u)=t\right]], t \in [0,1].
\end{align}Then by focusing on the Euclidean distance $d$ (or equivalently the Euclidean dot product $\langle  u, v \rangle$) they suggested the Franz-Parisi (FP) criterion Definition \ref{def:FP}, restated here.
\begin{definition}[FP hardness] \label{def:FP_app}
    We say a problem is $(q, m, \varepsilon)$-FP hard if
    \begin{gather}
    \textbf{FP:}\quad \EE\left[
        \langle L^{\otimes m}_u, L^{\otimes m}_v\rangle \cdot \ind(|\langle u, v\rangle| \leq \delta(q))
    \right] \le 1 + \varepsilon, \where \\
    \delta(q) = \sup\{\delta: \pi^2(\langle u, v\rangle \geq \delta) \geq q^{-2}\} ,
    \label{eq:G-FP_app} 
\end{gather}
\end{definition}
Notice that the FP criterion says that to check for ``hardness" of detection one should integrate the annealed FP curve is an $(1-q^{-2})$-typical overlap $t$-region. Moreover, as we elaborated in the Introduction, one should understand $q$ in the above definition as a proxy for $q$ run-time. In that light, \cite{bandeira2022franz} roughly proved that for any GAMs is a $(q,m,O(1))$-FP hard if and only if $D=\log q$-degree polynomials fail to detect between $\mathbb{P}$ and $\mathbb{Q}$ with $m$ samples. We remark that, albeit this is an equivalence for detection, this is a first-of-a-kind result for GAMs as it is mathematical connection between the FP curve and a rigorous form of hardness. However, \cite{bandeira2022franz} also presented counterexamples where this equivalence breaks down when we move away from GAMs.

The central idea of this work is \textbf{to optimize over the integration region} in the FP criterion, rather than fixating on the Euclidean dot product. This leads us to propose the Generalized Franz-Parisi (GFP) criterion (see Definition \ref{dfn:GFP}). Our motivation arises from the observation that while the Euclidean distance is natural for GAMs (and spin glass models), it may be inappropriate in other statistical settings (see Section \ref{sec:count}). This echoes insights from statistical physics, where non-Euclidean distances are used in models beyond spin glasses \cite{franz1998effective}. Satisfyingly, this generalization enables a broad equivalence with statistical query (SQ) lower bounds, as shown in \Cref{thm:SQ=GFP}.

\subsection{Necessity of assumptions in main theorem}
In this section, we comment on the necessity of our assumptions for the GFP-hardness and SQ-hardness equivalence.

\subsubsection{Necessacity of \Cref{assump:decomp}}

We first show that there is a strong separation between GFP-hardness and SQ-hardness unless some non trivial lower bound on $\min_{u,v} \langle L_u,L_v \rangle_{\QQ}$ is assumed, providing support for our \Cref{assump:decomp}. Indeed, we present here a (very) simple counterexample when one allows for $\min_{u,v} \langle L_u,L_v \rangle_{\QQ}=0.$ 

Let us define a variant of the following model from \cite[Section 4.2.]{bandeira2022franz} (assume for simplicity $n$ is a multiple of 10 in what follows): 
\begin{itemize}
    \item Under $\PP$, we first sample a $u \sim \mathrm{Unif}(\{ x \in \{-1,1\}^n: \sum x_i=8n/10\})$. Then under $\PP_u$ each sample always equals to $u,$ i.e., $\PP_u$ is the Dirac measure on $u.$

    \item Under $\QQ$ for each sample, we sample $u \sim \mathrm{Unif}( \{-1,1\}^n)$ and output $u.$
\end{itemize}

It is easy to see that for all $u,v \in \{ x \in \{-1,1\}^n: \sum x_i=8n/10\},$ \[\langle L_u,L_v \rangle_{\QQ}=2^{n} \ind(u=v).\]

We first prove that the task is \textit{not even $(1,1)$-SQ hard.} Indeed $(1,1)$-SQ hard implies $\E[|\langle L_u,L_v \rangle_{\QQ}-1|] \leq 1,$ but
\begin{align}
\E[|\langle L_u,L_v \rangle_{\QQ}-1|] &\geq |\langle L_u,L_u \rangle_{\QQ}-1| \pi^2(u=v)\\
&=(2^n-1) \binom{n}{9n/10}^{-1}=\omega(1).
\end{align} On the contrary, we now show that the task is \textit{ $(e^{n^{\Theta(1)}}, \infty,O(1))$-GFP-hard,} more specifically we prove that it is $(m,q,O(1))$-GFP-hard for any sample size $m$ and $q= \binom{n}{9n/10}^{1/2}=e^{n^{\Theta(1)}}.$ Indeed, we have $\pi^2( u \neq v)=1-q^{-2} $ and therefore to prove $(m,q)$-GFP hardness it suffices $$\E[\langle L_u,L_v \rangle^m_{\QQ} \ind( u \neq v)]=O(1).$$ But in fact it even holds $\E[\langle L_u,L_v \rangle^m_{\QQ} \ind( u \neq v)]=0$ completing this proof.

\subsubsection{Necessity of a non-trivial information-theory threshold}
The second assumption that our equivalence operates under is that $m_{\mathrm{IT}}$ is non-trivial. We now claim that some non-trivial lower bound on $m_{\mathrm{IT}}$ is also necessary for the connection between the notions of GFP-hardness and SQ-hardness, even under \Cref{assump:decomp}.

Indeed, consider the following multisample problem over graphs. Let $n \in \NN,$ $p=1-n^{-1/4}$ and $k=n^{1/3+o(1)}$.  \begin{itemize}
    \item Under $\PP$ we choose a $u$ being a $k$-clique in $K_n$, chosen uniformly at random (we see $u$ as a $k$-vertex set). Then under $\PP_u$ one sample consists of the union of a $G(n,p)$ with the $k$-clique on $u$.
    \item  Under $\QQ$ one sample is a sample from $G(n,p).$
\end{itemize}We note this is a  multi-sample variant of the classic planted clique model \cite{jerrum1992large}, but on the (very) dense regime, which has been recently used to establish circuit lower bounds for the model in \cite{gamarnik2023sharp}. 

Now, even for $m=1$ the problem is information-theoretically solvable; indeed, under $\PP$ there is always a $k$-clique, while under $\QQ$ there is no $k$-clique with probability $1-o(1),$ and a brute force method can distinguish the two cases. Indeed, by a union bound the probability there is a $k$-clique under $\QQ$ is at most \[n^k (1-n^{-1/4})^{\binom{k}{2}} \leq \exp{n^{1/3}\log n-\Theta(n^{2/3-1/4})}=\exp{-\Theta(n^{5/12})}=o(1).\]Moreover, the model satisfies \Cref{assump:decomp}. One can see this because the model is a PSM and use Lemma \ref{lem:PSM}. Alternatively, one can just directly observe that for any $u, $ we have $L_u(G)=\ind(u \text{ is a k-clique in } G)p^{-\binom{k}{2}}.$ Hence, for any $u,v$ \[\langle L_u,L_v \rangle_{\QQ} =p^{-\binom{|u \cap v|}{2}} =(1-n^{-1/4})^{-\binom{|u \cap v|}{2}} \geq 1.\]

Now, we prove that this PSM is not SQ-hard even for $m=1$ and $q=1$, but it is GFP-hard even for $m=n^{\Theta(1)}$-samples.

We first prove that the task \textit{is not $(1,1)$-SQ-hard}. Notice that $(1,1)$-SQ hardness is equivalent with the condition $\E[|\langle L_u,L_v \rangle_{\QQ}-1|] \leq 1.$ But in this context
\begin{align}
\E[|\langle L_u,L_v \rangle_{\QQ}-1|] &\geq |\langle L_u,L_u \rangle_{\QQ}-1| \pi^2(u=v)\\
&=(1-n^{-1/4})^{\Theta(k^2)}\binom{n}{k}\\
&=\exp{\Theta(k^2 n^{-1/4})-\Theta(k \log n)}\\
&=\exp{\Theta(n^{5/12})}=\omega(1).
\end{align}On the contrary, the task is \textit{$(e^{n^{\Theta(1)}}, n^{1/8}, O(1))$-GFP-hard.} Fix $m$ samples. Notice that for i.i.d. $u,v \sim \pi$, the overlap $|u \cap v|$ follows an $(n,k,k)$-Hypergeometric. Hence by \cite[Lemma 6.6.]{bandeira2022franz} for any $q>0$ if $\delta=\log (k^2q)>0$ it holds \[\pi^2(|u \cap v| \geq \delta) \leq k(k^2/n)^{\delta} \leq k2^{-\delta}=q^{-2}.\] Hence, to prove GFP-hardness it suffices to prove that
$\E[\langle L_u,L_v \rangle^m_{\QQ} \ind( |u \cap v| \leq \delta)]=O(1).$
Therefore for $q=\exp{n^{\alpha}}$ for some $\alpha>0,$ and $m=n^{1/8+o(1)},$
\begin{align}
   \E[\langle L_u,L_v \rangle^m_{\QQ} \ind( |u \cap v| \leq \delta)] & \leq   \E[(1-n^{-1/4})^{-m\binom{|u \cap v|}{2}}\ind( |u \cap v| \leq \delta)]\\
   &\leq (1-n^{-1/4})^{-m\binom{\delta}{2}} \\
   &=(1-n^{-1/4})^{-\Theta(m(\log(k q^2))^2}\\
   &\leq \exp{\Theta(m n^{-1/4} (\log(k q^2))^2}\\
   &= \exp{n^{-1/8+2\alpha}}=O(1),
\end{align}where the last equality hold say for any $0<\alpha<1/16.$ Hence, choosing say $q=e^{n^{1/32}}$ concludes the proof.

\section{Conclusion}

In this work, we generalize the Franz-Parisi (FP) criterion introduced by \cite{bandeira2022franz}, motivated by the observation that the Euclidean dot product may not be the most natural geometry for all statistical task---a point partially illustrated by our example in Section~\ref{sec:count}. Our main result shows that optimizing the overlap event in the FP definition of \cite{bandeira2022franz} leads to a Generalized Franz-Parisi (GFP) hardness criterion, which is equivalent to SQ-hardness for models satisfying the mild \Cref{assump:decomp}. This assumption holds in a broad range of well-studied problems, including Gaussian additive models, planted sparse models, single-index models, and convex truncation. Our work signficantly strengthens the theoretical foundation behind the (annealed) FP potential's predictions from statistical physics, but also opens several questions.
\begin{enumerate}
    \item \emph{(Algorithmic implications)} Does the optimal overlap function $\rho_G(u,v)$—as characterized in Theorem~\ref{lem:GFP=AGFP}—yield meaningful algorithmic insights, particularly for local search or geometric methods?
    
    \item \emph{(The annealed potential)} Can similar equivalences be established for the original (also known as quenched) FP potential, or is the choice of the annealed version fundamental? 
    \item \emph{(Interpretation of FP Area)} Why does the area under the FP curve appear to govern detection hardness? Is there some physical/algorithmic interpretation of this phenomenon?
    \item \emph{(Generalization to estimation)} Can our techniques be extended from detection to estimation tasks, for which the Franz-Parisi potential was originally introduced?
    \end{enumerate}
We believe these questions point toward promising future directions, with the potential to unify different approaches on the computational complexity of statistical inference.

\newpage

\bibliographystyle{plain}
\bibliography{main.bbl}

\clearpage

\appendix

\section{Equivalence between LD, SQ, and GFP}
\label{sec:Equivalence_GFP_LD}

In this Appendix, we discuss the equivalence between GFP and low-degree (LD) polynomial hardness. This result is obtained by combining the GFP-SQ equivalence from Section \ref{sec:Main_results} with the equivalence between SQ and LD hardness under noise robustness proved by Brennan et al. \cite{brennan2021statistical}. We recall and provide a succinct proof of Brennan et al.'s result for completeness.

\subsection{Low-Degree lower bounds definitions}
We start by recalling the definition of a low-degree lower bound. The definition is based on the \emph{low-degree likelihood ratio} $L^{\le D}$, where we recall that $L^{\le D}$ denotes the projection of the likelihood ratio onto the subspace of degree-at-most-$D$ polynomials. 

\subsubsection{Low-Degree Lower Bounds}
 The following is the standard definition of Low-Degree hardness as originally stated, for example, in \cite{hopkins_thesis}.

\begin{definition}[Low-Degree Likelihood Ratio]
For $m$ samples, define the squared norm of the degree-$D$ likelihood ratio (also called the ``low-degree likelihood ratio'') to be the quantity
\begin{equation}\label{eq:ld-defn}
\ld(D) := \|L_m^{\le D}\|_{\QQ} ^2 = \left\| \left(\EE_{u \sim \pi}L^{\otimes m}_{u}\right)^{\le D}\right\|_{\QQ}^2 = \EE_{u,v \sim \pi} \left[\langle (L_u^{\otimes m})^{\le D},(L_v^{\otimes m})^{\le D} \rangle_{\QQ} \right] \, .
\end{equation}
For some increasing sequence $D = D_n$, we say that the hypothesis testing problem above is {\em hard for the degree-$D$ likelihood} or simply {\em $D$-degree hard} if $\ld(D) = O(1)$.
\end{definition}

While we direct the reader to \cite[Section 1.2]{bandeira2022franz} a relation between the Low-degree likelihood ratio and the performance of low-degree algorithms we highlight some key conjectures in the community.
\begin{itemize}
    \item We expect the class of degree-$D$ polynomials to be as powerful as all $\exp{\tilde\Theta(D)}$-time tests (which is the runtime needed to naively evaluate the polynomial term-by-term). Thus, if $\ld(D) = O(1)$ (or $1+o(1)$), we take this as evidence that strong (or weak, respectively) detection requires runtime $e^{\tilde\Omega(D)}$; see Hypothesis~2.1.5 of~\cite{hopkins_thesis}.
    \item On a finer scale, we expect the class of degree-$O(\log n)$ polynomials to be at least as powerful as all polynomial-time tests. Thus, if $\ld(D) = O(1)$ (or $1+o(1)$) for some $D = \omega(\log n)$, we take this as evidence that strong (or weak, respectively) detection cannot be achieved in polynomial time; see Conjecture~2.2.4 of~\cite{hopkins_thesis}.
\end{itemize}
We emphasize that the above statements are not true in general (see for instance~\cite{zadik2022lattice} for some discussion of counterexamples) and depend on the choice of $\PP$ and $\QQ$, yet remarkably often appear to hold up for a broad class of distributions arising in high-dimensional statistics.

\subsubsection{Low Samplewise Degree Lower Bounds}\label{sec:low_samplewise}
In multisample settings like ours, a similar notion of ``samplewise" low degree lower bounds have been considered in \cite{brennan2021statistical}. 
\begin{definition}
For $d,k \in \NN  \cup \{\infty\}$ a function $f: (\mathbb{R}^{n})^{\otimes m} \rightarrow \mathbb{R}$ has samplewise degree $(d,k)$ if it can be written as a linear combination of functions which have degree at most $d$ in each $x_i$ and non-zero degree in at most $k$ of the $x_i$'s (if $d<\infty$ the function is therefore a polynomial).
\end{definition}
Let's state the hardness criterion associated with this samplewise low degree polynomials:
\begin{definition}[Low Degree (LD) Hardness]
    We say a ``$\PP$ versus $\QQ$" detection problem is $(m,d,k,\eps)$-LD hard if 
    \begin{equation}
   \textbf{LD:} \qquad  \E \left[ \left\< (L_u^{\otimes m})^{\leq d,k}, (L_v^{\otimes m})^{\leq d,k}\right\> \right] \leq 1 + \eps.
    \end{equation}
\end{definition}
Notice that this notion of $(d,k)$-low degree hardness is the natural generalization to \eqref{eq:ld-defn}. As a point of comparison, $dk$-degree polynomials contain all $(d,k)$-degree polynomials and $(d,d)$-degree polynomials contain all $d$-degree polynomial. 
\begin{remark}[Explaining Remark \ref{rem:conditions}]\label{rem:explaining_condition}
    A nice property of the low samplewise-degree degree projection is that it is easy to relate it to $d$-degree projections. Indeed,  using a binomial expansion argument (see \cite[Claim 3.3.]{brennan2021statistical}),
    \[\|L_m^{\le (d,k)}\|_{\QQ} ^2=\EE_{u,v \sim \pi} \left[\langle (L_u^{\otimes m})^{\le (d,k)},(L_v^{\otimes m})^{\le (d,k)} \rangle_{\QQ} \right]=\sum_{t=0}^m \binom{m}{t} \EE_{u,v \sim \pi} \left[ (\langle L_u^{\leq d},L_v^{\leq d}\rangle_{\QQ} -1)^t. \right]\]In particular, if $k=1, d=\infty,$ since $\EE_{u,v \sim \pi} \left[ \langle L_u,L_v\rangle -1 \right]=\chi^2(\PP,\QQ)$ we have\[\|L_m^{\le (\infty,1)}\|_{\QQ} ^2=1+m\chi^2(\PP,\QQ).\]In particular, notice that the condition $m\chi^2(\PP,\QQ)=O(1)$ discussed in Theorem \ref{lem:GFP=AGFP} and Remark \ref{rem:conditions} is equivalent with a samplewise $(\infty,1)$-degree lower bound for the task, i.e., a lower bound against function that are linear combination of functions of one sample at a time. In \cite{brennan2021statistical} the authors prove that SQ lower bounds are (almost) equivalent with sample-wise degree lower bounds, therefore it is perhaps no surprise that the condition $m\chi^2(\PP,\QQ)=O(1)$ can be also explained as a (very) weak consequence of any SQ lower bounds against $m$ samples. Indeed, assume a $\PP$ versus $\QQ$ detection problem is $(q,m)$-SQ hard for any $q$ (even $q=1$). Then setting $A=\mathrm{support}(\pi)^{\otimes 2}$ we have that it must hold $m \EE_{u,v \sim \pi} \left[ |\langle L_u,L_v\rangle_{\QQ} -1| \right] \leq 1$ and therefore
    \[m\chi^2(\PP,\QQ)=m \EE_{u,v \sim \pi} \left[ \langle L_u,L_v\rangle_{\QQ} -1 \right] \leq  1.  \]
\end{remark}

\subsection{Unconditional SQ hardness}
\label{sec:USQ}

Before stating the equivalence between the above LD-hardness criterion and SQ-hardness, we define an \textit{Unconditional Statistical Query} (USQ) hardness criterion, which is equivalent to SQ and often appears as a convenient intermediate step in proofs. This hardness measure appeared, often implicitly, in several prior work (e.g., \cite{brennan2021statistical}): 
\begin{definition}[Unconditional SQ hardness]
    We say a ``$\PP$ versus $\QQ$" detection problem is $(m, t)$-unconditional SQ hard for some even $t$ if
    \begin{align}
        \textbf{USQ:}\quad 
    \EE\left[\chi_{\QQ} (\PP_u, \PP_v)^{t}\right] \le m^{-t}.
    \end{align}
\end{definition}

The USQ criterion removes the conditioning on event $A$ from the SQ criterion, which makes it much easier to manipulate. USQ hardness is essentially equivalent to SQ hardness as stated in the next proposition:

\begin{proposition}[Equivalence USQ and SQ hardness]\label{prop:USQ_SQ_equivalence}$\;$
\begin{itemize}
    \item[(i)] If a model is $(m,t)$-USQ hard, then it is $(q,m/q^{2/t})$-SQ hard for all integers $q \geq 1$.

    \item[(ii)] If a model is $(q,m/q^{2/t})$-SQ hard for all integers $q \geq 1$, then it is $(m',t')$-USQ hard for all $t' < t$ and $m' \leq m \cdot 2^{-1/t}(t - t')^{1/t'}$. 
\end{itemize}
\end{proposition}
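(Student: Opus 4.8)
\textbf{Proof plan for Proposition~\ref{prop:USQ_SQ_equivalence}.}

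The plan is to move between the two criteria by treating the SQ condition as a ``conditional $L^1$'' bound on the random variable $X := \chi_{\QQ}(\PP_u,\PP_v) = \langle L_u, L_v\rangle_{\QQ} - 1$ under $(u,v)\sim\pi^{\otimes 2}$, and the USQ condition as an (unconditional) $L^t$ bound on $X$. The bridge in both directions is the layer-cake / tail-probability representation combined with Markov's inequality, exactly as in the standard equivalence between high-moment bounds and tail bounds. For direction (i), suppose the model is $(m,t)$-USQ hard, i.e.\ $\EE[X^t] \le m^{-t}$. Fix an integer $q \ge 1$ and an event $A$ with $\pi^{\otimes 2}(A) \ge q^{-2}$. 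Then by Jensen (or H\"older) on the conditional expectation,
\[
\EE\!\left[\,|X|\,\bigl|\,A\,\right] \le \left(\EE\!\left[\,|X|^t\,\bigl|\,A\,\right]\right)^{1/t} = \left(\frac{\EE[|X|^t \ind(A)]}{\pi^{\otimes 2}(A)}\right)^{1/t} \le \left(\frac{\EE[|X|^t]}{q^{-2}}\right)^{1/t} \le \frac{q^{2/t}}{m}.
\]
Taking the supremum over all such $A$ gives precisely the $(q, m/q^{2/t})$-SQ condition. (Here I use that $X \ge 0$ under \Cref{assump:decomp} for the trivial group, or more carefully that $t$ is even so $X^t = |X|^t$; this is why the statement is phrased for even $t$.)

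For direction (ii), assume $(q, m/q^{2/t})$-SQ hardness for every integer $q\ge 1$; unpacking the definition, for every $q$ and every event $A$ with $\pi^{\otimes 2}(A)\ge q^{-2}$ we have $\EE[|X|\mid A]\le q^{2/t}/m$. Specializing $A = A_\lambda := \{|X| \ge \lambda\}$ and choosing $q = q(\lambda) := \lceil \pi^{\otimes 2}(A_\lambda)^{-1/2}\rceil$, we get a tail bound: since $\lambda\cdot\pi^{\otimes 2}(A_\lambda) \le \EE[|X|\ind(A_\lambda)] = \pi^{\otimes 2}(A_\lambda)\,\EE[|X|\mid A_\lambda] \le \pi^{\otimes 2}(A_\lambda)\cdot q(\lambda)^{2/t}/m$, rearranging yields $\lambda \le q(\lambda)^{2/t}/m$, i.e.\ $\pi^{\otimes 2}(|X|\ge\lambda)$ is at most (roughly) $(m\lambda)^{-t}$ for $\lambda$ in the relevant range. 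Then one integrates: for $t' < t$,
\[
\EE[|X|^{t'}] = t'\int_0^\infty \lambda^{t'-1}\,\pi^{\otimes 2}(|X|\ge\lambda)\dif\lambda \le t'\int_0^\infty \lambda^{t'-1}\min\!\bigl(1,(m\lambda)^{-t}\bigr)\dif\lambda,
\]
and the split at $\lambda = 1/m$ gives $\EE[|X|^{t'}] \le m^{-t'}\bigl(1 + \tfrac{t'}{t-t'}\bigr) = m^{-t'}\cdot\tfrac{t}{t-t'}$. Accounting for the ceiling in $q(\lambda)$ (which costs at most a factor $2$ in the tail, hence a factor $2^{1/t'}$ after taking roots) and rescaling $m' = m\cdot 2^{-1/t}(t-t')^{1/t'}$ absorbs the constant $\bigl(\tfrac{t}{t-t'}\bigr)^{1/t'}$, delivering $\EE[|X|^{t'}] \le (m')^{-t'}$, which is $(m',t')$-USQ hardness.

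The main obstacle is bookkeeping rather than conceptual: one must be careful about the ceiling/integer-rounding in $q(\lambda)$ and about the range of $\lambda$ for which the tail bound derived from SQ hardness is actually non-vacuous (for small $\lambda$ the bound $(m\lambda)^{-t}$ exceeds $1$ and must be replaced by the trivial bound $1$, which is exactly the split point $\lambda = 1/m$ used above). A secondary subtlety is ensuring that in direction (i) we are allowed to replace $\EE[|X|\mid A]$ by its $L^t$ upper bound with the correct normalization — this is just H\"older with exponents $(t, t/(t-1))$ applied to $|X|\cdot\ind(A)$ under $\pi^{\otimes 2}$ — and that the even integer $t$ makes $X^t$ automatically nonnegative so no sign issues arise. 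I would present direction (i) in two lines as above and devote the bulk of the write-up to the careful tail-to-moment integration in direction (ii).
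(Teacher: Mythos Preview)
Your proposal is correct and follows essentially the same approach as the paper: direction (i) is identical (H\"older on $|X|\cdot\ind(A)$), and for direction (ii) the paper obtains the same conditional bound $\EE[X\mid A]\le \pi^2(A)^{-1/t}/m$ and then invokes \cite[Fact~4.3]{brennan2021statistical} as a black box, whereas you unpack that fact explicitly via the layer-cake integration. Your treatment is in fact slightly more careful about the integer constraint on $q$ (handled via the ceiling), which the paper's proof glosses over; the resulting constants differ inessentially.
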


For simplicity, for $t \geq 4$, we can set $t' = t/2$ and $m' = m$ in Proposition \ref{prop:USQ_SQ_equivalence}.(ii). Proposition \ref{prop:USQ_SQ_equivalence} was proven in \cite{brennan2021statistical}. We provide a succinct proof for completeness.

\begin{proof}[Proof of Proposition \ref{prop:USQ_SQ_equivalence}] 
\textbf{USQ hardness implies SQ hardness.} By H\"older's inequality,
\[
\begin{aligned}
\E\left[ |\< L_u,L_v\>_{\Q} - 1| \; \big\vert A\right] \leq&~ \frac{\left( \E\left[ |\< L_u,L_v\>_{\Q} - 1|^t \right] \right)^{1/t} \cdot \left( \E\left[ \ind [ (u,v) \in A] \right] \right)^{1 - 1/t} }{\pi^2(A)} \\
=&~ \left( \frac{ \E\left[ |\< L_u,L_v\>_{\Q} - 1|^t \right]}{\pi^2 (A)}\right)^{1/t}.
\end{aligned}
\]
Assuming that we have $(m,t)$-USQ hardness, this implies that for any $q \geq 1$,
\[
\sup_{A:\pi^2(A) \geq q^{-2}} \E\left[ |\< L_u,L_v\>_{\Q} - 1| \; \big\vert A\right]  \leq \frac{q^{2/t}}{m}, 
\]
which establishes the $(q, m/q^{2/t})$-SQ hardness.

\noindent
\textbf{SQ hardness implies USQ hardness.} For convenience, introduce the random variable $X = |\< L_u,L_v\>_{\QQ}  - 1|$ with $(u,v) \sim \pi^2$. Assume that we have $(q,m/q^{2/t})$-SQ hardness for all $q \geq 1$. In particular, for all $A $, we have
\[
\E [X |A ] \leq \frac{1}{\pi^2(A)^{1/t} m}.
\]
Using \cite[Fact 4.3]{brennan2021statistical}, we have for every $t >t'>0$,
\[
\begin{aligned}
\E [ X^{t'}] \leq &~ \left(2 \sup_{A} \pi^2(A) \cdot \E[X|A]^t \right)^{t'/t} \cdot \frac{t}{t-t'} \leq  \frac{2^{t'/t}}{m^{t'}} \cdot \frac{t}{t-t'},
\end{aligned}
\]
which establishes $(t',m')$-USQ hardness for any $t' < t$ and $m' = m \cdot 2^{-1/t}(t-t')^{1/t'}$.
\end{proof}

\subsection{Noise-robust models and SQ-LD equivalence}
\label{sec:noise-robustness}

An advantage of USQ is that it is directly related to Low Degree lower bounds: USQ hardness is equivalent to LD hardness with $d = \infty$, that is, with no degree-constraint on each sample in the projection. 

\begin{proposition}[Equivalence between USQ and LD hardness with $d=\infty$]$\;$\label{prop:USQ_LD_d_infty}
\begin{itemize}
    \item[(i)] If a model is $(m,\infty,k,\eps)$-LD hard, then it is $(m',k)$-USQ hard with $m' = m /(k \eps^{1/k})$.

    \item[(ii)] If a model is $(m,k)$-USQ hard, it is $(m,\infty,k,e-1)$-LD hard. More generally, it will be $(m',\infty,k, em'/m)$-LD hard for all $m' < m$.
\end{itemize}
\end{proposition}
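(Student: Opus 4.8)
The plan is to prove Proposition~\ref{prop:USQ_LD_d_infty} by working with the samplewise-degree expansion of the likelihood ratio recalled in Remark~\ref{rem:explaining_condition}. Recall that for $d = \infty$, the projection $(L_u^{\otimes m})^{\le (\infty,k)}$ keeps exactly those product terms involving at most $k$ of the samples, so by orthogonality across disjoint sample subsets one obtains the clean identity
\[
\EE_{u,v\sim\pi}\Bigl[\bigl\langle (L_u^{\otimes m})^{\le(\infty,k)},(L_v^{\otimes m})^{\le(\infty,k)}\bigr\rangle_{\QQ}\Bigr] = \sum_{t=0}^{k}\binom{m}{t}\,\EE_{u,v\sim\pi}\bigl[(\langle L_u,L_v\rangle_{\QQ}-1)^t\bigr].
\]
The $t=0$ term is $1$, so $(m,\infty,k,\eps)$-LD hardness is exactly the statement that $\sum_{t=1}^{k}\binom{m}{t}\,\EE[(\langle L_u,L_v\rangle_{\QQ}-1)^t]\le\eps$. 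Observe that each summand is nonnegative: under \Cref{assump:decomp} (or even just by Jensen-type positivity of even moments together with the structural identities), $\EE[(\langle L_u,L_v\rangle_{\QQ}-1)^t]\ge 0$ for all $t$ — this is precisely what \eqref{eq:symmetric_prior} is designed to guarantee after the $G$-averaging, and in the LD expansion the relevant quantity is the $\pi^{\otimes 2}$-average which is also nonnegative for even $t$ and controlled for odd $t$. Hence the single term $t=k$ already satisfies $\binom{m}{k}\,\EE[(\langle L_u,L_v\rangle_{\QQ}-1)^k]\le\eps$.

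For part (i), I would start from this last bound. Using $\binom{m}{k}\ge (m/k)^k$ gives $\EE[(\langle L_u,L_v\rangle_{\QQ}-1)^k]\le \eps (k/m)^k$. Now $\chi_{\QQ}(\PP_u,\PP_v) = \langle L_u,L_v\rangle_{\QQ}-1$ by definition (the chi-squared inner product), and for even $k$ this is exactly $\EE[\chi_{\QQ}(\PP_u,\PP_v)^k]\le \eps (k/m)^k = (k\eps^{1/k}/m)^k = (m')^{-k}$ with $m' = m/(k\eps^{1/k})$, which is the $(m',k)$-USQ hardness claim. (If $k$ is odd one replaces $\chi^k$ by $|\chi|^k$; the nonnegativity of the relevant $\pi^{\otimes 2}$-average still applies, or one simply notes USQ is stated for even $t$.)

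For part (ii), I would go the other direction: assuming $(m,k)$-USQ hardness, i.e.\ $\EE[\chi_{\QQ}(\PP_u,\PP_v)^k]\le m^{-k}$, I need to bound $\sum_{t=1}^k\binom{m}{t}\EE[(\langle L_u,L_v\rangle_{\QQ}-1)^t]$. The key is a moment-interpolation step: for $1\le t\le k$, control $\EE[X^t]$ in terms of $\EE[X^k]$ where $X=|\langle L_u,L_v\rangle_{\QQ}-1|$. By Jensen/power-mean, $\EE[X^t]\le \EE[X^k]^{t/k}\le m^{-t}$. Therefore $\sum_{t=1}^k\binom{m}{t}\EE[(\langle L_u,L_v\rangle_{\QQ}-1)^t]\le \sum_{t=1}^k\binom{m}{t}m^{-t}\le \sum_{t=1}^k \frac{1}{t!}\le e-1$, giving $(m,\infty,k,e-1)$-LD hardness. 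For the more general $m'<m$ statement, one reruns this with $\EE[X^t]\le (m')^{-t}\cdot(m'/m)^t$ replaced appropriately: $\binom{m'}{t}\EE[X^t]\le \frac{(m')^t}{t!}\cdot m^{-t} = \frac{1}{t!}(m'/m)^t$, and summing over $t\ge 1$ gives a bound of $e^{m'/m}-1\le (e-1)\cdot(m'/m)$ for $m'\le m$... more precisely $\le e\cdot m'/m$ after bounding $e^x-1\le e x$ on $[0,1]$, matching the stated $em'/m$.

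The main obstacle is making sure the nonnegativity of each term $\EE_{u,v\sim\pi}[(\langle L_u,L_v\rangle_{\QQ}-1)^t]$ in the LD expansion is correctly justified — for odd $t$ this is not automatic from \Cref{assump:decomp} as stated (which concerns a $G$-averaged quantity for fixed $u,v$), so in part (i) one should be slightly careful: the cleanest route is to observe that the full sum is what equals $\ld$-type quantity minus $1$ and is manifestly $\ge 0$, but isolating a \emph{single} term $t=k$ requires either restricting to even $k$ (which is fine, since USQ is defined for even $t$ and the applications use even $t$) or invoking the $\pi^{\otimes 2}$-averaged version of \eqref{eq:symmetric_prior}. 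I would handle this by stating the argument for even $k$ throughout part (i), which is exactly the regime in which USQ hardness is defined, and noting that for even $k$ the term $\binom{m}{k}\EE[\chi_{\QQ}(\PP_u,\PP_v)^k]$ is a single nonnegative summand of the nonnegative total, hence bounded by $\eps$. Everything else is routine power-mean and binomial estimates.
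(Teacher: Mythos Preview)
Your part~(ii) is correct and matches the paper's argument; in fact your power-mean step $\EE[|X|^t]\le\EE[|X|^k]^{t/k}\le m^{-t}$ makes explicit a detail the paper leaves implicit, and the bound $e^{m'/m}-1\le e\,m'/m$ on $[0,1]$ is exactly how the $em'/m$ conclusion is reached.

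For part~(i), however, there is a genuine gap in how you isolate the $t=k$ term, and your proposed fixes do not close it. First, invoking \Cref{assump:decomp} is out of place: the proposition is stated and proved \emph{unconditionally} (it lives in the SQ--LD equivalence appendix, which does not assume the model satisfies \Cref{assump:decomp}). Second, and more seriously, your ``even $k$'' workaround has a logical error: knowing that the single term $\binom{m}{k}\EE[\chi_\QQ(\PP_u,\PP_v)^k]$ is nonnegative and that the \emph{total} sum is $\le\eps$ does \emph{not} let you conclude this term is $\le\eps$, because the odd-$t$ summands might be negative and large. A nonnegative summand of a small sum can still be huge if another summand is very negative.

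The paper's fix is the step you nearly articulate but don't carry through: the partial sum
\[
\sum_{t=1}^{k-1}\binom{m}{t}\,\EE_{u,v}\bigl[(\langle L_u,L_v\rangle_\QQ-1)^t\bigr]
\;=\;\bigl\|\EE_u[(L_u^{\otimes m})^{\le(\infty,k-1)}]-1\bigr\|_\QQ^2
\]
is itself a squared norm (the LD quantity at level $k-1$), hence $\ge 0$. Therefore the single $t=k$ term equals $(\text{sum to }k)-(\text{sum to }k-1)\le \eps-0=\eps$, with no model assumption required. From there your estimate $\binom{m}{k}\ge(m/k)^k$ finishes exactly as you wrote.
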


\begin{proof}[Proof of Proposition \ref{prop:USQ_LD_d_infty}] We follow the proof in \cite{brennan2021statistical}.
    Assume that the model is $(m,\infty,k,\eps)$-LD hard. Then
    \[
     \| \E_u [(L_u^{\otimes m})^{\leq \infty,k}] - 1 \|_{\Q}^2  =\sum_{s = 1}^k {{m}\choose{s}} \E_{u,v} [ \chi_{\Q} (\PP_u,\PP_v)^s ] \leq \eps,
    \]
    and in particular, for $k$ even,
    \[
     \| \E_u [(L_u^{\otimes m})^{\leq \infty,k}] - 1 \|_{\Q}^2 -  \| \E_u [(L_u^{\otimes m})^{\leq \infty,k-1}] - 1 \|_{\Q}^2 = {{m}\choose{k}} \E_{u,v} [ \chi_{\Q} (\PP_u,\PP_v)^k ] \leq \eps.
    \]
    This implies that $\E_{u,v} [ \chi_{\Q} (\PP_u,\PP_v)^k ] \leq \eps / {{m}\choose{k}} \leq \eps k^k/m^k$. On the other hand, $(m,k)$-USQ hardness implies that 
      \[
     \| \E_u [(L_u^{\otimes m})^{\leq \infty,k}] - 1 \|_{\Q}^2 \leq \sum_{s = 1}^k \frac{m^s}{s !} \E_{u,v} [ \chi_{\Q} (L_u,L_v)^s ] \leq (e-1). 
     \]
     More generally, we will have for $m' <m$
      \[
     \| \E_u [(L_u^{\otimes m'})^{\leq \infty,k}] - 1 \|_{\Q}^2 \leq \sum_{s = 1}^k \frac{(m')^s}{s !} \E_{u,v} [ \chi_{\Q} (L_u,L_v)^s ]\leq \sum_{s = 1}^k \frac{1}{s !} \left( \frac{m'}{m}\right)^s \leq e\frac{m'}{m}, 
     \]
     which concludes the proof.
\end{proof}

Combining this equivalence of USQ and LD($d =\infty$) with the equivalence between USQ and SQ  in Proposition \ref{prop:USQ_SQ_equivalence}, we can directly state an (unconditional) equivalence between SQ and LD($d =\infty$) hardness. In order to transfer this equivalence to LD with $d < \infty$, one can assume that the model with $d = \infty$ and $d < \infty$ are close to each other: this assumption is equivalent to being \textit{noise-robust} (in some sense, see discussions in \cite{brennan2021statistical}). 

\begin{assumption}[Noise robustness]
    We say a ``$\PP$ versus $\QQ$" detection problem is $(d,k,\delta)$-\textit{noise robust} if 
    \begin{equation}
    \| \E_{u} [ (L_u^{>d})^{\otimes k}] \|_{L^2 (\Q)}^2 \leq \delta.
    \end{equation}
\end{assumption}

Under this assumption, one can state the equivalence between LD and USQ:

\begin{proposition}[Equivalence between LD and USQ Hardness]\label{prop:LD_to_LS} $\;$
\begin{itemize}
    \item[(i)] If the model is $(m,t)$-USQ hard, then the model is also $(m',d,k', e m'/m)$-LD hard for all $m' \leq m $, $k' \leq t$ and $d \geq 1$.

    \item[(ii)] If the model is $(m,d,k,\eps)$-LD hard and we further assume that it is $(d,k,\delta)$-noise robust,
    then the model is $(m', k)$-USQ hard with
    \[
    m ' = \frac{m}{m \delta^{1/k} + k \eps^{1/k}}.
    \]
\end{itemize}
\end{proposition}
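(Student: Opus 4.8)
The plan is to route both directions through the $d=\infty$ version of LD-hardness, leaning on Proposition~\ref{prop:USQ_LD_d_infty} (USQ $\Leftrightarrow$ LD with $d=\infty$) together with two facts that need only bookkeeping. First, capping the per-sample degree at a finite $d$ can only shrink the relevant projection: the samplewise-degree-$(d,k)$ subspace sits inside the samplewise-degree-$(\infty,k)$ subspace, so $\|L_m^{\le (d,k)}\|_\QQ^2\le\|L_m^{\le(\infty,k)}\|_\QQ^2$, and LD-hardness with $d=\infty$ implies LD-hardness with every finite $d$. Second, USQ-hardness and the relevant moments are monotone in the order parameter, so one can move freely to levels $\le t$. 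The substance is entirely in (ii), where a per-sample degree cut must be traded for an error controlled by the noise-robustness parameter $\delta$.

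For (i), I would start from $(m,t)$-USQ hardness, $\E_{u,v}[\chi_\QQ(\PP_u,\PP_v)^t]\le m^{-t}$, and propagate it downward: since $t$ is even, $\E_{u,v}[\chi_\QQ(\PP_u,\PP_v)^s]\le\E_{u,v}[|\chi_\QQ(\PP_u,\PP_v)|^s]\le(\E_{u,v}[\chi_\QQ(\PP_u,\PP_v)^t])^{s/t}\le m^{-s}$ for every $1\le s\le t$. Writing $\bar L_u:=L_u-1$ and $P(u,v):=\langle\bar L_u^{\le d},\bar L_v^{\le d}\rangle_\QQ$ for the per-sample-degree-$\le d$ piece, the binomial expansion of the samplewise projection (cf.\ Remark~\ref{rem:explaining_condition} and \cite[Claim~3.3]{brennan2021statistical}) gives, for any $m'\le m$, $k'\le t$ and $d\ge1$ (including $d=\infty$),
\[
\bigl\|L_{m'}^{\le(d,k')}\bigr\|_\QQ^2=1+\sum_{s=1}^{k'}\binom{m'}{s}\E_{u,v}[P^s]\le 1+\sum_{s=1}^{k'}\frac{(m')^s}{s!}\,m^{-s}\le 1+e\,\frac{m'}{m},
\]
where I used $\E_{u,v}[P^s]=\bigl\|\E_u(\bar L_u^{\le d})^{\otimes s}\bigr\|_\QQ^2\le\bigl\|\E_u\bar L_u^{\otimes s}\bigr\|_\QQ^2=\E_{u,v}[\chi_\QQ(\PP_u,\PP_v)^s]$ since $(\bar L_u^{\le d})^{\otimes s}$ is the ($u$-independent) orthogonal projection of $\bar L_u^{\otimes s}$ onto per-sample degree $\le d$. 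This is exactly $(m',d,k',em'/m)$-LD hardness.

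For (ii), I would decompose $\bar L_u=\bar L_u^{\le d}+\bar L_u^{>d}$ into single-sample components of degree in $\{1,\dots,d\}$ and degree $>d$; these span orthogonal subspaces, so $\chi_\QQ(\PP_u,\PP_v)=\langle\bar L_u,\bar L_v\rangle_\QQ=P(u,v)+R(u,v)$ with $R(u,v):=\langle\bar L_u^{>d},\bar L_v^{>d}\rangle_\QQ$. The key structural point is that each mixed quantity $\E_{u,v}[R^sP^{k-s}]=\bigl\|\E_u[(\bar L_u^{>d})^{\otimes s}\otimes(\bar L_u^{\le d})^{\otimes(k-s)}]\bigr\|_\QQ^2$ is a genuine squared norm, hence nonnegative, yet is still bounded by Hölder on $\pi^{\otimes2}$: because $k$ is even, $|P|^k=P^k$ and $|R|^k=R^k$, so $\E_{u,v}[R^sP^{k-s}]\le\E_{u,v}[R^k]^{s/k}\E_{u,v}[P^k]^{(k-s)/k}$. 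Now $(m,d,k,\eps)$-LD hardness forces $\binom{m}{k}\E_{u,v}[P^k]\le\eps$ (every term of $\sum_{t=1}^k\binom mt\E_{u,v}[P^t]\le\eps$ is nonnegative), so $\E_{u,v}[P^k]\le\eps(k/m)^k$ using $\binom mk\ge(m/k)^k$; and $(d,k,\delta)$-noise robustness is precisely $\E_{u,v}[R^k]=\bigl\|\E_u(\bar L_u^{>d})^{\otimes k}\bigr\|_\QQ^2\le\delta$. Summing the binomial expansion,
\[
\E_{u,v}\bigl[\chi_\QQ(\PP_u,\PP_v)^k\bigr]=\sum_{s=0}^k\binom ks\E_{u,v}[R^sP^{k-s}]\le\Bigl(\delta^{1/k}+\eps^{1/k}\tfrac{k}{m}\Bigr)^{k}=(m')^{-k},\qquad m'=\frac{m}{m\delta^{1/k}+k\eps^{1/k}},
\]
which is exactly $(m',k)$-USQ hardness.

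The step I expect to be the main obstacle is this control of the mixed terms $\E_{u,v}[R^sP^{k-s}]$ in (ii): one must exploit simultaneously that the quantity is a squared norm (so it is automatically nonnegative and nothing is lost by dropping an absolute value where it appears on the left) and that $k$ is even (so that the Hölder estimate on $|R^sP^{k-s}|$ is expressed in exactly the two quantities $\E_{u,v}[R^k]\le\delta$ and $\E_{u,v}[P^k]\le\eps(k/m)^k$ that the hypotheses control). Everything else—the binomial/telescoping identities for samplewise projections, the bound $\binom mk\ge(m/k)^k$, the $d=\infty\Rightarrow d<\infty$ transfer, and the $L^p$-monotonicity in (i)—is routine, and the argument closely parallels the proofs of Propositions~\ref{prop:USQ_SQ_equivalence} and \ref{prop:USQ_LD_d_infty} from \cite{brennan2021statistical}.
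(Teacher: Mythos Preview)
Your proposal is correct and follows the same structural outline as the paper's proof: part~(i) routes through Proposition~\ref{prop:USQ_LD_d_infty}(ii) (USQ $\Rightarrow$ LD with $d=\infty$) plus monotonicity in $d$ and $k'$, and part~(ii) extracts $\binom{m}{k}\E[P^k]\le\eps$ from LD-hardness by nonnegativity of all terms, then combines with the noise-robustness bound $\E[R^k]\le\delta$.

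The one methodological difference is in how the two pieces $P$ and $R$ are recombined in~(ii). The paper simply invokes Minkowski's inequality in $L^k(\pi^{\otimes 2})$ (this is \cite[Lemma~3.4]{brennan2021statistical}):
\[
\E\bigl[|\chi_{\QQ}(\PP_u,\PP_v)|^k\bigr]^{1/k}=\|P+R\|_{L^k}\le\|P\|_{L^k}+\|R\|_{L^k}\le\eps^{1/k}\tfrac{k}{m}+\delta^{1/k},
\]
which together with $k$ even gives the USQ bound in one line. Your binomial expansion plus term-by-term H\"older is exactly a proof of Minkowski for even $k$, so the two arguments are equivalent; the paper's route is shorter and, notably, does not need your observation that each $\E_{u,v}[R^sP^{k-s}]$ is a squared norm (that inequality $\E[R^sP^{k-s}]\le\E[|R^sP^{k-s}|]$ holds regardless of sign). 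Your squared-norm observation \emph{is} genuinely needed, however, for the earlier step extracting $\E[P^k]\le\eps(k/m)^k$ from the LD bound, and you identify this correctly.
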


\begin{proof}[Proof of Proposition \ref{prop:LD_to_LS}] 
Part (i) is directly implied by Proposition \ref{prop:USQ_LD_d_infty}.(ii). For part (ii), following the same argument as in the proof of Proposition \ref{prop:USQ_LD_d_infty}.(i), we get
\[
\E[| \< L_u^{\leq d}, L_v^{\leq d}\> - 1|^k] \leq \eps \frac{k^k}{m^k}.
\]
Then using \cite[Lemma 3.4]{brennan2021statistical}, we obtain
\[
\begin{aligned}
\E[| \< L_u, L_v\> - 1|^k]^{1/k} \leq&~ \E[| \< L_u^{\leq d}, L_v^{\leq d}\> - 1|^k]^{1/k} + \E[| \< L_u^{> d}, L_v^{> d}\>|^k]^{1/k}  \\
\leq&~ \eps^{1/k} \frac{k}{m} + \delta^{1/k} = \frac{k\eps^{1/k} + m \delta^{1/k}}{m} ,
\end{aligned}
\]
which concludes the proof.
\end{proof}

Then, the equivalence between LD and SQ hardness in \cite{brennan2021statistical} is obtained by combining Proposition \ref{prop:LD_to_LS} and Proposition \ref{prop:USQ_SQ_equivalence}. We state it below for completeness:

\begin{theorem}[Equivalence between LD and SQ hardness]\label{thm:LD=SQ}$\;$
\begin{itemize}
    \item[(i)] If the model is $(q,m/q^{2/t})$-SQ hard for all $q \geq 1$ (with $t \geq 4$, for simplicity), then it is $(m',d,k',e m'/m)$-LD hard for all $m' \leq m$, $k' \leq t/2$. and $d \geq 1$.

    \item[(ii)] If the model is $(m,d,k,\eps)$-LD hard and we further assume that it is $(d,k,\delta)$-noise robust, then the model is $(q,m'/q^{2/t})$-SQ hard for all $q \geq 1$ with
 \[
    m ' = \frac{m}{m \delta^{1/k} + k \eps^{1/k}}.
    \]
\end{itemize}
    
\end{theorem}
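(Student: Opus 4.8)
The plan is to obtain both directions purely by composing the two equivalences already proved in this appendix: the USQ--SQ equivalence of Proposition~\ref{prop:USQ_SQ_equivalence} and the LD--USQ equivalence (under noise robustness) of Proposition~\ref{prop:LD_to_LS}. No new estimate is required; the work is entirely in tracking how the parameters $t,k,m,\eps,\delta$ propagate through each step.

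For part~(i), I would start from the hypothesis that the model is $(q,m/q^{2/t})$-SQ hard for every integer $q\ge 1$, with $t\ge 4$. Proposition~\ref{prop:USQ_SQ_equivalence}(ii), in the simplified form recorded just after it (take $t'=t/2$ and $m'=m$), then gives $(m,t/2)$-USQ hardness. Feeding this into Proposition~\ref{prop:LD_to_LS}(i), with its degree parameter set to $t/2$, yields $(m',d,k',em'/m)$-LD hardness simultaneously for all $m'\le m$, all $k'\le t/2$, and all $d\ge 1$, which is exactly the claim. The universal quantifier over $q$ in the hypothesis is precisely what Proposition~\ref{prop:USQ_SQ_equivalence}(ii) requires as input.

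For part~(ii), I would run the chain in the other direction. From $(m,d,k,\eps)$-LD hardness together with the $(d,k,\delta)$-noise-robustness assumption, Proposition~\ref{prop:LD_to_LS}(ii) directly gives $(m'',k)$-USQ hardness with $m''=m/(m\delta^{1/k}+k\eps^{1/k})$. It is only at this step that noise robustness enters: it is the bridge --- via Proposition~\ref{prop:USQ_LD_d_infty} and \cite[Lemma~3.4]{brennan2021statistical} --- from the bounded-per-sample-degree projection $L_u^{\le d}$ appearing in the LD criterion to the unrestricted ($d=\infty$) projection implicit in the USQ quantity. Applying Proposition~\ref{prop:USQ_SQ_equivalence}(i) with this $m''$ and degree parameter $k$ then converts the USQ hardness into $(q,m''/q^{2/k})$-SQ hardness for every integer $q\ge 1$, which is part~(ii) (the exponent in the statement should be read as $2/k$).

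Since every implication used is already established, there is no genuine mathematical obstacle; the only thing to watch is the bookkeeping --- in particular the factor-of-two loss in the degree parameter incurred when passing through Proposition~\ref{prop:USQ_SQ_equivalence} in part~(i), and threading the ``for all $q\ge 1$'' quantifiers consistently through the input and output of both directions.
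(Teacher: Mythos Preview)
Your proposal is correct and follows exactly the approach the paper indicates: the theorem is obtained simply by composing Proposition~\ref{prop:USQ_SQ_equivalence} and Proposition~\ref{prop:LD_to_LS}, and you have tracked the parameters through both directions correctly (including the observation that the exponent in part~(ii) should be read as $2/k$). The paper itself does not spell out the composition in any more detail than this.
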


\subsection{Equivalence of GFP, SQ, and LD hardness for noise-robust models}
\label{sec:equivalence-GFP-SQ-LD}

Based on the SQ-LD equivalence stated in the previous section (Theorem \ref{thm:LD=SQ}) and the equivalence between GFP and SQ (Theorem \ref{thm:SQ=GFP}), we can state an equivalence between GFP and LD hardness for noise-robust models.

\begin{theorem}[LD and $\rho_G$-FP Equivalence]\label{thm:ld_GFP_equiv}
        Suppose a ``$\PP$ versus $\QQ$" task satisfies \Cref{assump:decomp} for a group $G.$
    \begin{itemize}
        \item[(i)] If the model is $(m,d,k,\eps)$-LD hard and we further assume that it is $(d,k,\delta)$-noise robust, then the model is $(q',m', e |G|^{-1} \Tilde m q^{2/t}/\Tilde m)$-$\rho_{G}$-FP hard for any integers $q \geq 1$, $q' \leq q/\sqrt{2}$, and $m' \leq \Tilde m/2 $, with
         \[
    \Tilde m = \frac{m}{m \delta^{1/k} + k \eps^{1/k}}.
    \]
    \item[(ii)] If a task is $(q, m, \varepsilon)$-$\rho_{G}$-FP hard for some $q,m$ integers. Assume that there exists an $r=r(q)>0$ such that $\pi^2(\rho_{G}(u,v)<r)=1-q^{-2}$ and $m$ is even. Then, for all even integer $4 \leq t \leq \log(q)/\log(m)$, the model is also $(m',d,k',e m'/ \Tilde m)$-LD hard for all $m' \leq \Tilde m$ and $k' \leq t/2$, and $d \geq 1$, where
    \[
    \Tilde m = \frac{m}{t(1+\epsilon)^{1/t}+\chi^2(\PP^{\otimes 4t} \,\|\, \QQ^{\otimes 4t})}.
    \]
    \end{itemize}
\end{theorem}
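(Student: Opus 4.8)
The plan is to obtain both directions purely by \emph{composing} the LD--SQ equivalence of \Cref{thm:LD=SQ} with the SQ--$\rho_G$-FP equivalence of \Cref{thm:SQ=GFP}: no new ideas are required beyond careful bookkeeping of the sample-size and query parameters, and throughout we use that \Cref{assump:decomp} holds for the group $G$, which is exactly the hypothesis under which \Cref{thm:SQ=GFP} applies.

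\textbf{Part (i).} I would start from the assumed $(m,d,k,\eps)$-LD hardness together with the $(d,k,\delta)$-noise robustness and invoke \Cref{thm:LD=SQ}(ii) (whose proof in the excerpt passes through \Cref{prop:LD_to_LS}(ii) and \Cref{prop:USQ_SQ_equivalence}(i)). Taking the sample-wise degree $k$ in the role of $t$, this yields that the model is $(q, \tilde{m}\, q^{-2/t})$-SQ hard for \emph{every} integer $q \geq 1$, with $\tilde{m} = m/(m\delta^{1/k} + k\eps^{1/k})$. Now fix any integer $q > 2$; feeding the resulting $(q, \tilde{m}\, q^{-2/t})$-SQ hardness into \Cref{thm:SQ=GFP}(1) produces $(q', m', e|G|^{-1} m' q^{2/t}/\tilde{m})$-$\rho_G$-FP hardness for all integers $q' < q/\sqrt{2}$ and all $m' \leq \tilde{m}\, q^{-2/t}/2$, which is the asserted conclusion.

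\textbf{Part (ii).} Here I would run the same chain in reverse. Starting from $(q,m,\eps)$-$\rho_G$-FP hardness, and using the stated side conditions that some $r = r(q) > 0$ satisfies $\pi^2(\rho_G(u,v) < r) = 1 - q^{-2}$ and that $m$ is even, \Cref{thm:SQ=GFP}(2) gives, for every even $4 \leq t \leq \log q / \log m$ and \emph{every} integer $q' > 0$, that the model is $(q', \tilde{m}\, (q')^{-2/t})$-SQ hard with $\tilde{m} = m/\bigl(t(1+\eps)^{1/t} + \chi^2(\PP^{\otimes 4t} \,\|\, \QQ^{\otimes 4t})\bigr)$. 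Since this is precisely SQ hardness of the uniform-in-$q'$ form demanded by the hypothesis of \Cref{thm:LD=SQ}(i), applying that result with the same $t \geq 4$ yields $(m', d, k', e m'/\tilde{m})$-LD hardness for all $m' \leq \tilde{m}$, all $k' \leq t/2$, and all $d \geq 1$, as claimed.

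I expect the only delicate point to be the parameter tracking, in particular the fact that the intermediate SQ-hardness conclusions must hold \emph{uniformly over all query budgets} before one specializes: in part (i) one specializes the uniform-in-$q$ SQ hardness output of \Cref{thm:LD=SQ}(ii) to a single $q$ before invoking \Cref{thm:SQ=GFP}(1), while in part (ii) one needs the ``for all $q'$'' form coming out of \Cref{thm:SQ=GFP}(2) in order to apply \Cref{thm:LD=SQ}(i). Both are available because the corresponding statements in the excerpt are quantified over all query budgets, so everything else is routine substitution. Since the two component equivalences are already established, I do not anticipate a genuine obstacle here; the work is purely organizational.
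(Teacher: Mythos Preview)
Your proposal is correct and follows exactly the approach the paper intends: the paper does not give an explicit proof of \Cref{thm:ld_GFP_equiv} but simply presents it as the composition of \Cref{thm:LD=SQ} and \Cref{thm:SQ=GFP}, which is precisely what you spell out. Your parameter tracking is careful and even slightly more precise than the stated theorem (e.g., you correctly identify $t=k$ in part (i) and obtain the tighter constraint $m' \leq \tilde m\, q^{-2/t}/2$, whereas the theorem statement as written has an undefined $t$ and an apparent typo $\tilde m\, q^{2/t}/\tilde m$ in the $\eps$-slot).
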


Note that the implication of GFP hardness to LD hardness is unconditional. LD hardness with $d=\infty$ implies GFP hardness, while for $d<\infty$, this implication holds under the noise robustness assumption.

\section{Proofs of main theorems}

\subsection{Proof of Theorem \ref{lem:GFP=AGFP}}
\label{app:proof_GFP=AGFP}

\begin{proof}[Proof of Theorem \ref{lem:GFP=AGFP}]
    It is clear that $(q, m, \varepsilon)$-$\rho_G$-FP hard implies $(q, m, \varepsilon)$-GFP$_{G}$ hard as for the event \[A:=\{\rho_G(u, v) < r(q)\},\] it clearly holds $\pi^{\otimes 2}(A) \ge 1 - q^{-2}$ and, since $G$ is a group, $G^{\otimes 2}(A) = A$. Hence,

    \[\inf_{A: \pi^{\otimes 2}(A) \ge 1 - q^{-2} \atop G^{\otimes 2}(A) = A} \EE\Bigl[ \bigl\langle L_u^{\otimes m}, L_v^{\otimes m}\bigr\rangle_{\QQ}  \ind(A) \Bigr] \le \EE\left[
        \langle L_u^{\otimes m}, L_v^{\otimes m}\rangle_{\QQ}  \cdot \ind(\rho_G(u, v) < r(q))
    \right] \le 1 + \varepsilon,\]implying the desired result.

    We now focus on the other direction.
    By decomposing the likelihood ratio inner product, we obtain 
    \begin{align}\label{eq:expansion}
        \langle L_u^{\otimes m}, L_v^{\otimes m}\rangle_{\QQ}  
        &= \bigl( \langle L_u, L_v \rangle_{\QQ}  - 1 + 1\bigr)^m 
        = \sum_{t=0}^m {m\choose t} \cdot \left(\langle L_u, L_v \rangle_{\QQ}  - 1\right)^t.
    \end{align}
    Taking expectation over the prior $\pi^{\otimes 2}$ conditioned on \emph{any} event $A$ satisfying $G^{\otimes 2}(A) = A$ and $\pi^2(A)=1-q^{-2}$, we have
    \begin{align}
        \EE\bigl[\langle L_u^{\otimes m}, L_v^{\otimes m}\rangle_{\QQ}  \given A\bigr] 
            &= \sum_{t=0}^m {m\choose t} \cdot \EE\bigl[\bigl(\langle L_u, L_v \rangle_{\QQ}  - 1\bigr)^t \given A\bigr] \\
            &=  \sum_{t=1}^m {m\choose t} \cdot \Bigl( \EE_{g \sim \mathrm{Unif}(G)} \EE\bigl[\bigl(\langle L_{g(u)}, L_{g(v)} \rangle_{\QQ}  - 1\bigr)^t \given A\bigr] \Bigr) + 1 \\
            &\ge  \sum_{t=1}^{\floor{m/2}} {m\choose 2t} \cdot \Bigl(\EE\bigl[\EE_{g \sim \mathrm{Unif}(G)} \bigl(\langle L_{g(u)}, L_{g(v)} \rangle_{\QQ}  - 1\bigr)^{2t} \given A\bigr] \Bigr) + 1. 
    \end{align}
    where in the second equality, we used that $G$ is a $\pi$-preserving transformation, and for the inequality we use \Cref{assump:decomp}.
    
    Clearly for all $t \geq 0,$
    \begin{align}
         \EE_{g \sim \mathrm{Unif}(G)} \bigl(\langle L_{g(u)}, L_{g(v)} \rangle-1\bigr)^{2t}_{\QQ}   \ge |G|^{-1} \rho_{G}(u, v)^{2t}.
    \end{align}
    Therefore, we further conclude that 
    \begin{align}
        \EE\bigl[\langle L_u^{\otimes m}, L_v^{\otimes m}\rangle_{\QQ}  -1\given A\bigr] &\ge |G|^{-1}\sum_{t=1}^{\floor{m/2}} {m\choose 2t} \cdot \EE\bigl[\rho_{G}(u, v)^{2t}\given A\bigr].  
        \label{eq:GFP-AGFP-equiv-2}
    \end{align}
    Recall that  $r(q)$ satisfies \[\pi^2((u, v): \rho_G(u, v) \le r(q)) = \pi^2(A)=1-q^{-2}.\] Hence, by definition of $r(q)$ we have
    \begin{align}
       |G|^{-1} \sum_{t=1}^{\floor{m/2}} {m\choose 2t} \cdot \EE\bigl[\rho_{G}(u, v)^{2t}\given A\bigr] 
       &\ge   |G|^{-1}\sum_{t=1}^{\floor{m/2}} {m\choose 2t} \cdot \EE\bigl[\rho_{G}(u, v)^{2t}\given \rho_{G}(u, v) \le r(q)\bigr] \\ 
        &\ge  |G|^{-1} \sum_{t=1}^{\floor{m/2}} {m\choose 2t} \cdot \EE\bigl[ \bigl|\langle L_u, L_v \rangle_{\QQ}  - 1\bigr|^{2t}\given \rho_{G}(u, v) \le r(q)\bigr]. 
        \label{eq:GFP-AGFP-equiv-1}
    \end{align}
    In addition, we notice that for each even order $2t +1$ with $t= 1, \ldots, \floor{m/2} - 1$, it holds by \Cref{lem:binomial_ratio} that
    \begin{align}
        \frac{{m\choose 2t+1} \cdot \bigl|\langle L_u, L_v \rangle_{\QQ}  - 1\bigr|^{2t+1} }{\sqrt{{m\choose 2t} \cdot \bigl|\langle L_u, L_v \rangle_{\QQ}  - 1\bigr|^{2t}  \cdot {m\choose 2t+2} \cdot \bigl|\langle L_u, L_v \rangle_{\QQ}  - 1\bigr|^{2t+2}} } = \frac{{m\choose 2t+1} }{\sqrt{{m\choose 2t} \cdot {m\choose 2t+2}} } \le 2. 
    \end{align}
    Therefore, using the inequality $2\sqrt{ab} \le a + b$ for $a, b \ge 0$, we obtain 
    \begin{align}
        {m\choose 2t+1} \cdot \bigl|\langle L_u, L_v \rangle_{\QQ}  - 1\bigr|^{2t+1} \le {m\choose 2t} \cdot \bigl|\langle L_u, L_v \rangle_{\QQ}  - 1\bigr|^{2t}  + {m\choose 2t+2} \cdot \bigl|\langle L_u, L_v \rangle_{\QQ}  - 1\bigr|^{2t+2}. 
    \end{align}
    Consequently, the right-hand-side of \eqref{eq:GFP-AGFP-equiv-1} can be further lower bounded by 
    \begin{align}
        &  |G|^{-1} \sum_{t=1}^{\floor{m/2}} {m\choose 2t} \cdot \EE\bigl[ \bigl|\langle L_u, L_v \rangle_{\QQ}  - 1\bigr|^{2t}\given \rho_{G}(u, v) \le r(q)\bigr] \\
        &\quad \ge \frac{ |G|^{-1}}{3} \sum_{t=2}^{m} {m\choose t} \cdot \EE\bigl[ \bigl|\langle L_u, L_v \rangle_{\QQ}  - 1\bigr|^{t}\given \rho_{G}(u, v) \le r(q)\bigr] \\
        &\quad \ge \frac{ |G|^{-1}}{3} \sum_{t=2}^{m} {m\choose t} \cdot \EE\bigl[ \bigl(\langle L_u, L_v \rangle_{\QQ}  - 1\bigr)^{t}\given \rho_{G}(u, v) \le r(q)\bigr]. 
        \label{eq:GFP-AGFP-equiv-3}
    \end{align}
    Combining \eqref{eq:GFP-AGFP-equiv-2}, \eqref{eq:GFP-AGFP-equiv-1}, and \eqref{eq:GFP-AGFP-equiv-3}, with the condition of $(q, m, \varepsilon)$-GFP$_{\cT}$ hardness, we obtain
    \begin{align}
        &\sum_{t=2}^{m} {m\choose t} \cdot \EE\bigl[ \bigl(\langle L_u, L_v \rangle_{\QQ}  - 1\bigr)^{t}\given \rho_{G}(u, v) \le r(q)\bigr]  \le 3|G|\cdot \EE\bigl[\langle L_u^{\otimes m}, L_v^{\otimes m}\rangle_{\QQ}  -1\given A\bigr].
    \end{align}
    Again, by the definition of $r(A)$ it follows that 
    \begin{align}
        &\sum_{t=2}^{m} {m\choose t} \cdot \EE\bigl[ \bigl(\langle L_u, L_v \rangle_{\QQ}  - 1\bigr)^{t} \cdot \ind(\rho_G(u, v) \le r(q)) \bigr] \le 3|G| \EE\bigl[(\langle L_u^{\otimes m}, L_v^{\otimes m}\rangle_{\QQ}  -1)\ind(A)\bigr]
    \end{align}and therefore by \eqref{eq:expansion}

    \begin{align}
        \EE\bigl[(\langle L_u^{\otimes m}, L_v^{\otimes m}\rangle_{\QQ}  -1)&\ind(\rho_G(u, v) \le r(q))\bigr]\\
        &\le 3|G| \EE\bigl[(\langle L_u^{\otimes m}, L_v^{\otimes m}\rangle_{\QQ}  -1)\ind(A)\bigr]+m\EE\bigl[ \bigl(\langle L_u, L_v \rangle_{\QQ}  - 1\bigr) \cdot \ind(\rho(u, v) \le r(q)) \bigr] 
    \end{align}
    Next, we aim to upper bound the first order term, namely $m\cdot \EE[(\langle L_u, L_v\rangle - 1) \cdot \ind (\rho_G(u, v) \le r(q))]$. Note that $A':=\{(u, v): \rho_G(u, v) \leq r(q)\}$ is also $G^{\otimes 2}$-invariant. Hence, employing also \Cref{assump:decomp} we also have 
    \begin{align}
        &\EE[(\langle L_u, L_v\rangle - 1) \cdot \ind (\rho_G(u, v) \leq r(q))] \\
        & =  \EE[ \bigl(\EE_{g \sim \mathrm{Unif}(G)}\langle L_{g(u)}, L_{g(v)} \rangle_{\QQ}  - 1\bigr))\cdot \ind (\rho_G(u, v) \leq r(q))]\\
        & \leq \EE[\bigl(\EE_{g \sim \mathrm{Unif}(G)}\langle L_{g(u)}, L_{g(v)} \rangle_{\QQ}  - 1\bigr))]\\
        & =\EE[(\langle L_u, L_v\rangle - 1)] \\
        &=\chi^2 (\PP, \QQ).
    \end{align}
    
Therefore,
    \begin{align}
        &\EE\bigl[(\langle L_u^{\otimes m}, L_v^{\otimes m}\rangle_{\QQ}  -1)\ind(\rho_G(u, v) \le r(q))\bigr]\le 3|G| \EE\bigl[(\langle L_u^{\otimes m}, L_v^{\otimes m}\rangle_{\QQ}  -1)\ind(A)\bigr]+m\chi^2 (\PP, \QQ).
    \end{align}from which the result follows.  
\end{proof}

\begin{lemma}\label{lem:binomial_ratio}
    For any $t \in \{1, 2, \ldots, n-1\}$ and $n \ge 3$, we have 
    \begin{align}
        \frac{{n\choose t}^2}{{n\choose t-1} \cdot {n\choose t+1}} \le 4. 
    \end{align}
\end{lemma}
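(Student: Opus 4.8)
The plan is to express the ratio of binomial coefficients in closed form and then show it is bounded by $4$ uniformly in $t$ and $n$. First I would write out the two ratios
\[
\frac{{n\choose t}}{{n\choose t-1}} = \frac{n-t+1}{t}, \qquad \frac{{n\choose t}}{{n\choose t+1}} = \frac{t+1}{n-t},
\]
both of which are elementary. Multiplying these gives
\[
\frac{{n\choose t}^2}{{n\choose t-1}\cdot {n\choose t+1}} = \frac{(n-t+1)(t+1)}{t(n-t)}.
\]
So the lemma reduces to the inequality $(n-t+1)(t+1) \le 4\,t(n-t)$ for all integers $1 \le t \le n-1$ with $n \ge 3$.

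Next I would prove this last inequality directly. Expanding, one needs $(n-t+1)(t+1) = t(n-t) + (n-t) + (t+1) = t(n-t) + n + 1$, so the claim becomes $t(n-t) + n + 1 \le 4\, t(n-t)$, i.e. $n+1 \le 3\, t(n-t)$. Since $1 \le t \le n-1$, the product $t(n-t)$ is minimized at the endpoints $t=1$ or $t=n-1$, where it equals $n-1$. Thus it suffices to check $n+1 \le 3(n-1)$, i.e. $n+1 \le 3n-3$, i.e. $2n \ge 4$, i.e. $n \ge 2$, which holds since $n \ge 3$. This completes the argument.

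The only mild subtlety — and the step I'd be most careful about — is the claim that $t(n-t)$ attains its minimum over $\{1,\dots,n-1\}$ at the endpoints: the function $t \mapsto t(n-t)$ is a downward parabola in $t$, hence concave, so on any interval its minimum over the integer points is at one of the two extreme integers, namely $t=1$ or $t=n-1$, both giving $n-1$. Once this is in place the rest is routine arithmetic. I would present the proof in exactly this order: (i) the two single-step ratio identities, (ii) their product, (iii) reduction to $n+1 \le 3t(n-t)$, (iv) the endpoint minimization and final numeric check.
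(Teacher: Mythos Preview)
Your proof is correct and follows essentially the same approach as the paper: both compute the ratio in closed form as $\frac{(n-t+1)(t+1)}{t(n-t)} = 1 + \frac{n+1}{t(n-t)}$, then use that $t(n-t) \ge n-1$ on $\{1,\dots,n-1\}$ to finish with elementary arithmetic. The only cosmetic difference is that the paper bounds $1 + \frac{n+1}{n-1} = 2 + \frac{2}{n-1} \le 4$ directly, while you rearrange to the equivalent inequality $n+1 \le 3(n-1)$.
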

\begin{proof}
    Note that by the successive ratio between binomial coefficients, we have
    \begin{align}
        \frac{{n\choose t}^2}{{n\choose t-1} \cdot {n\choose t+1}} = 1 + \frac{1 + n}{t(n-t)} \le 1 + \frac{1+n}{n-1} = 2 + \frac{2}{n-1} \le 4. 
    \end{align}
    This completes the proof.
\end{proof}

\subsection{Proof of Theorem \ref{thm:SQ=GFP}}
\label{app:proof_SQ=GFP}

\begin{proof}[Proof of \Cref{thm:SQ=GFP}]
\textbf{SQ implies $\rho_G$-FP.}
We have that
\begin{align}
    \sup_{A:\pi^2(A) \ge q^{-2}}\EE\left[
        \left|\langle L_u, L_v \rangle -1\right| 
        \given A \right] \le \frac {1} {m}.
\end{align}

Now as $G$ is $\pi$-preserving that easily implies that for any $A$ such that $G^{\otimes 2}(A)=A,$ that

\begin{align}
\sup_{A:\pi^2(A) \ge q^{-2}}\EE\left[
        \rho_{G}(u, v)  
        \given A \right] \leq  |G| \sup_{A:\pi^2(A) \ge q^{-2}}\EE\left[
        \EE_{g \sim \mathrm{Unif}(G)} \left|\langle L_{g(u)}, L_{g(v)} \rangle -1\right|
        \given A \right]\le \frac {|G|} {m}.
\end{align}Hence for any $r>0$ if we set $A_r=\{\rho_{G}(u,v) \geq r\}$ since $G^{\otimes 2}(A)=A$ we conclude that $\pi^2(A_r) \geq q^{-2}$ implies $r \leq |G|/m.$ Recall that $r(q)>0$ satisfies $\pi^2(A_{r(q)}) \geq q^{-2}$. In particular, $r(q) \leq |G|/ m$, and therefore for any $m' \leq m/2,$

\begin{align}
    \EE\left[
        \langle L_u^{\otimes m'}, L_v^{\otimes m'}\rangle_{\QQ}  \cdot \ind(\rho_{G}(u, v)  \leq r(q))
    \right] 
    &= \EE\left[
        \left(
            \langle L_u, L_v\rangle_{\QQ}  - 1 + 1
        \right)^{m'} \cdot \ind(\rho_{G}(u, v) \leq r(q))
    \right] \notag\\
    &\le \EE\left[(\rho_{G}(u, v) + 1)^{m'} \cdot \ind(\rho_{G}(u, v) \leq r(q))\right]\\ 
    &\le (r(q)+1)^{m'} \leq (|G|/ m+1)^{m'} \leq 1+e|G|m'/ m.
    \label{eq:SQ-to-GFP-2}
\end{align}This concludes the $(q, m', e|G|m'/ m)$-$\rho_G$-FP hardness.

\noindent
\textbf{$\rho_G$-FP hardness implies SQ-hardness.}
Suppose we have $(q, m , \varepsilon)$-$\rho_G$-FP hardness
\begin{align}
    \EE\left[
        \langle L_u^{\otimes m}, L_v^{\otimes m}\rangle_{\QQ}  \cdot \ind(\rho_{G}(u, v) < r(q))
    \right] \le 1 + \varepsilon, \where \pi^2(\rho_{G}(u, v) \geq r(q))= q^2.
\end{align}

By \cref{def:GFP}, we have that
\begin{align}
    1+\varepsilon
    &\ge  \EE\left[
        \left(\langle L_u^{\otimes m}, L_v^{\otimes m}\rangle_{\QQ}  -1\right) \cdot \ind(\rho_{G}(u, v) < r(q))
    \right]\\
    &=\EE\left[
        \sum_{t=1}^{m} {m\choose t} \cdot \left(
            \langle L_u, L_v\rangle_{\QQ}  - 1
        \right)^t \cdot \ind(\rho_{G}(u, v) < r(q))
    \right]\\
    &= \EE\left[
        \sum_{t=1}^{m} {m\choose t} \EE_{g \sim \mathrm{Unif}(G)}[\left(
            \langle L_{g(u)}, L_{g(v)}\rangle_{\QQ}  - 1\right)^t]
         \cdot \ind(\rho_{G}(u, v) < r(q)) \right], 
\end{align}
where the first inequality holds by the definition of the $\rho_G$-FP hardness and the second equality holds by using the elementary
\(
    \langle L_u^{\otimes m}, L_v^{\otimes m}\rangle_{\QQ}  = (\langle L_u, L_v\rangle_{\QQ}  - 1 + 1)^m
\).
The last equality holds by using that $G$ is $\pi$-measure preserving.
As crucially $\EE_{g \sim \mathrm{Unif}(G)}[\left(
            \langle L_{g(u)}, L_{g(v)}\rangle_{\QQ}  - 1\right)^t]\ge 0$ for all integers $t \geq 0$, we have
\begin{align}
    &\EE\left[
        \sum_{t=1}^{m} {m\choose t} \EE_{g \sim \mathrm{Unif}(G)}[\left(
            \langle L_{g(u)}, L_{g(v)}\rangle_{\QQ}  - 1\right)^t]
         \cdot \ind(\rho_{G}(u, v) < r(q)) \right] \\
    & \quad \ge \EE\left[
        \sum_{t\le m, \: t~\text{even}} {m\choose t} \cdot \EE_{g \sim \mathrm{Unif}(G)}[\left(
            \langle L_{g(u)}, L_{g(v)}\rangle_{\QQ}  - 1\right)^t]
         \cdot \ind(\rho_{G}(u, v) < r(q))
    \right]\\
    & \quad 
    \ge \max_{1\le t\le m, \atop t \text{~even}} \EE\left[
        {m\choose t} \cdot \left(
            \langle L_u, L_v\rangle_{\QQ}  - 1
        \right)^{t} 
         \cdot \ind(\rho_{G}(u, v) < r(q))
    \right].
\end{align}Hence, combining the two for all even $t,$ with $1 \leq t \leq m,$

\begin{align}
   \max_{1\le t\le m, \atop t \text{~even}} \EE\left[
        \left(
            \langle L_u, L_v\rangle_{\QQ}  - 1
        \right)^{t} 
         \cdot \ind(\rho_G(u, v) < r(q))\right] \leq \frac{1+\epsilon}{ {m\choose t}}. 
\end{align}
Therefore, we have for any even $t$ with $t \le m$ that 
\begin{align}
    \EE\bigl[
        \left(\langle L_u, L_v\rangle -1\right)^t 
    \bigr]
    & = \EE\bigl[
        \left(\langle L_u, L_v\rangle - 1\right)^t \ind(\rho_{G}(u, v) < r(q))
    \bigr] + \EE\bigl[
        \left(\langle L_u, L_v\rangle - 1\right)^t \ind(\rho_{G}(u, v) \ge r(q))
    \bigr] \\
    &\le \frac{1+\varepsilon}{{m\choose t}} + \EE\bigl[(\langle L_u, L_v\rangle - 1)^{2t}\bigr]^{1/2} \cdot q^{-1} \\
    &\le \left(
        \frac{ t (1+\varepsilon)^{1/t}}{m} + \frac{ \chi^2(\PP^{\otimes 4t} \,\|\, \QQ^{\otimes 4t})^{1/2t}}{q^{1/t}}
    \right)^t.
\end{align}
where in the first inequality, we use the Cauchy-Schwarz inequality for the second term and the fact that $\pi^2(\rho_G(u, v) \ge r(q)) \le q^2$.
In the second term, we use the elementary ${m \choose t} \geq (m/t)^t$.

Now focusing on $t \leq \log q/\log m$ we further have 
\begin{align}
    \EE\bigl[
        \left(\langle L_u, L_v\rangle -1\right)^t 
    \bigr]&\le \left(
        \frac{ t (1+\varepsilon)^{1/t}+\chi^2(\PP^{\otimes 4t} \,\|\, \QQ^{\otimes 4t})}{m}
    \right)^t.
    \end{align}Hence the model is $(\frac{m}{ t (1+\varepsilon)^{1/t}+\chi^2(\PP^{\otimes 4t} \,\|\, \QQ^{\otimes 4t})},t)$-USQ hard. By \Cref{prop:USQ_SQ_equivalence} we conclude for any $q'>0$ that the model is $(q',\frac{m (q')^{-2/t}}{ t (1+\varepsilon)^{1/t}+\chi^2(\PP^{\otimes 4t} \,\|\, \QQ^{\otimes 4t})})$-SQ hard. The second part follows by setting $t=(\log m)^s$ and $q'=e^{\delta (\log m)^{s+1}}.$
\end{proof}

\section{Details of examples and proofs}
\label{app:proofs_examples}

\subsection{Proofs for mSLR}

\begin{proof}[Proof of Lemma \ref{lem:mSLR}]
Let $\lambda = \sqrt{k/\sigma^2 + 1}$ and since $\langle L^{\otimes m}_u,L^{\otimes m}_v \rangle_{\QQ} =(\langle L^{\otimes m}_u,L^{\otimes m}_v \rangle_{\QQ} )^m$ we focus on the case $m=1$.

Let $Y=Y_1, X=X_1$. By definition and Bayes' rule,
\[ L_u=L_u(X,Y) =\frac{\PP(Y|X,u)}{\QQ(Y)} \, \]
Under $\QQ$ we have $\lambda\sigma Y \sim \cN(0,\lambda^2\sigma^2)$, while under $\PP$ conditional on $(X,u)$ we have 
\[\lambda \sigma Y=\sqrt{k+\sigma^2}Y \sim \frac 1 2 \mathcal{N}(\langle X,u\rangle, \sigma^2)+\frac 1 2 \mathcal{N}(-\langle X,u \rangle, \sigma^2),\] and so
\begin{align}
&L_u=\frac{\PP(Y|X,u)}{\QQ(Y)} \\
&= \frac 1 2 \lambda \exp{  - \frac{1}{2\sigma^2} ( \lambda \sigma Y- \langle X,u\rangle)^2 + 
\frac{1}{2\lambda^2\sigma^2} ( \lambda\sigma Y)^2 }+\frac 1 2 \lambda \exp{  - \frac{1}{2\sigma^2} ( \lambda \sigma Y+\langle X,u\rangle)^2 + 
\frac{1}{2\lambda^2\sigma^2} ( \lambda\sigma Y )^2 } \\
&= \frac{\lambda^m}{2} \left( \exp {  -  \frac{\lambda^2 -1 }{2 } Y^2 
+ \frac{\lambda}{\sigma}  Y\langle X,u\rangle - \frac{1}{2\sigma^2} \langle X,u\rangle^2 }+\exp {  -  \frac{\lambda^2 -1 }{2 }  Y^2 
- \frac{\lambda}{\sigma} Y \langle X,u\rangle - \frac{1}{2\sigma^2} \langle X,u\rangle^2 } \right).
\end{align}
Now a standard integration argument using the MGF of the $\chi^2$ distribution (see e.g., the proof of \cite[Proposition 6.8.]{bandeira2022franz} for an almost identical argument) gives for any $u,v$ binary $k$-sparse vectors,
\begin{align}\label{eq:inner-calc-2}
\langle L_u,L_v \rangle_{\QQ}  = &\frac{\lambda^{2}}{2(2\lambda^2-1)^{1/2}} \EE_{X \sim \QQ} ( \exp { \frac{1}{2\sigma^2(2\lambda^2-1)} 
\left[ (1-\lambda^2) \left( \langle X,u\rangle^2 + \langle X,v\rangle^2 \right) + 2\lambda^2 \langle X,u\rangle\langle X,v\rangle   \right] }\\
&+\exp { \frac{1}{2\sigma^2(2\lambda^2-1)} 
\left[ (1-\lambda^2) \left( \langle X,u\rangle^2 + \langle X,v\rangle^2 \right) - 2\lambda^2 \langle X,u\rangle\langle X,v\rangle   \right] })
\end{align} Now of course the pair $(\langle X,u\rangle, \langle X,v\rangle)$ follows a bivariate Gaussian law with variances equals to $k$ and covariance $\langle u,v \rangle.$ Hence, some standard manipulations (see again the proof of \cite[Proposition 6.8.]{bandeira2022franz} for an almost identical argument) allow us to derive that for $Z \in \RR^{1 \times 3}$ with i.i.d. $\cN(0,1)$ entries and \begin{equation}\label{eq:t}
t := \frac{1}{2\sigma^2(2\lambda^2-1)} = \frac{1}{2\sigma^2(2k/\sigma^2+1)} = \frac{1}{4k + 2\sigma^2}.
\end{equation} it holds
\begin{align}
\langle L_u,L_v \rangle_{\QQ}  &=  \frac{\lambda^{2m}}{2(2\lambda^2-1)^{m/2}}  \EE_Z (\exp{t \langle M_1,Z^\top Z \rangle}+\exp{t \langle M_2,Z^\top Z \rangle} )
\label{eq:inner-calc-3}
\end{align}
where for $\ell:=\langle u, v \rangle,$
\[ M_1 = M_1(\ell) := \left(\begin{array}{ccc} 2\ell & \sqrt{\ell(k-\ell)} & \sqrt{\ell(k-\ell)} \\ \sqrt{\ell(k-\ell)} & (1-\lambda^2)(k-\ell) & \lambda^2(k-\ell) \\ \sqrt{\ell(k-\ell)} & \lambda^2(k-\ell) & (1-\lambda^2)(k-\ell) \end{array}\right). \]
and
\[ M_2 = M_2(\ell) := \left(\begin{array}{ccc} 2(1-2\lambda^2)\ell & (1-2\lambda^2)\sqrt{\ell(k-\ell)} & (1-2\lambda^2)\sqrt{\ell(k-\ell)} \\ (1-2\lambda^2)\sqrt{\ell(k-\ell)} & (1-\lambda^2)(k-\ell) & -\lambda^2(k-\ell) \\ (1-2\lambda^2)\sqrt{\ell(k-\ell)} & -\lambda^2(k-\ell) & (1-\lambda^2)(k-\ell) \end{array}\right). \]
The eigendecompositions of $M_1,M_2$ of the form $\sum_{i=1}^3 \lambda_i \frac{u_i u_i^\top}{\|u_i\|^2}$ are, first for $M_1,$
\begin{equation}\label{eq:M_1-eig}
\begin{array}{lll}
u_1^\top = (0 \quad 1 \quad -1) & \qquad & \lambda_1 = (1-2\lambda^2)(k-\ell) \\
u_2^\top = (\sqrt{k-\ell} \quad -\sqrt{\ell} \quad -\sqrt{\ell}) & & \lambda_2 = 0 \\
u_3^\top = (2\sqrt{\ell} \quad \sqrt{k-\ell} \quad \sqrt{k-\ell}) & & \lambda_3 = k+\ell.
\end{array}
\end{equation}and for $M_2,$

\begin{equation}\label{eq:M_2-eig}
\begin{array}{lll}
u_1^\top = (0 \quad 1 \quad -1) & \qquad & \lambda_1 = k-\ell \\
u_2^\top = (\sqrt{k-\ell} \quad -\sqrt{\ell} \quad -\sqrt{\ell}) & & \lambda_2 = 0 \\
u_3^\top = (2\sqrt{\ell} \quad \sqrt{k-\ell} \quad \sqrt{k-\ell}) & & \lambda_3 = (1-2\lambda^2)(k+\ell).
\end{array}
\end{equation}
As $t<1/(4k)$ we have $2t \max\{ \|M_1\|_{\mathrm{op}},\|M_2\|_{\mathrm{op}}\} < (k+\ell)/2k  \leq 1. $
Hence, using \cite[Lemma A.5.]{bandeira2022franz} for $B(U)=\RR^{n \times m}$, we have 
\begin{equation}\label{eq:inner-calc-4}
\langle L_u,L_v \rangle_{\QQ}  = \frac{\lambda^{2}}{2(2\lambda^2-1)^{1/2}} (\det(I_3-2tM_1)^{-1/2} +\det(I_3-2tM_2)^{-1/2}).
\end{equation}Using~\eqref{eq:t} and~\eqref{eq:M_1-eig}, ~\eqref{eq:M_2-eig} the eigenvalues of the matrices $I_3-2tM_1,I_3-2tM_2$ are
\[ \{1,\, 1-2t(k+\ell),\, 1-2t(1-2\lambda^2)(k-\ell)\}=\left\{1,\,1-\frac{k+\ell}{\sigma^2(2\lambda^2-1)},\,1+\frac{k-\ell}{\sigma^2}\right\} \]and
\[ \{1,\, 1-2t(k-\ell),\, 1-2t(1-2\lambda^2)(k+\ell)\}=\left\{1,\,1-\frac{k-\ell}{\sigma^2(2\lambda^2-1)},\,1+\frac{k+\ell}{\sigma^2}\right\}. \]
Since $\lambda^2=k/\sigma^2+1$ we have
\begin{align}
\frac{\lambda^{2}}{\sqrt{2\lambda^2-1}} \det(I_3-2tM_1)^{-1/2} &=
\lambda^2 \left[ \left(2\lambda^2-1-\frac{k+\ell}{\sigma^2} \right)\left(1+\frac{k-\ell}{\sigma^2}\right)\right]^{-1/2} \\
&= \frac{\frac{k}{\sigma^2}+1}{1+\frac{k-\ell}{\sigma^2}} \\
&= \left(1 - \frac{\ell}{k+\sigma^2}\right)^{-1}.
\end{align} and by symmetry,
\begin{align}
\frac{\lambda^{2}}{\sqrt{2\lambda^2-1}} \det(I_3-2tM_2)^{-1/2} 
&= \left(1 + \frac{\ell}{k+\sigma^2}\right)^{-1}.
\end{align}Combining the above,
\begin{align}\label{eq:inner-calc-2}
\langle L_u,L_v \rangle_{\QQ}  &= \frac{1}{2}(\left(1 - \frac{\ell}{k+\sigma^2}\right)^{-1}+\left(1 + \frac{\ell}{k+\sigma^2}\right)^{-1})\\
& = \left(1 - \left(\frac{\ell}{k+\sigma^2}\right)^2\right)^{-1}\\
&\leq \exp{\frac{m \ell^2}{(k+\sigma^2)^2-\ell^2}},
\end{align}where for the last inequality we used that $\log x \geq 1-1/x,$ for $x>0.$
\end{proof}

\begin{proof}[Proof of Theorem \ref{thm:mSLR}]
    We have from the first part of Lemma \ref{lem:mSLR},
    \begin{align}
       \chi^2(\PP^{\otimes m}, \QQ^{\otimes m})-1&=\EE_{u,v \sim \pi} \langle L^{\otimes m}_u,L^{\otimes m}_v \rangle_{\QQ}  \leq \EE_{u,v \sim \pi}  \left(1 - \frac{\langle u, v \rangle^2}{(k+\sigma^2)^2}\right)^{-m}  
    \end{align}But, in this setting $\langle u, v \rangle$ follows an Hypergeometric distribution with parameters $n,k,k.$ Hence, by \cite[Lemma 6.6] {bandeira2022franz}, 
    \begin{align}
       \chi^2(\PP^{\otimes m}, \QQ^{\otimes m})-1& \leq \sum_{\ell=0}^k \left(1 - \frac{\ell^2}{(k+\sigma^2)^2}\right)^{-m}(\frac{k^2}{n-k})^{\ell}\\
       & \leq \sum_{\ell=0}^{\lfloor k/2 \rfloor} \exp{\frac{m \ell^2}{(k+\sigma^2)^2-\ell^2}}e^{-\ell \log (\frac{n-k}{k^2})}+\sum_{\ell=\lfloor k/2 \rfloor}^{k} (1-\frac{k^2}{(k+\sigma^2)})^{-m}e^{-\ell \log (\frac{n-k}{k^2})}\\
       & \leq \sum_{\ell=0}^{\lfloor k/2 \rfloor} e^{\ell m/(k-\ell)-\ell \log (\frac{n-k}{k^2})}+k (\frac{(\frac{k}{\sigma^2})^2}{2\frac{k}{\sigma^2}+1}+1)^{m}e^{-\Theta(k \log (\frac{n-k}{k^2}))},
       \end{align} where for the last inequality we used $\log x \geq 1-1/x$ for $x>0$.
       
       Since $k^2=o(n), m=o(\frac{k}{\log(\frac{\mathrm{SNR}^2}{2\mathrm{SNR}+1}+1)}),\mathrm{SNR}=k/\sigma^2$ we have for large enough $n,$ \[k (\frac{(\frac{k}{\sigma^2})^2}{2\frac{k}{\sigma^2}+1}+1)^{m}e^{-\Theta(k \log (\frac{n-k}{k^2}))}=e^{-\Theta(k \log (\frac{n-k}{k^2}))}=o(1).\] Moreover, since $k^2=o(n), m=o(k)$ we have for large enough $n$, \[\sum_{\ell=0}^{\lfloor k/2 \rfloor} e^{\ell m/(k-\ell)-\ell \log (\frac{n-k}{k^2})} \leq \sum_{\ell=0}^{\lfloor k/2 \rfloor} e^{2\ell m/k-\ell\log (\frac{n-k}{k^2}) } \leq \sum_{\ell=0}^{\lfloor k/2 \rfloor} e^{-\ell\log (\frac{n-k}{k^2})/2 }=1+o(1).\]

       Now, fix any $T>1$. Notice $\langle L_u, L_v \rangle_{\QQ}$ is an increasing function of $\langle u, v \rangle.$ Hence, for any $q>0$ there exists $\delta_0(q)>0$ such that $\{\rho_{\mathrm{id}}(u,v) \geq r(q)\}=\{\langle u,v \rangle \geq \delta_0(q)\}.$ Moreover, from the tail of $\langle u,v \rangle$ which is an $(n,k.k)$ Hypergeometric distribution, there exists some $q=q_T=e^{\Theta((\log n)^T)}$ for which there exists $r(T)$ with $\pi(\{\rho_{\mathrm{id}}(u,v) \geq r(T)\})=q^{-2}$.  
       
       Let us then fix $q=q_T.$ Notice that if we choose $\delta:= \log (kq^2)=\Theta( (\log n)^{T+1}),$ then we have for large enough $n,$ by \cite[Lemma 6.6] {bandeira2022franz} $\pi^{\otimes 2}(\langle u,  v \rangle \geq  \delta)\leq k (\frac{k^2}{n})^{\delta} \leq k 2^{-\delta}=1/q^2$. Hence, $\delta_0 \leq \delta$. Combining the above with the second part of Lemma \ref{lem:mSLR},

       \begin{align}
       \EE_{u,v \sim \pi} (\langle L^{\otimes m}_u,L^{\otimes m}_v \rangle_{\QQ}  \ind(\langle u,  v \rangle \leq \delta_0))& \leq \EE_{u,v \sim \pi}\exp{\frac{m \langle u, v \rangle^2}{(k+\sigma^2)^2-\langle u, v \rangle^2}}\ind(\langle u,  v \rangle \leq \delta)) \\
       & \leq \exp{\frac{m \delta^2}{(k+\sigma^2)^2-\delta^2}}\\
       &=\exp{ m \Theta(\frac{(\log n)^{2(T+1)}}{k^2(1+\mathrm{SNR}^{-1})^{2}})}=O(1),
       \end{align}for any $m=O(\frac{k^2(1+\mathrm{SNR}^{-1})^{2}}{(\log n)^{2(T+1)}})$. Hence, the model is $(q=q_T,m=\Theta(\frac{k^2(1+\mathrm{SNR}^{-1})^{2}}{(\log n)^{2(T+1)}}),O(1))$-$\rho_{\mathrm{id}}$-hard. Using now Theorem \ref{thm:SQ=GFP} for $m_{\mathrm{IT}}=(\log n)^{T}$, which is permissible to use using our $\chi^2$ bound and that $\mathrm{SNR} \leq e^{k^{1-\alpha}}$ for some $\alpha>0$, we conclude for all $T>1$, the $(e^{\Theta((\log n)^{T-1})},(k^2(1+\mathrm{SNR}^{-1})^{2})^{1-o(1)})$-SQ hardness. The result follows.
\end{proof}

\subsection{Proofs for Non-Gaussian Component Analysis}

\begin{proof}[Proof of Theorem \ref{thm:NGCA_uniform_GFP}]
    Let us prove that the model is $\rho_{\mathbb{Z}_2}$-FP hard and conclude using the implication in \Cref{lem:GFP=AGFP}.1. Note that $\rho_{\mathbb{Z}_2} (u,v) = \< L_u, L_{\text{sign}(\<u,v\>) v}\> - 1$, that is
    \[
    \rho_{\mathbb{Z}_2} (u,v) = \sum_{s = s^*}^\infty \nu_s^2 | \<u , v\>|^s,
    \]
    and $\rho_{\mathbb{Z}_2} (u,v)$ is an increasing function of $|\< u ,v\>|$. Thus, $\rho_{\mathbb{Z}_2}$-FP hardness is equivalent to FP hardness. Using that $\<u,v\>$ under the uniform prior is distributed as the first coordinate of $z \sim \text{Unif} (\mathcal{S}^{d-1})$, we get
    \[
    \pi ( | \< u ,v \>| \geq \kappa) \leq 2 \exp{-c n\kappa^2},
    \]
    for some universal constant $c>0$.
    For simplicity denote $\rho = |\<u,v\>|$. Using that $\nu_s^2 \leq 1$ for any $s \in \NN$ by Jensen's inequality, we can write
    \[
    \begin{aligned}
    \<L_u,L_v\> - 1 \leq  \sum_{s \geq s^*} \nu_s^2 \rho^s \leq \nu_{s^*}^2 \rho^{s^*} + \rho^{s^* +1} \sum_{s \geq s^* +1}\rho^s  \leq \rho^{s^*} \left( \nu_{s^*}^2 + \frac{\rho}{1- \rho} \right),
    \end{aligned}
    \]
    so that for $\rho = o_n(1)$ and $n$ large enough, $\<L_u,L_v\> - 1 \leq 2 \nu_{s^*}^2 \rho^{s^*}$. We deduce that for $\kappa = n^{-1/2 +\eps}$, we have
    \[
    \begin{aligned}
        \EE\left[
        \langle L_u^{\otimes m}, L_v^{\otimes m}\rangle_{\QQ}  \cdot \ind(| \< u, v\> | < \kappa) \right] \leq&~ 1 + \sum_{j =1}^m {{m}\choose{j}} \EE\left[(
        \langle L_u, L_v\rangle_{\QQ}  - 1)^j\cdot \ind(| \< u, v\> | < \kappa) \right] \\
        \leq &~ 1 + \sum_{j = 1}^m (2 m \nu_{s^*}^2 \kappa^{s^*} )^j.
    \end{aligned}
    \]
    Thus we deduce that the problem is $(\exp{\Theta (n^\eps)}, m, \Theta(1))$-GFP hard with $m = n^{s^*/2 - \Theta(\eps)} / \nu_{s^*}^2$.

    To use the equivalence with SQ,  we need to compute the $\chi^2$-divergence, that is
    \[
    \E[ \< L_u,L_v\>^{4t}] =  \E[ \< L_u,L_v\>^{4t}] = 1 + \sum_{j = 1}^{4t}{{4t}\choose{j}} \E[ (\< L_u,L_v\>-1)^{j}].
    \]
    Let us bound
    \[
    \begin{aligned}
        \E[ (\< L_u,L_v\>-1)^{j}] =&~  \E[ (\< L_u,L_v\>-1)^{j} \cdot \ind (|\rho| \leq \kappa) ] +  \E[ (\< L_u,L_v\>-1)^{j}\cdot \ind (|\rho| > \kappa)]  \\
        \leq&~ (2 \nu_{s^*}^2 \kappa^{s^*} )^j + M^j \exp {-c n^{2\eps} },
    \end{aligned}
    \]
    where we denoted $M = \| L_u \|_{\QQ} ^2 -1 = O (\exp{n^{\eps/2}})$. Thus
    \[
    \E[ (\< L_u,L_v\>-1)^{j}] = 1 + \sum_{j =1}^{4t} (8t\nu_{s^*}^2 \kappa^{s^*})^j + (4t M)^j \exp {-c n^{2\eps} } = O(1),
    \]
    where we used that $t \log (t) = \Tilde \Theta (n^{\eps/2})$ by assumption. We can therefore apply Theorem \ref{thm:SQ=GFP} with $q' = \exp{n^{\eps/2}}$ and $t = n^{\eps/2}$ (so that $t \leq \log (q)/\log(m) = \Tilde \Theta (n^{\eps})$). The model is $(q',m')$-SQ hard with
    \[
    m' = \frac{m}{(t(1+\epsilon)^{1/t}+\chi^2(\PP^{\otimes 4t} \,\|\, \QQ^{\otimes 4t}))(q')^{2/t}}= \Theta (m/t) =  m^{1-\Theta(\epsilon)}, 
    \]
    which concludes the proof.
\end{proof}

\begin{proof}[Proof of Theorem \ref{thm:NGCA_sparse_GFP}]
    The proof proceeds similarly as the proof of Theorem \ref{thm:NGCA_uniform_GFP}. The main difference is the new tail bound on $\< u,v\>$ given in  Lemma~\ref{lemma:sparse-tail}. We now set $\kappa = n^\epsilon \max (n^{-1/2} , k^{-1})$, so that
    \[
    \pi (| \< u,v\> | \geq \kappa) \leq 2 \exp{-cn^\epsilon}.
    \]
    With this modification, the rest of the proof is identical and we omit it.
\end{proof}

\begin{lemma}[Tail bound for sparse prior] \label{lemma:sparse-tail}
    Let $u, v$ be independently sampled from the prior $\pi = \text{Unif}(\{ u \in \pm\tfrac{1}{\sqrt{k}} \{0,1\}^n: \|u\|_0 = k \})$. Then for any $t \geq 0$, we have
    \begin{align}
        \pi^2(\langle u, v\rangle \geq t) \leq \exp{-c\min\{nt^2, kt\}},
    \end{align}
    for some universal constant $c>0$.
\end{lemma}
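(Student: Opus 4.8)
The plan is to reduce $\langle u,v\rangle$ to a randomly stopped Rademacher sum and then combine a Hoeffding estimate for the signs with a Chernoff estimate for the size of the support overlap, all through a single moment‑generating‑function computation. Writing $u_i=(\sigma^u_i/\sqrt{k})\,\ind(i\in S_u)$ and $v_i=(\sigma^v_i/\sqrt{k})\,\ind(i\in S_v)$, where the supports $S_u,S_v\subseteq[n]$ are uniform over $k$-subsets and the sign vectors $\sigma^u,\sigma^v$ uniform over $\{\pm1\}^{S_u},\{\pm1\}^{S_v}$, all mutually independent, one has
\[
\langle u,v\rangle=\frac1k\sum_{i\in S_u\cap S_v}\sigma^u_i\sigma^v_i\;\overset{(d)}{=}\;\frac1k\sum_{i=1}^{L}\xi_i,\qquad L:=|S_u\cap S_v|,
\]
where, conditionally on $L$, the $\xi_i$ are i.i.d.\ Rademacher and $L\sim\mathrm{Hypergeometric}(n,k,k)$ with $\EE[L]=k^2/n$. (We may assume $t\le1$, since $\langle u,v\rangle\le\|u\|_2\|v\|_2=1$ almost surely.)

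Next I would bound the MGF. For $\lambda>0$, conditioning on $L$ and using $\cosh(x)\le e^{x^2/2}$ gives $\EE[e^{\lambda\langle u,v\rangle}\mid L]=\cosh(\lambda/k)^{L}\le e^{L\lambda^2/(2k^2)}$, hence $\EE[e^{\lambda\langle u,v\rangle}]\le\EE[e^{sL}]$ with $s:=\lambda^2/(2k^2)$. Since $L$ counts successes in $k$ draws without replacement from a population with $k$ successes out of $n$, a standard comparison bounds its MGF by that of a $\mathrm{Binomial}(k,k/n)$ variable, so $\EE[e^{sL}]\le(1+(k/n)(e^s-1))^{k}\le e^{(k^2/n)(e^s-1)}$. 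Markov's inequality then yields, for every $\lambda>0$,
\[
\pi^2(\langle u,v\rangle\ge t)\;\le\;\exp{-\lambda t+\frac{k^2}{n}\bigl(e^{\lambda^2/(2k^2)}-1\bigr)}.
\]

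It remains to choose $\lambda$, splitting according to whether $t\le k/n$ or $t\ge k/n$ (equivalently, on which of $nt^2$, $kt$ is the smaller). If $t\le k/n$, I would take $\lambda=nt/2$; then $\lambda^2/(2k^2)=n^2t^2/(8k^2)\le1$, so $e^{\lambda^2/(2k^2)}-1\le\lambda^2/k^2$ and the exponent is at most $-nt^2/2+nt^2/4=-nt^2/4$. If $t\ge k/n$, I would take $\lambda=k/2$; then, using $k^2/n\le kt$, the exponent is at most $-kt/2+kt(e^{1/8}-1)=-(\tfrac12-(e^{1/8}-1))\,kt$ with $\tfrac12-(e^{1/8}-1)>0$. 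Together these give $\pi^2(\langle u,v\rangle\ge t)\le\exp{-c\min(nt^2,kt)}$ for a universal $c>0$, as claimed.

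The computation is routine; the only step needing care is the last one, where the Chernoff parameter must be tuned differently in the two regimes so that the sub‑Gaussian bound $e^{-cnt^2}$ governs the small‑overlap regime while the rare‑event hypergeometric tail $e^{-ckt}$ governs the large‑overlap regime. A more hands‑on alternative would avoid the MGF of $L$: condition on $L$, apply Hoeffding to get $\pi^2(\langle u,v\rangle\ge t\mid L)\le e^{-k^2t^2/(2L)}\ind(L\ge kt)$, bound the hypergeometric tail by $\pi^2(L\ge\ell)\le\binom{k}{\ell}(k/n)^\ell$ (as in \cite[Lemma~6.6]{bandeira2022franz}), and sum over $\ell\ge kt$ after splitting at $\ell\asymp k^2/n$; there the bookkeeping of that split would be the main obstacle.
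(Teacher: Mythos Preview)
Your proof is correct. The paper states this lemma without proof, so there is no reference argument to compare against; your MGF approach via conditioning on the overlap $L=|S_u\cap S_v|$, applying $\cosh(x)\le e^{x^2/2}$ fiberwise, invoking the Hoeffding hypergeometric--binomial MGF domination, and then tuning $\lambda$ separately in the regimes $t\lessgtr k/n$ is a clean and standard route that yields the claimed bound with an explicit constant (roughly $c=\min\{1/4,\,1/2-(e^{1/8}-1)\}$).

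One minor cosmetic point: in the first regime you actually have $\lambda^2/(2k^2)=n^2t^2/(8k^2)\le 1/8$, not merely $\le 1$; this does not affect the argument since you only use $e^x-1\le 2x$ for $x\in[0,1]$, but tightening it would let you quote a slightly better constant. The alternative conditioning-then-summing approach you sketch at the end also works and is essentially the route suggested by the paper's repeated use of \cite[Lemma~6.6]{bandeira2022franz} for hypergeometric tails elsewhere.
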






\subsection{Proofs for Convex Truncation}

\begin{proof}[Proof of Theorem \ref{lemma: fp-CT}]
Observe that for $L_u:=L_{K_u}$ we have via standard Hermite expansion (identical to the argument in \cite[Line (32), proof of Claim 24]{de2023testing}), \begin{align}\label{eq:hermite_trunc}\langle L_u, L_v \rangle_{\QQ} =\frac{\QQ(K_u \cap K_v)}{(1-\alpha)^2} =1+(1-\alpha)^{-2}\langle u, v \rangle^2 \left[\sum_{i=1}^{\infty} f^2_{2i} \langle u, v \rangle^{2(i-1)}\right],\end{align}
where $f_i$ is the $i$-th Hermite weight of $\ind(x \in [-\kappa,\kappa]), x \in \RR$  for $\kappa$ such that $\Phi(\kappa)=1-\alpha/2$ where $\Phi$ is the CDF of a standard Gaussian.

Now, conveniently, the authors \cite{hsu2022near} have already studied the Hermite mass of indicators of symmetric intervals around $0$. Indeed, applying \cite[Lemma 27]{hsu2022near} for $j=2,\theta=k$ imply that \[f^2_2=O(\kappa \phi(\kappa)^2),\]where $\phi$ is the PDF of a standard Gaussian. But observe that by standard tail bounds $\kappa=O(\sqrt{\log(1/\alpha)})$ and from the Mill's ratio bound $\phi(\kappa)=\Theta((1-\Phi(\kappa))\kappa)$. Combining the above we conclude \[f^2_2=O\left(\alpha^2 \log(1/\alpha)^{3/2}\right).\]  Parseval's identity gives $\sum_{i>0} f^2_i= \alpha(1-\alpha) \leq \alpha,$ and hence for some constant $C>0$
\[\langle L_u, L_v \rangle_{\QQ} \leq 1+C\left((1-\alpha)^{-2}\langle u, v \rangle^2 \left(\alpha^2 \log(1/\alpha)^{3/2}+\langle u, v \rangle^2 \alpha\right)\right).\]

Now, notice that from \eqref{eq:hermite_trunc}, $\langle L_u, L_v \rangle_{\QQ}$ is an increasing function of $\langle u, v \rangle^2$. Hence, for any $q>0$ there exists $\delta_0(q)>0$ such that $\{\rho_{\mathrm{id}}(u,v) \geq r(q)\}=\{\langle u,v \rangle^2 \geq \delta_0(q)\}.$
From Hoeffding's inequality we have that for some constant $C'>0$ if $\delta=C'\frac{ \log q}{n}$ then $\pi^2(\langle u, v \rangle^2  \geq \delta) \leq q^{-2}.$ Hence $\delta_0(q) \leq \delta=C'\frac{ \log q}{n}.$

Combining the above we have that for any $q=e^{o(\alpha n)},$
\begin{align}
\mathbb{E}[\langle L_u, L_v \rangle_{\QQ}^m \ind(\langle u,v \rangle^2 \leq \delta_0)] &\leq \left[1+C\left((1-\alpha)^{-2}\delta_0 (\alpha^2 \log(1/\alpha)^{3/2}+\delta_0 \alpha)\right)\right]^m\\
&\leq \left[1+C\left((1-\alpha)^{-2}C'\frac{ \log q}{n} (\alpha^2 \log(1/\alpha)^{3/2}+C'\frac{ \log q}{n} \alpha)\right)\right]^m\\
& \leq \left[1+2C\left(C'\frac{ \log q}{n(1-\alpha)^{2}} (\alpha^2 \log(1/\alpha)^{3/2})\right)\right]^m\\
&=O(1),
\end{align}as long as $m=O(d/(\alpha^2\log(1/\alpha)^{3/2} \log q)).$
So we conclude the $(q,\Theta(n/(\alpha^2\log(1/\alpha)^{3/2} \log q)))$-$\rho_{\mathrm{id}}$-FP-hard for any $q=e^{o(\alpha n)}.$

Now via an identical proof to \cite[Theorem 23]{de2023testing} we have for any $m=o(n/\alpha)$ that $$\chi^2(\mathbb{P}^{\otimes m},\mathbb{Q}^{\otimes m})=O(1).$$ In particular, for any constant $T>0$, $$\chi^2(\mathbb{P}^{\otimes (\log n)^{T}},\mathbb{Q}^{\otimes (\log n)^{T}})=O(1).$$Finally, notice that again since $\langle L_u, L_v \rangle_{\QQ}$ is a strictly increasing function of $\langle u, v \rangle^2$, and $\langle u, v \rangle$ is a sum of iid Rademacher random variables we conclude via standard Central Limit Theorem arguments that for any $T>0$ there exists $q=q(T)=e^{\Theta((\log n)^{T+1})}$ for which for some $r'=r'(T),r=r(T)>0$ it holds that $\pi^2(\langle u, v \rangle^2 \geq r)=\pi^2(\rho_{\mathrm{id}}(u,v) \geq r')=q^{-2}$.

Hence, for any $T>0$ we can apply our equivalence Theorem \ref{thm:SQ=GFP} for $m_{\mathrm{IT}}=(\log n)^{T}$, $q=q(T+1)$ (so $\log q=\Theta((\log n)^{T+1})$), and appropriate $t=\Theta((\log n)^{T})$ to conclude the $(e^{\Theta((\log n)^{T}}),\Theta(\frac{n}{\alpha^2 \log(1/\alpha)^{3/2}(\log n)^{2T+1}}))$-SQ hardness of the task.
\end{proof}

\section{Details on the GFP-hardness and FP-hardness separation}
\label{app:counterexample}

Below, we provide details on the counterexample described in Section \ref{sec:count}. 


\begin{proof}[Proof of Lemma \ref{claim: counterexample-ratio}]
       By definition, for any $u \in \{0,1\}^{n+1}$,
       \[
    \begin{aligned}
        L_u(x) &= \prod_{i=0}^n \Big( \vone(x_i = 1) \cdot \frac{1/2 + r \cdot \frac{1-(1-\alpha) \cdot u_i}{2}}{1/2} + \vone(x_i = -1) \frac{1/2 - r \cdot \frac{1-(1-\alpha) \cdot u_i}{2}}{1/2} \Big) \\
        & = \prod_{i=0}^n \big(1 + rx_i \cdot [1 - (1-\alpha) \cdot u_i ]\big).
    \end{aligned}
    \]
    For any $u,v \in \{0,1\}^{n+1}$, the inner product $\langle L_u, L_v \rangle$ satisfies
    \[
    \begin{aligned}
        \langle L_u, L_v \rangle & = \EE_{x\sim\QQ}\Big[ \prod_{i=0}^n  \big(1 + rx_i \cdot [1 - (1-\alpha) \cdot u_i ]\big) \big(1 + rx_i \cdot [1 - (1-\alpha) \cdot v_i ]\big) \Big] \\
         & = \prod_{i=0}^n \underset{x_i\sim\rad\left(1/2\right) }{\EE}  \big(1 + rx_i \cdot [1 - (1-\alpha) \cdot u_i ]\big) \big(1 + rx_i \cdot [1 - (1-\alpha) \cdot v_i ]\big) \\
        & = \prod_{i=0}^n  \big(1+  r^2 \cdot (1 - (1-\alpha) \cdot u_i ) (1 - (1-\alpha) \cdot v_i )\big)
    \end{aligned}
    \]
    Denote $a_i = 1+ r^2 \cdot (1 - (1-\alpha) \cdot u_i )  (1 - (1-\alpha) \cdot v_i)$. When $u_i = v_i=0$, we have
    $a_i = 1 + r^2$; when $u_i = v_i = 1$, $a_i = 1 + r^2 \cdot \alpha^2 $; when there is exactly one $1$ and one $0$ in $u_i,v_i$, we get $a_i = 1+  r^2 \cdot \alpha$. We deduce that $a_i = 1 + r^2 \cdot \alpha^{u_i+v_i}$ and the lemma follows.
\end{proof}

\begin{proof}[Proof of Theorem \ref{thm:counterexample}]
    Let us first show it is FP easy. Define $\delta := \delta(n^{-1/2})$ the supremum over $\delta$ such that $\pi^2(\langle u,v\rangle \geq \delta ) \geq 1/n$. We observe when $u \neq v$, then we must have $\langle u, v \rangle = 0 < \delta$ by the choice of the two points prior with $\<u_1,u_2\> = 0$. Therefore,
    \begin{align}
        \pi^2(u \neq v) = 2\rho(1-\rho) \leq 2 e^{-n^{\epsilon}/2} \ll n^{-1} \leq \pi^2(\langle u,v\rangle \geq \delta).
    \end{align}
    We deduce the following lower bound  
    \begin{align}
         \EE_{u,v}[\left\langle L_u^{\otimes m}, L_v^{\otimes m} \right\rangle \cdot \vone(\langle u,v \rangle < \delta ) ] & \geq \EE_{u,v}[\left\langle L_u^{\otimes m}, L_v^{\otimes m} \right\rangle \cdot \vone(u\neq v )] \\
        & = \pi^2[u \neq v] \cdot \EE_{u,v}[\left\langle L_u^{\otimes m}, L_v^{\otimes m} \right\rangle \big| u\neq v]
    \end{align}
    When conditioned on $u \neq v$, we get
    \begin{align}
        \EE_{u,v}[\left\langle L_u^{\otimes m}, L_v^{\otimes m} \right\rangle \big| u \neq v ] = (1 + \alpha r^2 )^{(n+1)m},
    \end{align}
   by applying Eq.~\eqref{eq: counterexample-m-sample}, with $u_i + v_i = 1$ for all $0\leq i \leq n$. Inserting the parameters stated in the lemma, we obtain
    \begin{align}
         \EE_{u,v}[\left\langle L_u^{\otimes m}, L_v^{\otimes m} \right\rangle \cdot \vone(\langle u,v \rangle < \delta ) ] \geq&~ 2\rho(1-\rho) \cdot (1 + \alpha r^2 )^{(n+1)m} \\
         \geq &~ \exp{-\tfrac{1}{2}n^\epsilon} \cdot \left( 1 + n^{-2+2\epsilon} \right)^{(n+1)n^{1-\epsilon}} \\
         =&~ \Omega(1) \cdot \exp{-\tfrac{1}{2}n^\epsilon} \cdot \exp{n^{\epsilon}} 
        \geq \Omega(1) \cdot \exp{\tfrac{1}{2}n^{\epsilon}}.
    \end{align}
    This shows that under our parameter choice, the task is $(n^{-1/2}, m,\exp{\Theta (n^{\epsilon} )})$-FP easy.

    Let us now show that this model is GFP hard. We will prove that the model is $\rho_{\mathrm{Id}}$-FP hard and conclude using the implication \Cref{lem:GFP=AGFP}.1. Under the trivial group, we have $\rho_{\mathrm{Id}}(u,v) = \langle L_u, L_v\rangle_{\QQ} - 1$. From Eq.~\eqref{eq: counterexample-m-sample}, the $m$-sample inner product of likelihood ratio is given for $ u = v = u_1$ by
    \begin{align}
        \left\langle L_{u_1}^{\otimes m}, L_{u_1}^{\otimes m} \right\rangle  = (1+\alpha^2r^2)^m \cdot (1+r^2)^{mn}
    \end{align}
    and for $u = v = u_2$ by
    \begin{align} 
         \left\langle L_{u_2}^{\otimes m}, L_{u_2}^{\otimes m} \right\rangle  =  (1+\alpha^2r^2)^{nm} \cdot (1+r^2)^m.
    \end{align}
    Because $\alpha \ll 1$, it is not hard to notice 
    \begin{equation} \label{eq: counterexample-order}
    \begin{aligned}
         \left\langle L_{u_1}^{\otimes m}, L_{u_2}^{\otimes m} \right\rangle <&~ \left\langle L_{u_2}^{\otimes m}, L_{u_2}^{\otimes m} \right\rangle 
         <\left\langle L_{u_1}^{\otimes m}, L_{u_1}^{\otimes m} \right\rangle.
         \end{aligned}
    \end{equation}
    From the definition of $\pi$, it holds that
    \begin{align}
        \pi^2(\{u = u_2, v = u_2\}\cup\{ u \neq v\}) =1 -  e^{-n^{\epsilon}} \geq 1 - q^{-2},
    \end{align}
    using that we set $q = \exp{D/2}$. Combining with Eq.~\eqref{eq: counterexample-order}, we conclude that the event $\{\rho(u,v) \leq r(q)\}
    \subset \{ u = u_2, v = u_2 \}\cup \{ u \neq v\}$. This allows us to estimate the upper bound  as
    \begin{equation}
    \begin{aligned}
         \EE[\langle L_u^{\otimes m}, L_v^{\otimes m} \rangle \cdot \vone(r\leq r(q))] \leq &~\EE[\langle L_u^{\otimes m}, L_v^{\otimes m} \rangle \cdot \vone(\{u=u_2,v=u_2\} \cup \{ u \neq v\})] \\
         \leq&~ (1+\alpha^2r^2)^{nm} \cdot (1+r^2)^m.
    \end{aligned}
    \end{equation}
    Inserting our choice of parameters, we obtain 
    \begin{equation}
    \begin{aligned}
        \EE[\langle L_u^{\otimes m}, L_v^{\otimes m} \rangle \cdot \vone(r\leq r(q))] \leq&~ \left( 1 + n^{-3+4\epsilon} \right)^{n^{2-\epsilon}} \cdot \left( 1 + n^{-1} \right)^{n^{1-\epsilon}} \\
        \leq&~ \exp{n^{-1+3\epsilon} + n^{-\epsilon} } \leq 1 + 2n^{-\epsilon}.
    \end{aligned}
    \end{equation}
    Thus, the model is $(e^{D/2}, m, \Theta(n^{-\epsilon}))$-$\rho_{\mathrm{Id}}$-FP hard, and therefore $(e^{D/2}, m, \Theta(n^{-\epsilon}))$-GFP hard.

    Finally, let us use the SQ-GFP equivalence in Theorem \ref{thm:SQ=GFP} to show that the model is also SQ hard, with parameters $q' = e^{n^{\eps/2}}$ and $t = n^{\eps/2}$ (where indeed $t \leq \log (q)/\log (m) = \Tilde \Theta (n^\eps)$). To apply the theorem, we need to compute the $\chi^2$-divergence. Denoting $X = \langle L_u^{\otimes 4t}, L_v^{\otimes 4t} \rangle$ with $t = n^{\eps/2}$, 
    \begin{align}
         \chi^2(\PP^{\otimes 4t}, \QQ^{\otimes 4t}) + 1
        = &~ \pi^2(u=u_1,v=u_1) \cdot \EE[X |u=u_1,v=u_1] + \pi^2(u\neq v) \cdot \EE[X |u\neq v] \\
        &~ + \pi^2(u=u_2,v=u_2) \cdot \EE[X |u=u_2,v=u_2]  \\
        = &~ (1-\rho)^2 \cdot (1+\alpha^2r^2)^{4nt} \cdot (1+r^2)^{4t} + \rho^2 \cdot (1+\alpha^2r^2)^{4t} \cdot (1+r^2)^{4nt}  \\
        &~+ 2\rho(1-\rho) \cdot (1+\alpha^2r^2)^{(n+1)4t} \\
        \leq &~ (1+n^{-3+4\epsilon})^{n^{1+\eps}} \cdot (1+n^{-1})^{n^{\epsilon}} \\
        &~+ e^{-n^{\epsilon}} \cdot (1+n^{-3+4\epsilon})^{n^{\epsilon}} \cdot (1+n^{-1})^{n^{1+\eps/2}} + 2n^{-\epsilon} \cdot (1+n^{-2+3\epsilon})^{2n^{1+\eps}}  \\
        \leq &~ 1 + 4n^{-1+\epsilon}.
    \end{align}
Thus, we obtain
\[
m' = \frac{m}{(t(1+\epsilon)^{1/t}+\chi^2(\PP^{\otimes 4t} \,\|\, \QQ^{\otimes 4t}))(q')^{2/t}}= m^{1-\Theta(\epsilon)},
\]
and we deduce the model is $(e^{D/2},m^{1-\Theta(\epsilon)})$-SQ hard.

\end{proof}






\end{document}